\newcommand{\bA}{{\mathbb{A}}}
\newcommand{\bC}{{\mathbb{C}}}
\newcommand{\bF}{{\mathbb{F}}}
\newcommand{\bN}{{\mathbb{N}}}
\newcommand{\bQ}{{\mathbb{Q}}}
\newcommand{\bR}{{\mathbb{R}}}
\newcommand{\bZ}{{\mathbb{Z}}}
\newcommand{\Ba}{{\mathbf{a}}}
\newcommand{\Bx}{{\mathbf{x}}}
  \newcommand{\B}{{\mathcal{B}}}
  \newcommand{\D}{{\mathcal{D}}}
  \newcommand{\F}{{\mathcal{F}}}
\renewcommand{\L}{{\mathcal{L}}}
  \newcommand{\N}{{\mathcal{N}}}
  \newcommand{\Q}{{\mathcal{Q}}}
  \newcommand{\R}{{\mathcal{R}}}
\renewcommand{\S}{{\mathcal{S}}}
  \newcommand{\U}{{\mathcal{U}}}
\newcommand{\Beu}{\operatorname{Beu}}
\newcommand{\ep}{\varepsilon}
\newcommand{\ol}{\overline}
\newcommand{\upchi}{{\raise.35ex\hbox{$\chi$}}}
\newcommand{\bup}{\boldsymbol \upsilon} 
\newcommand{\Gal}{\operatorname{Gal}}
\newcommand{\fd}{\mathfrak{d}}
\newtheorem{theorem}{Theorem}[section]
\newtheorem{proposition}[theorem]{Proposition}
\newtheorem{lemma}[theorem]{Lemma}
\theoremstyle{definition}
\numberwithin{equation}{section}
\begin{document}

\title{Square-free values of decomposable forms}
\author{Stanley Yao Xiao}
\address{Mathematical Institute, University of Oxford, Oxford, UK}
\subjclass[2010]{Primary 11B05}%
\keywords{Power-free values, polynomials, number theory}%
\date{\today}


\begin{abstract} In this paper we prove that decomposable forms, or homogeneous polynomials $F(x_1, \cdots, x_n)$ with integer coefficients which split completely into linear factors over $\mathbb{C}$, take on infinitely many square-free values subject to simple necessary conditions and $\deg f \leq 2n + 2$ for all irreducible factors $f$ of $F$. This work generalizes a theorem of Greaves.
\end{abstract}

\maketitle


\section{Introduction}

In this paper, we consider the density of integer tuples $(x_1, \cdots, x_n)$ satisfying $|x_i| \leq B$ and for which $F(x_1, \cdots, x_n)$ is square-free, where $F$ is an $n$-ary \emph{decomposable form} of degree $d > n$. A homogeneous polynomial $F$ is said to be a decomposable form if it splits into linear factors over the algebraic closure of its field of definition. If $F$ has rational coefficients and is irreducible over $\bQ$, we say that $F$ is an \emph{incomplete norm form}. Before stating our result, we shall give a brief summary of work done on square-free values of polynomials to date. \\

For a polynomial $g(x)$ with integer coefficients, define the counting function
\[N_g(B) = \# \{x \in \bZ : |x| \leq B, g(x) \text{ is square-free}\}.\]
Estermann \cite{Est} showed that when $g(x) = x^2 + 1$, there exists a positive number $c_g$ such that the asymptotic formula
\begin{equation} \label{1 var} N_g(B) = c_g B + O(B^{2/3} \log B)\end{equation}
holds. We will say that a polynomial $g$ has \emph{no fixed square divisor} if for all primes $p$ there exists $n_p \in \bZ$ such that $p^2 \nmid g(n_p)$. Ricci \cite{Ric} generalized Estermann's work and showed that for any irreducible quadratic polynomial with no fixed square divisor, there exists a positive number $c_g$ such that (\ref{1 var}) holds. Erd\H{o}s showed that 
\[\lim_{B \rightarrow \infty} N_g(B) = \infty\]
in \cite{E} for cubic polynomials with no fixed square divisor. Hooley \cite{Hoo} refined the work of Estermann, Ricci, and Erd\H{o}s and showed that for all cubic polynomials $g$ with no fixed square divisor, there exists a positive number $c_g$ such that (\ref{1 var}) holds with a worse error term. Helfgott further refined Hooley's work in \cite{Hel} by showing that an analogous asymptotic formula to (\ref{1 var}) holds when we replace integer inputs with prime inputs. To date, it is not known whether (\ref{1 var}) holds unconditionally for any polynomial $g$ with no fixed square divisor with $\deg g \geq 4$. \\

Assuming the $abc$-conjecture, Granville and Poonen proved respectively in \cite{Gran} and \cite{Poo} that polynomials in a single variable and polynomials in multiple variables take on infinitely many square-free values. We note that Poonen's result does not allow one to deduce an analogous asymptotic formula to (\ref{1 var}). Bhargava, Shankar, and Wang recently showed the existence of an asymptotic formula for square-free values of \emph{discriminant polynomials}, which does not use the $abc$-conjecture in \cite{BSW}. \\

A natural generalization from the case of single-variable polynomials is to binary forms. Greaves made a breakthrough in \cite{Gre} on the problem of square-free values of binary forms for suitable binary forms $F(x,y)$ with integer coefficients with no fixed square divisor. He showed that the density of integer pairs $(x,y)$ such that $F(x,y)$ is square-free is exactly as expected provided that $d' \leq 6$, where $d'$ is the largest degree of an irreducible factor of $F$. One observes that the requirement $d' \leq 6$ can be compared to $d \leq 3$ in the single variable case.  Hooley, in \cite{Hoo1} and \cite{Hoo2}, extended Greaves's results to the case when $F$ is a polynomial in two variables which splits into linear factors over $\bC$. \\ 

Schmidt, in \cite{Sch}, introduced an invariant which he called the discriminant for (incomplete) \emph{norm forms} which we define below. Write
 \begin{equation} \label{norm factor} F(\Bx) = \prod_{j=1}^d L_j(\Bx),\end{equation}
where the $L_j$'s are conjugates of the linear form
\[L_1(\Bx) = \omega_1 x_1 + \omega_2 x_2 + \cdots + \omega_n x_n\]
with algebraic integer coefficients in a number field $K$. We then put
\begin{equation} \label{discriminant} \Delta(F) = \prod_{\{i_1, \cdots, i_n\} \subset \{1, \cdots, d\}} \lvert \det(L_{i_1}, \cdots, L_{i_n})\rvert, \end{equation}
where the determinant of $n$ linear forms in $x_1, \cdots, x_n$ refers to the determinant of its coefficients. It is easy to check that $\Delta(F)$ is invariant under any action of the Galois group $\Gal(\ol{\bQ}/\bQ)$, and since each term that appears in the product is an algebraic integer, it follows that $\Delta(F)$ is a rational integer. We say that $F$ has \emph{bad reduction} at a prime $p$ if $F$ has a repeated linear factor over $\bF_p$. One notes that bad reduction can only occur if $p | \Delta(F)$. Therefore, if $\Delta(F)$ is non-zero, then bad reduction can only occur at finitely many primes. \\ 

In this paper, we extend Greaves's work in \cite{Gre} and Hooley's work in \cite{Hoo1} and \cite{Hoo2} by generalizing Greaves's geometry of numbers method for $n$-ary \emph{decomposable forms} and adapting Hooley's sieve arguments. \\ 

For an integer $k$ and an integer $m$, we say that $m$ is $k$-free if for all primes $p$ dividing $m$, we have $p^k \nmid m$. For a set $S$, we write $\# S$ for the cardinality of $S$. Let us write, for an $n$-ary form $F$ with integer coefficients,
\begin{equation} \label{rho} \rho_F(m) = \# \{(a_1, \cdots, a_n) \in (\bZ/m\bZ)^n : F(a_1, \cdots, a_n) \equiv 0 \pmod{m}\}\end{equation}
and for a positive number $B$ and an integer $k \geq 2$,
\begin{equation} \label{count} N_{F,k}(B) = \# \{(x_1, \cdots, x_n) \in \bZ^n : |x_i| \leq B, F(x_1, \cdots, x_n) \text{ is } k\text{-free}\}.\end{equation}
We will prove the following theorem:
\begin{theorem} \label{MT1} Write $\Bx = (x_1, \cdots, x_n)$ and let 
\[F(\Bx) = L_1(\Bx) \cdots L_r(\Bx)\] 
be a \emph{decomposable} form with integer coefficients and non-zero discriminant $\Delta(F)$ as given in (\ref{discriminant}), where $L_1, \cdots, L_r$ are linear forms with algebraic integral coefficients in some finite extension $K/\bQ$. Let $d$ be the maximal degree of a $\bQ$-irreducible factor of $F$. Let $k \geq 2$ be an integer with the property that for all primes $p$, there exists a vector $\Bx^{(p)} = \left(x_1^{(p)}, \cdots, x_n^{(p)}\right) \in \bZ^n$ such that $p^k \nmid F\left(\Bx^{(p)}\right)$. Then the asymptotic relation 
\[N_{F,k}(B) \sim B^n \prod_p \left(1 - \frac{\rho_F(p^k)}{p^{nk}} \right) \]
holds whenever 
\begin{equation} \label{k-d rel}k \geq \frac{d-2}{n}.\end{equation}
\end{theorem}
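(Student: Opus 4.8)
The plan is to establish the asymptotic formula by a sieve argument, splitting the count of non-$k$-free values $F(\Bx)$ according to the size of the prime $p$ whose $k$-th power divides $F(\Bx)$. Write $B \to \infty$ and consider the complementary count
\[
B^n \prod_p\left(1 - \frac{\rho_F(p^k)}{p^{nk}}\right) - N_{F,k}(B) = \#\{\Bx : |x_i| \le B,\ p^k \mid F(\Bx) \text{ for some } p\} + (\text{main-term error}),
\]
so it suffices to show the number of $\Bx$ in the box with $F(\Bx)$ divisible by $p^k$ for some prime $p$ is $o(B^n)$. I would partition the primes into three ranges by thresholds $\xi_1 = B^{\alpha_1}$ and $\xi_2 = B^{\alpha_2}$ (with $0 < \alpha_1 < \alpha_2$ to be chosen). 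For the small primes $p \le \xi_1$, standard counting of lattice points in the box lying on the variety $F \equiv 0 \pmod{p^k}$ gives a contribution $\sim B^n \sum_{p \le \xi_1} \rho_F(p^k)/p^{nk}$ with an error term controlled by the number of residue classes times a boundary term; inclusion–exclusion then matches this against the truncated Euler product, and the tail $\sum_{p > \xi_1} \rho_F(p^k)/p^{nk}$ is $o(1)$ since $\rho_F(p^k) = O(p^{(n-1)k + 1})$ for primes of good reduction (using $\Delta(F) \ne 0$), so the series converges.

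The substantive work is the range of large primes $p > \xi_1$. Here the key observation is that if $p^k \mid F(\Bx) = L_1(\Bx)\cdots L_r(\Bx)$, then, grouping the $L_j$ into Galois orbits corresponding to the $\bQ$-irreducible factors, a large prime-power $p^k$ dividing the norm $f(\Bx) = \prod_{j \in \text{orbit}} L_j(\Bx)$ of degree $\le d$ forces (after accounting for the degree of the relevant prime ideals above $p$) the vector $\Bx$ to lie on one of a bounded number of sublattices of $\bZ^n$ of index roughly $p^{k}$ — this is Schmidt's geometry-of-numbers picture for norm forms, and it is exactly where $\Delta(F) \ne 0$ and the structure of decomposable forms enter. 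For the medium range $\xi_1 < p \le \xi_2$, one counts lattice points in the box on such a sublattice: the count is $O\big(B^n / p^{k} + (\text{terms with fewer powers of } B)\big)$, and summing a geometric-type series over $p > \xi_1$ gives $O(B^n / \xi_1^{k-1})$ plus lower-order terms, which is $o(B^n)$ provided $k \ge 2$ — here I would be careful that the successive-minima terms $B^{n-1}(p^k)^{1/?}$, etc., also sum to $o(B^n)$, which is where the quantitative hypothesis on the box versus $p$ matters.

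The genuinely hard range, and the main obstacle, is the top range $p > \xi_2$: here $p^k$ can exceed $B^n$ and the naive lattice-point bound $B^n/p^k$ is useless because the "box meets the sublattice in $O(1)$ points" regime kicks in and one is really counting solutions to $f(\Bx) = p^k m$ with $f$ of degree up to $d$ and $\Bx$ small — equivalently, representing a highly divisible integer by the norm form. This is precisely the place where Greaves's and Hooley's more delicate arguments are needed: one does a further dyadic decomposition and bounds, for each dyadic block $p \sim P$, the number of $\Bx$ in the box for which some $p \sim P$ has $p^k \mid f(\Bx)$, by relating it to counting representations of $f(\Bx)$ (of size $\ll B^d$) with a $k$-th power divisor of size $\gg P^k$; the number of such $\Bx$ is bounded by (size of admissible lattice translates) $\times$ (number of divisor choices), and the whole sum over $P > \xi_2$ is shown to be $o(B^n)$ exactly when $k \ge (d-2)/n$, i.e.\ when $nk + 2 \ge d$. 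I expect the constraint (\ref{k-d rel}) to emerge from balancing the exponent: the box has $\sim B^n$ points, $f(\Bx) \ll B^d$, and demanding a $k$-free-obstruction at scale $P$ near $B^{d/k}$ costs a factor that is absorbed precisely when $B^{d} \le B^{nk+2}$ up to logarithms — so the inequality $d \le nk+2$ is the natural breaking point of the method, and verifying that the error genuinely beats $B^n$ under this hypothesis (and not merely under $d \le nk + n$, say) is the crux. The strategy throughout is to adapt Hooley's sieve from \cite{Hoo1, Hoo2} for the top range and Greaves's geometry-of-numbers from \cite{Gre} for the medium range, now carried out for general $n$ using Schmidt's discriminant $\Delta(F)$ to control the geometry of the sublattices.
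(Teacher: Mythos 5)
Your plan reproduces the paper's skeleton --- a three-range decomposition of the primes, inclusion--exclusion for the small range, geometry of numbers on sublattices for the medium range, and a correct identification of where $k \ge (d-2)/n$ enters (namely, for $p > \xi_2$ the cofactor $u = F(\Bx)/p^k$ is forced down to size $O(B^{d-nk}) = O(B^2)$, which is the scale at which Hooley's two-variable machinery applies). But the top range, which you yourself flag as the crux, is where your proposal has a genuine gap: bounding the count by ``(size of admissible lattice translates) $\times$ (number of divisor choices)'' is not a workable argument. The difficulty is that for $p > \xi_2$ one is counting $\Bx$ with $F(\Bx) = u q^k$, $q$ prime and $u \le B^2(\log B)^{-2k/3}$, and a divisor-style bound summed over $u$ diverges by a power of $\log B$. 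The paper closes this by (i) an Ekedahl-sieve step (Section \ref{Ekedahl}) removing the $\Bx$ that reduce into the codimension-two locus where two linear factors vanish simultaneously, and then (ii) a Selberg sieve (Section \ref{Selberg}): one introduces an auxiliary squarefree modulus $\D(u_2)$ built from primes $\equiv 1 \pmod k$ up to $\frac{1}{12}\log(B^2u_2^{-1})$, observes that $q^k$ is constrained to $k^{\omega(\D)}$ out of roughly $\D$ residue classes (Lemma \ref{big D lem}), majorizes the box by Beurling--Selberg functions, applies Poisson summation, and controls the dual sum via the exponential-sum estimate of Lemma \ref{S lemma} (nonzero only when $(t_1,\dots,t_n)$ is proportional mod $p$ to a coefficient vector of a linear factor, hence lies on few lattices) together with Hooley's bounds on $\sum C^{\omega(F_2(t_2,-t_1))}$. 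The factor $k^{-\omega(\D)}$ is precisely the saving that makes the sum over cofactors converge with a $(\log B)^{-\delta}$ to spare; nothing in your sketch produces it.

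Two further points would derail the plan as written. First, the thresholds cannot both be powers of $B$: the paper must take $\xi_1 \ll \log_2 B/\log_3 B$ because the Selberg-sieve step loses a factor $\exp(c\,\xi_1)$ from the small-prime part $u_1 \mid C(\xi_1)$ (Lemma \ref{N4B}), and the simple treatment of $N_1(B)$ already requires $\prod_{p\le\xi_1}p = O(B^\ep)$; meanwhile $\xi_2$ must be pushed all the way to $B^n(\log B)^{2/3}$ to make the cofactor small, and carrying the medium-range lattice count up to such large $p$ is exactly why Lemma \ref{Main Lem} needs the nontrivial case analysis on the shortest vector $M_\Lambda$ (the sums $S_1,S_2,S_3$), not just the leading term $B^n/p^2$. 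Second, two technical claims are off: the medium-range solutions to $F(\Bx)\equiv 0 \pmod{p^2}$ lie on boundedly many lattices of determinant at least $p^2$ (not index $\approx p^k$; the paper deliberately sieves only by $p^2$ there), and your stated bound $\rho_F(p^k) = O(p^{(n-1)k+1})$ is too weak even to make the singular product converge when $k=2$ --- the correct bound is $\rho_F(p^k) = O(p^{k(n-1)} + p^{n(k-1)})$ as in Lemma \ref{rho lemma}.
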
 
In particular, if $k = 2$, then $F$ takes on infinitely many square-free values as long as $d \leq 2n+2$. This recovers the theorem of Greaves in \cite{Gre}. We further remark that J.~Maynard, in \cite{May}, used methods from geometry of numbers related to the methods in Section \ref{Geom}, to prove an analogous theorem to Theorem \ref{MT1} for primes represented by incomplete norm forms. \\

The outline of our paper is as follows. In Section \ref{Prelim} we will use an elementary sieve argument to partition the relevant main terms and error terms to be estimated in order to prove Theorem \ref{MT1}. In Section \ref{Geom}, we will generalize Greaves's geometry of numbers argument in \cite{Gre} to the case of decomposable forms over $\bZ$. In Sections \ref{Ekedahl} and \ref{Selberg}, we adapt the Ekedahl Sieve as described in \cite{BhaSha} and \cite{Eke} and the Selberg sieve, as expressed by Hooley in \cite{Hoo1}, to establish an estimate for the remaining error terms relevant to condition (\ref{k-d rel}) of Theorem \ref{MT1}. 

\subsection*{Funding} This work was supported by the Government of Ontario; University of Waterloo; and the Natural Sciences and Engineering Research Council of Canada.  

\subsection*{Acknowledgments} The author would like to thank C.L. Stewart and D. Tweedle for their comments on the paper and discussions. 

\section{Preliminaries}
\label{Prelim}

We will show that $N_{F,k}(B)$ (recall (\ref{count})) satisfies an inequality of the form
\begin{equation} \label{simple sieve} N_1(B) - N_2(B) - N_3(B) \leq N_{F,k}(B) \leq N_1(B).
\end{equation} 
Our goal will be to demonstrate that for any $\ep > 0$, that
\[N_1(B) = B^n \prod_{p \leq \xi_1} \left(1 - \frac{\rho_F(p^k)}{p^{nk}}\right) + O_{F,\ep} \left(B^{n - 1 + \ep}\right),\]
and for some $\delta_n > 0$ and some slowly growing function $\xi_1 = \xi_1(B)$ tending to infinity as the parameter $B$ tends to infinity, that
\[N_2(B) = O_F\left(B^n \left(\xi_1^{-1} + (\log B)^{-\delta_n} \right) \right)\]
and that
\[N_3(B) = o_F(B^n).\]
Put $\log_1 (B) = \max\{1, \log B\}$ and $\log_s B = \log_1 \log_{s-1} B$ for $s \geq 2$. We now write
\begin{equation} \label{xi1} \xi_1 = \xi_1(B), \end{equation}
to be an eventually increasing real-valued function tending to infinity which we shall define later. For now, it suffices to suppose that $\xi_1(B) = O(\log_2 B/\log_3 B)$. Next put
\begin{equation} \label{xi2} \xi_2 = B^n (\log B)^{2/3}.\end{equation}
Now define
\begin{equation} \label{M1} N_1(B) = \# \{\Bx \in \bZ^n : |x_i| \leq B, \text{if } p^k | F(\Bx), \text{then } p > \xi_1 \},\end{equation}
\begin{equation} \label{M2} N_2(B) = \# \{\Bx \in \bZ^n : |x_i| \leq B, \text{there exists } p \in (\xi_1, \xi_2] \text{ s.t. } p^2 | F(\Bx), \text{and} \end{equation}
\[\text{if } p^k | F(\Bx), \text{then } p > \xi_1\},\]
and
\begin{equation} \label{M3} N_3(B) = \# \{\Bx \in \bZ^n : |x_i| \leq B, \text{there exists } p > \xi_2 \text{ s.t. } p^k | F(\Bx), F(\Bx) \text{ is indivisible by }  \end{equation}
\[p^2 \text{ for } \xi_1 < p \leq \xi_2 \text{ and } \text{if } p^k | F(\Bx), \text{then } p > \xi_1\}.\]
Before we proceed with estimating $N_1(B)$, let us establish some facts about the function $\rho_F$ as defined in (\ref{rho}). For a positive integer $m$ and a real number $\alpha$, let us write
\[\sigma_{\alpha}(m) = \sum_{s | m} s^{\alpha}.\]
Furthermore, for each prime $p$ we define
\begin{equation} \label{tau} \tau_F(p) = \# \text{ geometrically irreducible components of } F \text{ defined over } \bF_p, \end{equation}
and for square-free integers we define
\[\tau_F(m) = \prod_{p | m} \tau_F(p).\]
We remark that in our case, the only geometrically irreducible components are hyperplanes which are defined over $\bF_p$. \\

We will establish the following lemma:
\begin{lemma} \label{rho lemma} Let $\rho_F$ be defined as in $(\ref{rho})$. Then $\rho_F$ is multiplicative and for all primes $p$ we have
\[\rho_F(p^k) = O_{d,n} \left(p^{k(n-1)} + p^{n(k-1)}\right).\]
If $m$ is a square-free integer, then
\[\rho_F(m) = O_F(m^{n-1} \tau_F(m) \sigma_{-1/4}(m)).\]
\end{lemma}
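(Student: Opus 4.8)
The plan is to establish the three assertions in turn. Multiplicativity of $\rho_F$ is immediate from the Chinese Remainder Theorem: if $m = m_1 m_2$ with $\gcd(m_1,m_2)=1$, then the reduction map $(\bZ/m\bZ)^n \to (\bZ/m_1\bZ)^n \times (\bZ/m_2\bZ)^n$ is a bijection carrying the solution set of $F \equiv 0 \pmod m$ onto the product of the solution sets modulo $m_1$ and $m_2$. So I will state this in one line.

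For the bound on $\rho_F(p^k)$, I would argue as follows. Count solutions $\Bx \bmod p^k$ to $F(\Bx) \equiv 0 \pmod{p^k}$ by splitting according to whether or not $p \mid \Bx$ (meaning $p$ divides every coordinate). If $p \mid \Bx$, there are $p^{(k-1)n}$ choices of such $\Bx$ in $(\bZ/p^k\bZ)^n$ (write $\Bx = p\By$ with $\By$ free mod $p^{k-1}$), giving the $p^{n(k-1)}$ term. If $p \nmid \Bx$, I want to show the number of such solutions is $O_{d,n}(p^{k(n-1)})$; this is where one uses that $F$ is a product of linear forms and $\Delta(F) \neq 0$. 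I would reduce to counting on each of the (at most $d$, up to $\GL_n$-action / Galois orbits) families of parallel hyperplanes: for a primitive linear form $L$ over $\bZ$ (or the norm-form product of a Galois orbit), the condition $p^k \mid F(\Bx)$ forces $p^j \mid N L(\Bx)$ for some $j$, and on a fixed coset of the hyperplane $\{L \equiv 0 \pmod{p^i}\}$ one has roughly $p^{k(n-1)}$ lifts. Summing a geometric-type series over the possible valuations contributes only an $O_{d,n}(1)$ factor, and the number of irreducible factors is $O_d(1)$, so the total is $O_{d,n}(p^{k(n-1)})$. Combining the two cases gives $\rho_F(p^k) = O_{d,n}(p^{k(n-1)} + p^{n(k-1)})$.

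For the square-free bound $\rho_F(m) = O_F(m^{n-1}\tau_F(m)\sigma_{-1/4}(m))$, by multiplicativity it suffices to treat $m = p$. Over $\bF_p$, $F$ factors as a product of (powers of) the irreducible-over-$\bF_p$ linear-or-norm components; there are $\tau_F(p)$ geometrically irreducible components defined over $\bF_p$ (hyperplanes, as remarked), plus possibly components with bad reduction. A point with $F(\Bx) \equiv 0 \pmod p$ lies on one of these hypersurfaces. Each hyperplane defined over $\bF_p$ contributes $p^{n-1}$ points, giving the $p^{n-1}\tau_F(p)$ contribution. The extra $\sigma_{-1/4}(p) = 1 + p^{-1/4}$ factor is harmless (it is $\geq 1$), but it is there to absorb, at the finitely many bad primes $p \mid \Delta(F)$, the solutions lying on lower-dimensional intersection loci or coming from repeated/conjugate factors — at such primes one crudely bounds $\rho_F(p) = O_F(p^{n-1})$ with the implied constant depending on $F$, and since there are only finitely many such primes this is absorbed into the $O_F$. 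For the generic primes, the point-count on a hyperplane over $\bF_p$ and on intersections of distinct hyperplanes (dimension $\leq n-2$, so $O_d(p^{n-2})$ points, which is $\ll p^{n-1}p^{-1/4}$ for $p$ large) gives the stated shape.

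The main obstacle is the $p\nmid\Bx$ case of the $\rho_F(p^k)$ estimate: one must control solutions of $F(\Bx)\equiv 0\pmod{p^k}$ uniformly in $p$, including primes of bad reduction where linear factors collapse mod $p$, and organize the count over the various hyperplane-cosets and valuation patterns so that the total is genuinely $O_{d,n}(p^{k(n-1)})$ rather than something larger. Handling the interaction of several linear forms that become proportional mod $p$ (so that $F$ acquires a high-multiplicity factor) without losing a power of $p$ requires using $\Delta(F)\neq 0$ to bound the $p$-adic valuations of the relevant determinants $\det(L_{i_1},\dots,L_{i_n})$ independently of $p$ for all but finitely many $p$, and bounding things $F$-dependently at the rest.
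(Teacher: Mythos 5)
Your plan matches the paper's proof for the first two assertions: multiplicativity via CRT, and the split of the $\rho_F(p^k)$ count according to whether $p$ divides all coordinates (giving $p^{n(k-1)}$) or not. For the $p\nmid\Bx$ case the paper is much terser than you are: it simply fixes $x_2,\dots,x_n$ in at most $p^{k(n-1)}$ ways and asserts that there are then at most $d$ choices for $x_1$, whereas you propose the honest valuation-by-valuation count over hyperplane cosets. Your version is closer to what is actually needed (the paper's ``at most $d$'' step tacitly uses $p\nmid\Delta(F)$ and still misses the tuples where several linear forms are simultaneously divisible by lower powers of $p$), but note that your ``geometric-type series over the possible valuations'' is really a sum over compositions of $k$ into at most $n-1$ parts, so the factor it contributes is polynomial in $k$ rather than $O_{d,n}(1)$; this is harmless because $k$ is fixed throughout the paper, but you should say so rather than claim an absolute constant. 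Where you genuinely diverge from the paper is the square-free bound: you count directly on the $\tau_F(p)$ hyperplanes defined over $\bF_p$ and their codimension-$\ge 2$ intersections, absorbing bad primes into the $O_F$ constant, while the paper invokes the Lang--Weil estimate to get $\rho_F(p)=\tau_F(p)p^{n-1}+O_F(p^{n-3/2})$ and then multiplies over $p\mid m$, converting the product of $\bigl(\tau_F(p)+O_F(p^{-1/2})\bigr)$ into $O_F(\tau_F(m)\sigma_{-1/4}(m))$. For a decomposable form the components really are hyperplanes, so your elementary count is perfectly adequate and more self-contained; Lang--Weil buys nothing extra here beyond a clean citation, though it is the tool the paper reuses later for the congruences modulo $\D$. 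Either route yields the stated bound.
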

\begin{proof} The fact that $\rho_F$ is multiplicative follows from the Chinese Remainder Theorem. For the upper bound, let us first suppose that there exists an index, say $i = 1$, such that $p \nmid x_1$. Then there are at most $p^k$ many choices for $x_2, \cdots, x_n$. Having fixed these, there are then at most $d$ choices for $x_1$. Hence, there are at most $ndp^{(n-1)k}$ choices for $(x_1, \cdots, x_n)$. Otherwise, suppose that $p | x_i$ for $i = 1, \cdots, n$. Write $x_i = p x_i'$ for $i = 1, \cdots, n$. Then there are at most $p^{k-1}$ choices for each $i = 1, \cdots, n$, whence there are $p^{n(k-1)}$ choices altogether. Combining these, we obtain the claimed upper bound. \\ \\
For the second part, we use a result of Lang-Weil in \cite{LW}, which asserts that for any algebraic variety $V$ defined over $\bQ$ and any prime $p$, we have
\begin{equation} \label{Lang-Weil} \# V(\bF_p) = C_V(p) p^{\dim V} + O_V \left(p^{\dim V - 1/2}\right), \end{equation} 
where $C_V(p)$ is the number of geometrically irreducible, top-dimensional components of $V$ which are defined over $\bF_p$. We then have
\[\rho_F(p) = \tau_F(p) p^{n-1} + O_F(p^{n-3/2}).\]
Multiplicativity of $\rho_F$ then yields 
\begin{align*} \rho_F(m) & =  \prod_{p | m} \left(\tau_F(p) p^{n-1} + O_F(p^{n-3/2})\right) \\
& = m^{n-1} \prod_{p | m} \left(\tau_F(p)  + O_F(p^{-1/2})\right) \\ 
& = O_F(m^{n-1}\tau_F(m) \sigma_{-1/4}(m)).\end{align*} \end{proof} 
We remark that Lemma \ref{rho lemma} implies that the infinite product
\[\prod_p \left(1 - \frac{\rho_F(p^k)}{p^{nk}} \right)\]
converges. This is because 
\[\frac{\rho_F(p^k)}{p^{nk}} = O \left(\frac{1}{p^k} + \frac{1}{p^n}\right) = O \left(\frac{1}{p^2}\right),\]
since $k, n \geq 2$ by assumption. \\

We give an estimate for $N_1(B)$. Define, for a positive integer $b$, the quantity
\[N(b,B) = \# \{\Bx \in \bZ^n \cap [-B,B]^n : b^k | F(\Bx)\}.\]
Then from the familiar property of the Mobius function $\mu$, we have
\begin{align*} N_1(B) & = \sum_{\substack{ b \in \bN \\ p | b \Rightarrow p \leq \xi_1}} \mu(b) N(b,B) \\
& = \sum_{\substack{ b \in \bN \\ p | b \Rightarrow p \leq \xi_1}} \mu(b) \rho_F(b^k) \left(\frac{B^n}{b^{nk}} + O\left(\frac{B^{n-1}}{b^{(n-1)k}} + 1\right) \right) \\
& = B^n \prod_{p \leq \xi_1} \left(1 - \frac{\rho_F(p^k)}{p^{nk}}\right) + O\left(\sum_{\substack{ b \in \bN \\ p | b \Rightarrow p \leq \xi_1}} \rho_F(b^k) \left(\frac{B^{n-1}}{b^{(n-1)k}} + 1 \right) \right).
\end{align*}
By the theorem of Rosser and Schoenfeld \cite{RS}, it follows that for all $\ep > 0$ and some $C' > 0$ we have
\[\prod_{p \leq \xi_1} p \leq e^{2 \xi_1} = O \left((\log B)^{\frac{C'}{\log_3 B}} \right) = O_\ep (B^\ep),\]
by (\ref{xi1}). Hence, we obtain via Lemma \ref{rho lemma} that, for any $\ep > 0$,
\[N_1(B) = B^n \prod_{p \leq \xi_1} \left(1 - \frac{\rho_F(p^k)}{p^{nk}}\right) + O\left(\sum_{b \ll_\ep B^\ep} B^{n-1+\ep} + b^{n(k-1)+\ep} + b^{k(n-1)+\ep} \right).\]
We then see that
\begin{equation} \label{N1B main estimate} N_1(B) = B^n \prod_{p \leq \xi_1} \left(1 - \frac{\rho_F(p^k)}{p^{nk}}\right) +  O_{\ep} \left(B^{n-1 + \ep}  \right).\end{equation}
As $B \rightarrow \infty$, the partial product in (\ref{N1B main estimate}) tends to the convergent product in Theorem \ref{MT1}, thus it suffices to show that $N_2(B), N_3(B)$ are error terms. \\ 

In the next section we will see that we can obtain good estimates for $N_2(B)$ even when $\xi_2$ is as large as $B^n (\log B)^{2/3}$. Let 
\[F(x_1, \cdots, x_n) = \F_1(\Bx) \cdots \F_r (\Bx),\]
where each $\F_i$ is irreducible over $\bQ$ for $i = 1, \cdots, r$. Here $d = \max_{1 \leq j \leq r} \deg \F_j$. Let us write 
\[N_2^{(j)}(B) = \# \{\Bx \in \bZ^n : |x_i| \leq B, \text{there exists } p \in (\xi_1, \xi_2] \text{ s.t. } p^k | \F_j(\Bx), \text{ and}\]
\[\text{if } p^k | \F_j(\Bx), \text{then } p > \xi_1\},\]
and
\[N_3^{(j)}(B) = \# \{\Bx \in \bZ^n : |x_i| \leq B, \text{there exists } p > \xi_2 \text{ s.t. } p^k | \F_j(\Bx),  \]
\[p^2 \nmid F_j(\Bx) \text{ for } \xi_1 < p \leq \xi_2, \text{ and if } p^k | \F_j(\Bx), \text{then } p > \xi_2\}.\]
If $\Bx$ is counted by $N_2(B)$ (respectively $N_3(B)$) but not by $N_2^{(j)}(B)$ (respectively $N_3^{(j)}(B)$) for $j =1 ,\cdots, r$, then there must exist $j_1 < j_2$ and a positive integer $k' < k$ such that 
\[\F_{j_1}(\Bx) \equiv 0 \pmod{p^{k'}} \text{ and } \F_{j_2}(\Bx) \equiv 0 \pmod{p^{k-k'}}.\]
However, this can only happen if $p | \Delta(F)$, so this situation can be avoided if $B$ is chosen sufficiently large. Hence, we have
\[N_2(B) \leq \sum_{j=1}^r N_2^{(j)}(B)\]
and
\[N_3(B) \leq \sum_{j=1}^r N_3^{(j)}(B).\]
It therefore suffices to deal with the case when $F$ is irreducible over $\bQ$ and $d = \deg F$. 

\section{Geometry of Numbers}
\label{Geom}

In this section we shall give an estimate for $N_2(B)$. To do so, we show that for each modulus $m$ we can reduce the problem to counting integer points of bounded height in a finite number $\N_F$ of lattices, the important feature being that $\N_F$ is dependent only on $F$. 

\begin{lemma} \label{lattice lemma 0} Let $F \in \bZ[x_1, \cdots, x_n]$ be an incomplete norm form of degree $d > n$. Let $p \nmid \Delta(F)$ be a prime, and let $\Ba = (a_1, \cdots, a_n) \in \bZ^n$ be a solution to the congruence
\[F(\Bx) \equiv 0 \pmod{p^2}.\]
Then $\Ba$ lies on a finite number $\N_F$ of lattices $\Lambda \subset \bZ^n$. Moreover, for each such lattice $\Lambda$, we have $\det \Lambda \geq p^2$. 
\end{lemma}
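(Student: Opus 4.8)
The plan is to analyze the congruence $F(\Bx) \equiv 0 \pmod{p^2}$ locally at $p$ using the factorization $F = L_1 \cdots L_d$ into linear forms with algebraic integer coefficients in $\mathcal{O}_K$. First I would pick a prime $\mathfrak{p}$ of $\mathcal{O}_K$ lying over $p$; since $p \nmid \Delta(F)$, good reduction holds, so the linear forms $L_1, \ldots, L_d$ reduce mod $\mathfrak{p}$ to $d$ pairwise non-proportional linear forms over the residue field $\mathcal{O}_K/\mathfrak{p}$. Now if $\Ba$ satisfies $F(\Ba) \equiv 0 \pmod{p^2}$, then $\prod_j L_j(\Ba) \in p^2 \mathbb{Z} \subseteq \mathfrak{p}^2 \mathcal{O}_K$ (more precisely the $\mathfrak{p}$-adic valuation of $F(\Ba)$ is at least $2 e$ where $e$ is the ramification index, and since $p \nmid \Delta(F)$ we may assume $p$ is unramified, so $v_{\mathfrak{p}}(F(\Ba)) \geq 2$). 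Because good reduction forces at most one of the $L_j(\Ba)$ to be divisible by $\mathfrak{p}$ (two such would make the reduced forms proportional mod $\mathfrak{p}$ on the common zero, contradicting distinctness unless $p \mid \Delta(F)$), we conclude there is a \emph{single} index $j = j(\Ba)$ with $v_{\mathfrak{p}}(L_j(\Ba)) \geq 2$, i.e.\ $L_j(\Ba) \equiv 0 \pmod{\mathfrak{p}^2}$.

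Next I would convert the single congruence $L_j(\Ba) \equiv 0 \pmod{\mathfrak{p}^2}$ into membership in a sublattice of $\mathbb{Z}^n$. The condition $L_j(\Bx) \equiv 0 \pmod{\mathfrak{p}^2}$ cuts out a subgroup $\Lambda_j \subseteq \mathbb{Z}^n$; concretely, writing $L_j(\Bx) = \sum_i \omega_i^{(j)} x_i$ with $\omega_i^{(j)} \in \mathcal{O}_K$, this is the set of $\Bx \in \mathbb{Z}^n$ with $\sum_i \omega_i^{(j)} x_i \in \mathfrak{p}^2$. Since $\mathcal{O}_K/\mathfrak{p}^2$ has order $p^{2f}$ (with $f$ the residue degree) and the image of $\mathbb{Z}^n$ under $\Bx \mapsto L_j(\Bx) \bmod \mathfrak{p}^2$ is a $\mathbb{Z}$-submodule of $\mathcal{O}_K/\mathfrak{p}^2$ whose order is a power of $p$ at least $p^2$ — here one uses that not all $\omega_i^{(j)}$ lie in $\mathfrak{p}$ (else $p \mid \Delta(F)$ again, since $L_j$ would reduce to zero) so the image is nonzero and, as a $\mathbb{Z}$-module generated by classes in an abelian group of exponent dividing $p^2$, contains a copy of $\mathbb{Z}/p^2\mathbb{Z}$ as soon as some $\omega_i^{(j)} \notin \mathfrak{p}^2$, which again follows from good reduction. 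Hence $[\mathbb{Z}^n : \Lambda_j] \geq p^2$, i.e.\ $\det \Lambda_j \geq p^2$. The number of such lattices is $\mathcal{N}_F := d$, the degree of $F$ (one lattice per linear factor), which depends only on $F$ — this is the crucial uniformity.

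Finally I would assemble: every solution $\Ba$ of $F(\Bx) \equiv 0 \pmod{p^2}$ lies in $\Lambda_{j(\Ba)}$ for some $j \in \{1, \ldots, d\}$, so $\Ba$ lies on one of the $d$ lattices $\Lambda_1, \ldots, \Lambda_d$, each of determinant at least $p^2$. I expect the main obstacle to be the bookkeeping around ramification and the residue degree: one must be careful that ``$v_{\mathfrak{p}}(F(\Ba)) \geq 2$'' really follows from $p^2 \mid F(\Ba)$ (true when $p$ is unramified in $K$, which is guaranteed for $p \nmid \Delta(F)$ since $\Delta(F)$ is divisible by the relevant ramified primes), and that pushing from $v_{\mathfrak{p}}(L_j(\Ba)) \geq 2$ in $\mathcal{O}_K$ back down to a genuine index-$\geq p^2$ sublattice of $\mathbb{Z}^n$ does not lose a factor — the point being that the local ring $\mathcal{O}_{K,\mathfrak{p}}$ is a DVR with uniformizer $p$ (unramified case) so $\mathfrak{p}^2 \cap \mathbb{Z} = p^2\mathbb{Z}$ and the quotient behaves as expected. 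A secondary subtlety is handling the case where $\Ba$ might satisfy the congruence ``trivially'' because $p \mid \gcd(a_1, \ldots, a_n)$; but then $\Ba \in p\mathbb{Z}^n$, which is itself a lattice of determinant $p^n \geq p^2$, so it can simply be included among the $\mathcal{N}_F$ lattices (adjusting $\mathcal{N}_F$ to $d + 1$ if one wishes to treat this case separately) without affecting the conclusion.
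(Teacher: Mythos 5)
There is a genuine gap at the very first step of your reduction. You claim that $p \nmid \Delta(F)$ forces \emph{at most one} of the values $L_j(\Ba)$ to be divisible by $\mathfrak{p}$, so that a single factor must vanish to order $\geq 2$. This is false for $n \geq 2$: the condition $p \nmid \Delta(F)$ only guarantees that the reduced linear forms are pairwise \emph{non-proportional} modulo $\mathfrak{p}$, and two non-proportional linear forms in $n$ variables still have a common zero locus of codimension $2$, which contains roughly $p^{n-2}$ points of $\bF_p^n$ (and always contains $\Ba \equiv \mathbf{0}$). So there are many solutions $\Ba$ with $L_{i_1}(\Ba) \equiv L_{i_2}(\Ba) \equiv 0 \pmod{\mathfrak{p}}$ for two distinct indices, neither congruence holding modulo $\mathfrak{p}^2$; such $\Ba$ satisfies $F(\Ba) \equiv 0 \pmod{p^2}$ but lies in none of your lattices $\Lambda_j = \{\Bx : L_j(\Bx) \equiv 0 \pmod{\mathfrak{p}^2}\}$. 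Your final covering therefore misses an entire class of solutions, and this class is not negligible --- it is in fact the dominant one in the later lattice-point counting.

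The paper's proof handles exactly this by a three-way case split over the factorization $F = F^\ast \prod_i \L_i$ over $\bZ_p$: (a) one $\bF_p$-rational factor vanishes mod $p^2$ (your case, giving $\leq d$ lattices of determinant $p^2$); (b) two distinct $\bF_p$-rational factors each vanish mod $p$, in which case $\Ba$ lies in the intersection of two index-$p$ lattices, and non-proportionality (i.e.\ $p \nmid \Delta(F)$) is used precisely to show this intersection has index $\geq p^2$, contributing $\leq d^2$ further lattices; and (c) a factor not defined over $\bF_p$ vanishes mod $p$, which automatically imposes $s \geq 2$ independent linear conditions over $\bF_p$ by writing $\L_j = \alpha_1 \L_{j,1} + \cdots + \alpha_s \L_{j,s}$ with $\alpha_1, \ldots, \alpha_s$ an $\bF_p$-basis of $\bF_{p^s}$. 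Your computation that the kernel of $\Bx \mapsto L_j(\Bx) \bmod \mathfrak{p}^2$ has index $\geq p^2$ is essentially sound and recovers cases (a) and (c), but to repair the argument you must add, for each pair $i_1 < i_2$ of factors, the lattice $\{\Bx : L_{i_1}(\Bx) \equiv L_{i_2}(\Bx) \equiv 0 \pmod{\mathfrak{p}}\}$ and verify its index is $\geq p^2$ using non-proportionality; this raises $\N_F$ from $d$ to roughly $d^2$, which is still bounded in terms of $F$ alone. (A secondary caveat: your assertion that $p \nmid \Delta(F)$ forces $p$ unramified in $K$ is not justified by the definition of $\Delta(F)$ in the paper; the paper sidesteps this by working directly with the factorization over $\bZ_p$ rather than with a chosen prime $\mathfrak{p}$ of $\O_K$.)
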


\begin{proof} 
By the same argument as that in Section 5 of \cite{Hoo2}, we can factor $F$ into 
\[F(\Bx) = F^\ast(\Bx)\prod_{i=1}^{\tau_F(p)} \L_i(\Bx) ,\]
where $\L_i(\Bx) = \upsilon_1^{(i)} x_1 + \upsilon_2^{(i)}x_2 + \cdots + \upsilon_n^{(i)} x_n$ are defined over $\bZ_p$, while $F^\ast(\Bx)$ is a form defined over $\bZ_p$. Suppose that $\Ba = (a_1, \cdots, a_n) \in \bZ^n$ is a solution to the congruence
\[F(\Ba) \equiv 0 \pmod{p^2}.\]
Then $\Ba$ is of one of the following types:
\begin{itemize}
\item[(a)] There exists exactly one $i, 1 \leq i \leq \tau_F(p)$ such that $\L_i(\Ba) \equiv 0 \pmod{p^2}$, while $\L_j(\Ba) \not\equiv 0 \pmod{p}$ for $j \ne i$, and $F^\ast(\Ba) \not \equiv 0 \pmod{p}$. 
\item[(b)] There exist $1 \leq i_1 < i_2 \leq \tau_F(p)$ such that 
\[\L_{i_1}(\Ba) \equiv \L_{i_2}(\Ba) \equiv 0 \pmod{p}.\]
\item[(c)] $F^\ast(\Ba) \equiv 0 \pmod{p}$. 
\end{itemize}
If $\Ba$ is of type (a), then $\Ba$ lies in the union of at most $\tau_F(p) \leq d$ lattices of determinant $p^2$. If $\Ba$ is of type (b), then there are two further sub-cases. Firstly, and more simply, there exist two indices $i_1 < i_2$ and an integer $t$ such that 
\begin{equation} \label{bad red}\L_{i_1}(\Bx) \equiv t \L_{i_2}(\Bx) \pmod{p}.\end{equation}
If (\ref{bad red}) holds, then it follows that $\Delta(F) \equiv 0 \pmod{p}$, hence $p$ divides the discriminant $\Delta(F)$ of $F$. Thus, there are only finitely many primes for which this could happen. Otherwise, $\Ba$ lies on the intersection of two distinct lattices $\Lambda_1, \Lambda_2$ of determinant $p$, defined by
\[\Lambda_1 = \{\Bx \in \bZ^n : \Bx \cdot \Ba_1 \equiv 0 \pmod{p}\}\]
and
\[\Lambda_2 = \{\Bx \in \bZ^n : \Bx \cdot \Ba_2 \equiv 0 \pmod{p}\},\]
where $\Ba_1, \Ba_2$ are two non-proportional non-zero vectors modulo $p$. Now let $\phi_1, \phi_2$ be homomorphisms from $\bZ^n$ to $\bF_p$ defined by 
\[\phi_1(\Bx) = \Ba_1 \cdot \Bx \pmod{p}\]
and
\[\phi_2(\Bx) = \Ba_2 \cdot \Bx \pmod{p}.\]
Then $\Lambda_1, \Lambda_2$ are the kernels of $\phi_1, \phi_2$ respectively. Now let $\phi$ be defined by $\phi : \bZ^n \rightarrow (\bZ/p\bZ)^2, \phi(\Bx) = (\phi_1(\Bx), \phi_2(\Bx))$. The image of $\phi$ is the full set $(\bZ/p\bZ)^2$ whenever $\Ba_1, \Ba_2$ are not proportional modulo $p$. Hence, $\Ba$ lies in a lattice of determinant at least $p^2$. Further, there are at most $\tau_F(p)^2 \leq d^2$ such lattices. \\ \\
If $\Ba$ is of type (c), then modulo $p$ there exists a linear factor $\L_j$ of $F^\ast$ which is not defined over $\bF_p$ such that $\L_j(\Ba) \equiv 0 \pmod{p}$. Let $s$ be the degree of the field of definition of $\L_j$ over $\bF_p$. By assumption, we have $s \geq 2$. Then $\L_j$ can be written as
\[\L_j = \alpha_1 \L_{j,1} + \cdots + \alpha_s \L_{j,s},\]
where $\L_{j,i}$ are linear forms with coefficients in $\bF_p$ and $\alpha_1, \cdots, \alpha_s$ is a basis of $\bF_{p^s}$ over $\bF_p$. In particular, $\alpha_1, \cdots, \alpha_s$ are linearly independent over $\bF_p$. Therefore, $\L_j(\Ba) \equiv 0 \pmod{p}$ implies that $\L_{j,i}(\Ba) \equiv 0 \pmod{p}$ for $i = 1, \cdots, s$. It thus follows that $\Ba$ lies in the intersection of the lattice in $\bZ^n$ given by the linear forms $\L_{j,1}, \L_{j,2}$, hence by the same argument it follows that $\Ba$ lies in a lattice of determinant at least $p^2$. Moreover, the number of such lattices is at most $d^2$. \end{proof}

Now we generalize Lemma 1 in \cite{Gre} (see also \cite{HB0}) for norm forms in $n \geq 2$ variables. Indeed, we will prove the following:

\begin{lemma} \label{lattice lemma} Let $\Lambda \subset \bZ^n$ be a lattice of determinant $m$. For $\Bx \in \bZ^n$ denote by $H(\Bx)$ the sup norm of $\Bx$. Put
\[N_\Lambda(B ) = \{\Bx \in \bZ^n : H(\Bx) \leq B\}\]
and put $M_\Lambda$ for the sup norm of the shortest vector in $\Lambda$. Then 
\[N_\Lambda(B) \ll_n \frac{B^n}{m} + O \left(\frac{B^{n-1}}{M_\Lambda^{n-1}} + 1 \right).\]
\end{lemma}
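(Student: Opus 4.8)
The plan is to adapt the two-variable argument of \cite[Lemma~1]{Gre} (see also \cite{HB0}) to $n$ variables via the theory of successive minima. Let $M_1 = M_\Lambda \le M_2 \le \cdots \le M_n$ be the successive minima of $\Lambda$ with respect to the cube $[-1,1]^n$, so that $M_i$ is the least $\mu > 0$ for which $\mu[-1,1]^n$ contains $i$ linearly independent vectors of $\Lambda$. Minkowski's second theorem on successive minima (applied to the cube, whose volume is $2^n$) gives $M_1 M_2 \cdots M_n \asymp_n \det\Lambda = m$.

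Next I would invoke reduction theory to produce a basis $\Bb_1,\dots,\Bb_n$ of $\Lambda$ with $H(\Bb_i) \ll_n M_i$ and, crucially, with the lower bound property that for all real $c_1,\dots,c_n$ one has
\[ H\Bigl(\sum_{i=1}^n c_i \Bb_i\Bigr) \gg_n \max_{1 \le i \le n} |c_i|\,M_i; \]
for $n=2$ this is precisely the content of \cite[Lemma~1]{Gre}, and in general one may take $\Bb_1,\dots,\Bb_n$ to be, say, an LLL-reduced basis and use the Gram--Schmidt lower bounds, losing only constants depending on $n$. Consequently, if $\Bx \in \Lambda$ is counted by $N_\Lambda(B)$, writing $\Bx = \sum_i c_i\Bb_i$ with $c_i \in \bZ$, the displayed inequality forces $|c_i| \ll_n B/M_i$ for each $i$; hence the number of admissible integer tuples $(c_1,\dots,c_n)$, and therefore $N_\Lambda(B)$ itself, is
\[ N_\Lambda(B) \ll_n \prod_{i=1}^n \Bigl(1 + \frac{B}{M_i}\Bigr). \]

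To finish, expand this product as $\sum_{S \subseteq \{1,\dots,n\}} B^{|S|}\prod_{i \in S} M_i^{-1}$. The full term $S = \{1,\dots,n\}$ is $B^n/(M_1\cdots M_n) \ll_n B^n/m$ by the first paragraph; the empty term is $1$; and for $1 \le |S| = j \le n-1$, since $M_1 \le M_i$ for all $i$, the corresponding term is at most $(B/M_1)^j \le 1 + (B/M_1)^{n-1}$. Summing the $2^n$ terms yields $N_\Lambda(B) \ll_n B^n/m + B^{n-1}/M_\Lambda^{n-1} + 1$, which is the assertion.

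I expect the main obstacle to be the production of a basis with the lower bound property $H(\sum_i c_i\Bb_i) \gg_n \max_i |c_i|M_i$: merely having a basis with $H(\Bb_i) \ll_n M_i$ does not control the coordinates $c_i$ of a short vector, so one genuinely needs a reduced basis, and the fact that one cannot in general arrange $H(\Bb_i) = M_i$ exactly (Minkowski's example) forces careful bookkeeping of the $n$-dependent constants. An alternative that sidesteps this entirely is to invoke Davenport's lemma on counting lattice points in Lipschitz-parametrizable regions, which directly gives $N_\Lambda(B) = (2B)^n/m + O_n\bigl(\sum_{i=0}^{n-1} B^i/(M_1\cdots M_i)\bigr)$; one then finishes exactly as in the previous paragraph, without even needing Minkowski's second theorem.
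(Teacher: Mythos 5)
Your proposal is correct and follows essentially the same route as the paper: both proofs fix a reduced basis of $\Lambda$, show that a point of height $\le B$ has integer coordinates $c_i$ with $|c_i| \ll_n B/M_i$ (paper: via Lemma~4.3 of \cite{B1}; you: via successive minima, Minkowski's second theorem, and LLL/Davenport, which is exactly what that cited lemma packages), and then bound $\prod_i(1 + B/M_i)$ by absorbing every intermediate term into $B^{n-1}/M_\Lambda^{n-1} + 1$ using $M_i \ge M_\Lambda$. The only cosmetic difference is that the paper works directly with the norms $H(\Bx_j)$ of the reduced basis vectors rather than with the successive minima $M_j$ themselves, and also notes $M_\Lambda \le m^{1/n}$ in passing, which is not actually used in the final estimate.
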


\begin{proof} Let $\Bx_1 = \left(x_1^{(1)}, \cdots, x_n^{(1)}\right)$ be one of the shortest vectors with respect to sup norm. Without loss of generality, we may assume that $\lvert x_1^{(0)} \rvert = M_\Lambda$. Observe that $M_\Lambda \leq m^{1/n}$. To see this, let $l = l(m)$ denote the smallest positive integer such that $(l + 1)^n > m$. Then there exist two distinct vectors $\Ba_1, \Ba_2$ such that the coordinates of both vectors are at most $l/2$ in absolute value and
\[\Ba_1 \equiv \Ba_2 \pmod{m},\]
whence their difference $\Ba_1 - \Ba_2$ lies in $\L$ and $H(\Ba_1 - \Ba_2) \leq m^{1/n}$. \\ \\
By Lemma 4.3 in \cite{B1}, there exist vectors $\Bx_2, \cdots, \Bx_n \in \L$ such that
\[m \leq \prod_{j=1}^n H(\Bx_j) \ll_n m,\]
and for all vectors $\Bx \in \L$, if we write
\[\Bx = \sum_{j=1}^n \lambda_j \Bx_j,\]
we have 
\[|\lambda_j| \ll_n \frac{H(\Bx)}{H(\Bx_j)}.\]
In particular, for a vector $\Bx$ counted by $N_{\Lambda}(B)$, we have
\[|\lambda_j| \ll_n \frac{B}{H(\Bx_j)}. \]
By observing that $H(\Bx_j) \geq M_\Lambda$ for $j = 1, \cdots, n$, we obtain the bound
\begin{align*} N_{\Lambda}(B) & \ll_n \prod_{j=1}^n \left(1 + \frac{B}{H(\Bx_j)} \right) \\ 
& \ll_n \frac{B^n}{m} + \frac{B^{n-1}}{M_\Lambda^{n-1}} + \cdots + 1.
\end{align*}
Hence we obtain the consequence of the lemma. \end{proof}

For each prime $p$, we denote by $\U_p$ the set of lattices containing the solutions to the congruence $F(\Bx) \equiv 0 \pmod{p^2}$. For each $\Lambda \in \U_p$, we say that $\Lambda$ is of type a), b), or c) if $\Lambda$ arises from a solution $\Ba$ to $F(\Bx) \equiv 0 \pmod{p^2}$ of type a), b), or c) in the proof of Lemma \ref{lattice lemma 0}. Then write $F_\Lambda$ to be equal to:
\begin{itemize}
\item[(a)] $L_i(\Bx)$, if $\Lambda$ is of type a) and $\L_i$ is the unique linear form associated to $\Lambda$;
\item[(b)] $\L_{i_1} \cdots \L_{i_s}$, where $\L_{i_1}, \cdots, \L_{i_s}$ are the linear factors of $F$ defined over $\bF_p$ which vanish on $\Lambda$ modulo $p$ when $\Lambda$ is of type b); and
\item[(c)] $F^\ast$ if $\Lambda$ is of type c). 
\end{itemize}
We now estimate $N_2(B)$ via the following lemma:

\begin{lemma} \label{Main Lem} The error term $N_2(B)$ satisfies
\[N_2(B) = O_n\left(B^n \left(\xi_1^{-1} + (\log B)^{-1/3n} \right) \right). \]
\end{lemma}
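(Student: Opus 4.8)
The plan is to bound $N_2(B)$ by summing, over primes $p \in (\xi_1, \xi_2]$, the number of $\Bx \in [-B,B]^n$ with $p^2 \mid F(\Bx)$, and to estimate each such count using the lattice decomposition from Lemma \ref{lattice lemma 0} together with the lattice-point bound of Lemma \ref{lattice lemma}. Concretely, for each $p \nmid \Delta(F)$ every solution $\Ba$ to $F(\Bx) \equiv 0 \pmod{p^2}$ lies in one of at most $\N_F = O_{d,n}(1)$ lattices $\Lambda \in \U_p$, each of determinant $\det \Lambda \geq p^2$; so by Lemma \ref{lattice lemma},
\[
\#\{\Bx \in \bZ^n : H(\Bx) \leq B,\ p^2 \mid F(\Bx)\} \ll_n \frac{B^n}{p^2} + \frac{B^{n-1}}{M_{\Lambda}^{n-1}} + 1,
\]
summed over the $O(1)$ lattices. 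I would then split the range of $p$ at some threshold, say $p \leq B$ versus $p > B$, because the behaviour of $M_\Lambda$ (the shortest vector) differs: for the "small'' primes the main contribution is $B^n/p^2$ summed over $p > \xi_1$, which is $O(B^n/\xi_1)$, giving the $\xi_1^{-1}$ term; for the "large'' primes $p$ up to $\xi_2 = B^n(\log B)^{2/3}$, the term $B^n/p^2$ is still summable and contributes $O(B^n/\xi_1)$, but the secondary terms $B^{n-1}/M_\Lambda^{n-1}$ and the $+1$ must be controlled more carefully.

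The delicate part is the secondary term $B^{n-1}/M_\Lambda^{n-1}$ and the $+1$ term over the long range $\xi_1 < p \leq \xi_2$. For a fixed lattice $\Lambda$ the bound $M_\Lambda \leq (\det\Lambda)^{1/n}$ is too weak on its own (it would give $B^{n-1}/p^{2(n-1)/n}$, not obviously summable with the right power). The standard remedy, following Greaves and Hooley, is to observe that $M_\Lambda$ small forces the lattice $\Lambda$ — equivalently the normal vector(s) $\Ba_1$ (and $\Ba_2$) defining it modulo $p$ — to be constrained, and to count pairs $(\Bx, p)$ by first fixing the short vector. That is: if $\Lambda$ has a short vector of sup-norm $M$, then $\Bx$ ranges over $O((B/M)^{n-1} \cdot M) = O(B^{n-1}/M^{n-2})$-many points once one records that $\Bx$ and the short vector are linearly dependent modulo $p$ in a low-dimensional sublattice, and the number of primes $p$ for which a given direction is "special'' is bounded via the divisor-type estimates analogous to those in Lemma \ref{rho lemma}. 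The cleanest packaging is to sum $\sum_{\xi_1 < p \leq \xi_2} (B^{n-1}/M_\Lambda(p)^{n-1} + 1)$ by interchanging the order of summation: group primes according to the value of the shortest vector of the associated lattice, use that a short vector $\Bv$ with $H(\Bv) = M$ can be the shortest vector of an $F$-relevant lattice modulo $p$ only when $p \mid F_\Lambda(\Bv)$ or $p$ divides an associated bounded-size determinant, and bound the number of such $p$ by $O_F(M^{\ep})$ or a logarithmic factor. Summing over $\Bv$ with $H(\Bv) \leq B$ then yields a bound of size $O_n(B^n(\log B)^{-1/3n})$ after optimizing; the exponent $1/3n$ emerges from balancing against $\xi_2 = B^n(\log B)^{2/3}$ in the $B^n/p^2$ tail and the loss in the secondary sum.

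Thus the key steps, in order, are: (1) write $N_2(B) \leq \sum_{\xi_1 < p \leq \xi_2} \#\{\Bx : H(\Bx) \leq B,\ p^2 \mid F(\Bx)\}$, discarding the finitely many $p \mid \Delta(F)$; (2) apply Lemma \ref{lattice lemma 0} to replace the congruence count by a sum over $O_{d,n}(1)$ lattices of determinant $\geq p^2$; (3) apply Lemma \ref{lattice lemma} to each lattice, producing main term $B^n/p^2$, secondary term $B^{n-1}/M_\Lambda^{n-1}$, and the constant $1$; (4) sum the main term over $p > \xi_1$ to get $O(B^n \xi_1^{-1})$; (5) handle the secondary and constant terms by reversing the order of summation (sum over short vectors $\Bv$, then over the bounded set of primes $p$ for which $\Bv$ is forced short), using divisor bounds to control the number of such $p$, and optimize to obtain $O_n(B^n (\log B)^{-1/3n})$. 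I expect step (5) — controlling $\sum_p B^{n-1}/M_\Lambda(p)^{n-1}$ uniformly over the long range up to $\xi_2$ — to be the main obstacle, since it is exactly where Greaves's original geometry-of-numbers argument must be adapted from binary forms to $n$ variables and where the precise power of $\log B$ is extracted.
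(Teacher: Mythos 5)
Your proposal follows essentially the same route as the paper: decompose the congruence count into $O_F(1)$ lattices of determinant $\geq p^2$ via Lemma \ref{lattice lemma 0}, apply Lemma \ref{lattice lemma}, sum the main term $B^n/p^2$ to get the $\xi_1^{-1}$ contribution, and handle the secondary term $B^{n-1}/M_\Lambda^{n-1}$ by splitting according to the size of $p$ and of $M_\Lambda$ and, in the hard range $p>B$ with $M_\Lambda$ small, reversing the order of summation over short vectors and using that only $O_d(1)$ primes $p>B$ can satisfy $p^2\mid F_\Lambda(\Bv)$ for a fixed $\Bv$ of height $\leq B$. This is exactly the paper's decomposition into $S_1$, $S_2$, $S_3$ (with the threshold $M_\Lambda \geq B(\log B)^{-1/3n}$ supplying the exponent $1/3n$ you anticipated), so the approach is correct and matches the original.
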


\begin{proof} Let $\U_p$ denote the set of at most $\N_F$ many lattices $\Lambda$, each with determinant at least $p^2$ by Lemma \ref{lattice lemma 0}, which contains all of the solutions to $F(\Bx) \equiv 0 \pmod{p^2}$. Then
\[N_2(B) \ll_n \sum_{\xi_1 < p \leq B^n (\log B)^{2/3}} \sum_{\Lambda \in \U_p} N_{\Lambda} (B). \]
By Lemma \ref{lattice lemma}, it follows that
\[N_2(B) \ll_n \sum_{\xi_1 < p \leq B^n (\log B)^{2/3}} \sum_{\Lambda \in \U_p} \left(\frac{B^n}{p^2} + \frac{B^{n-1}}{M_\Lambda^{n-1}} + 1 \right). \]
We first consider consider the term
\begin{equation} \label{M3B main error}\sum_{\xi_1 < p \leq \xi_2} \sum_{\Lambda \in \U_p} \frac{B^n}{p^2}.\end{equation}
The sum
\[\sum_{p> \xi_1} \sum_{1 \leq j \leq \N_F} \frac{1}{p^2}\]
converges and is bounded by $O_F\left(\xi_1^{-1}\right)$. Now we look at the sum
\[\sum_{\xi_1 < p \leq \xi_2} \sum_{\Lambda \in \U_p}  \frac{B^{n-1}}{M_\Lambda^{n-1}}. \]
We break the above sum into three sub-sums $S_1, S_2,$ and $S_3$. $S_1$ will consist of the contribution from those primes $\xi_1 < p \leq B$. In this case, we have
\begin{align*}S_1 & = \sum_{\xi_1 < p \leq B} \sum_{\Lambda \in \U_p} \frac{B^{n-1}}{M_\Lambda^{n-1}} \\
& \ll B^{n-1} \sum_{1 \leq j \leq \N_F} \sum_{p \leq B} 1 \\
& \ll \frac{B^n}{\log B},
\end{align*}
where we used the trivial estimate that $M_\Lambda \geq 1$. \\ \\
$S_2$ will be the sub-sum consisting of those $M_\Lambda \geq B (\log B)^{-1/3n}.$ In this case, we have
\begin{align*} S_2 & \ll_d \sum_{\xi_1 < p \leq B^n (\log B)^{2/3}} \sum_{\Lambda \in \U_p} \frac{B^{n-1}(\log B)^{(n-1)/3n}}{B^{n-1}} \\
& \ll_d (\log B)^{\frac{(n-1)}{3n}} \frac{B^n (\log B)^{2/3}}{\log B} \\
& \ll_d B^n (\log B)^{-1/3n}.
\end{align*} 
Finally, $S_3$ will denote the sub-sum consisting of those primes $p > B$ and $M_\Lambda \leq B(\log B)^{-1/3n}.$ We then have
\begin{align*} S_3 & \ll \sum_{0 < |x_1^{(1)}|, \cdots, |x_n^{(1)}| \leq B(\log B)^{-1/3n}} \sum_{M_\Lambda \in \U_p} \sum_{\substack{p^2 | F_\Lambda(\Bx_1) \\ p > B}} \frac{B^{n-1}}{M_\Lambda^{n-1}} \\
& \ll B^{n-1} \sum_{0 < \lvert x_1^{(1)} \rvert \leq B(\log B)^{-1/3n}} \frac{1}{\lvert x_1^{(1)}\rvert^{n-1}} \sum_{0 \leq \lvert x_2^{(1)} \rvert, \cdots, \lvert x_n^{(1)} \rvert \leq \lvert x_1^{(1)} \rvert} \sum_{\substack{p^2 | F(\Bx_1) \\ p > B}} 1 \\
& \ll B^{n-1} B (\log B)^{-1/3n},
\end{align*} 
the last inequality following form the fact that at most $\lfloor d/2 \rfloor + 1$ many primes with $p > B$ can satisfy $p^2 | F(\Bx_1)$, since $\lVert \Bx_1 \rVert \leq B$. \\ \\
Finally, the last term needing to be estimated is
\[\sum_{\xi_1 < p \leq B^n (\log B)^{2/3}} \sum_{\Lambda \in \U_p} 1.\]
This is bounded by the number of primes in the interval $[\xi_1, B^n (\log B)^{2/3}]$, which by the prime number theorem is $O(B^n (\log B)^{2/3}/\log B) = O(B^n (\log B)^{-1/3})$, and so constitutes a negligible error term. 
\end{proof} 

\section{The Ekedahl sieve}
\label{Ekedahl}

In this section, we use the following result of Ekedahl in \cite{Eke} to handle certain contributions to $N_3(B)$. The version below was formulated by Bhargava and Shankar in \cite{BhaSha}: 

\begin{proposition}[Ekedahl sieve] \label{Ekedahl sieve} Let $\B$ be a compact region in $\bR^n$ having finite measure, and let $Y$ be any closed subscheme of $\bA_\bZ^n$ of co-dimension $s \geq 2$. Let $r$ and $M$ be positive real numbers. Then we have 
\[\#\{\Bx \in r \B \cap \bZ^n : \Bx \pmod{p} \in Y(\bF_p) \text{ for some prime } p > M\} \]
\[ = O \left(\frac{r^n}{M^{s-1} \log M} + r^{n - s + 1} \right).\]
\end{proposition}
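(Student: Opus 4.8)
The plan is to split the count according to the size of the prime $p$ relative to $r$: for $M < p \le r$ we are doing a lattice-point count weighted by $\#Y(\bF_p)$, whereas for $p > r$ a genuinely Diophantine argument is forced upon us, since each residue class modulo such a $p$ contains only $O(1)$ lattice points of $r\B$. First I would record the geometric inputs. Fix generators $I(Y) = (f_1,\dots,f_k)$ with $f_i \in \bZ[x_1,\dots,x_n]$ and put $D = \max_i \deg f_i$. Since $Y$ has codimension $\ge s$ in every fibre, $\#Y(\bF_p) = O(p^{n-s})$ for all primes $p$ by the Lang--Weil bound (see (\ref{Lang-Weil})). I would also fix, possibly after a linear change of coordinates over $\bQ$ (which distorts $\B$ and $\bZ^n$ by only a bounded amount, harmless for everything below), a subfamily $f_1,\dots,f_s \in I(Y)$ cutting out a pure-codimension-$s$ complete intersection $Z \supseteq Y$ (possible since $\operatorname{ht} I(Y) \ge s$), in coordinates generic enough that successive elimination of $x_n, x_{n-1},\dots$ is non-degenerate (leading coefficients invertible, resultants pairwise coprime at each step).

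For $M < p \le r$: the set $\{\Bx \in r\B \cap \bZ^n : \Bx \bmod p \in Y(\bF_p)\}$ lies in a union of $\#Y(\bF_p) = O(p^{n-s})$ residue classes modulo $p$, each a coset of the lattice $p\bZ^n$ (determinant $p^n$, shortest vector of sup-norm $p$); by the estimate of Lemma \ref{lattice lemma} each such coset meets $r\B$ in $O(r^n/p^n + r^{n-1}/p^{n-1} + 1)$ points, so (using $p \le r$) the per-prime count is $O(r^n/p^s + r^{n-1}/p^{s-1})$. Summing over $M < p \le r$ and invoking the prime number theorem via partial summation, so that $\sum_{p>M} p^{-s} \ll M^{-(s-1)}/\log M$ for $s \ge 2$, produces the main term $O(r^n/(M^{s-1}\log M))$; a short case analysis (depending on whether $M \le r/\log M$) shows that the residual sum $r^{n-1}\sum_{M < p \le r} p^{-(s-1)}$ is in every regime dominated by $r^n/(M^{s-1}\log M) + r^{n-s+1}$.

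For $p > r$ I claim a contribution of $O(r^{n-s+1})$, and this is the heart of the matter, because the naive per-prime bound $\sum_{p>r}\#Y(\bF_p)$ diverges. If $\Bx \in r\B \cap \bZ^n$ has $\Bx \bmod p \in Y(\bF_p) \subseteq Z(\bF_p)$ with $p > r$, then $p \mid f_i(\Bx)$ for $i = 1,\dots,s$. If $f_i(\Bx) = 0$ for all $i$, then $\Bx \in Z(\bZ) \cap r\B$, of which there are $O(r^{\dim Z}) = O(r^{n-s})$ by a standard count of integer points on a variety (via a finite projection to $\bA^{n-s}$) --- crucially this single estimate absorbs all infinitely many primes $p > r$ at once. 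Otherwise some $f_{i_0}(\Bx) \ne 0$, whence $p \le |f_{i_0}(\Bx)| \ll r^D$, confining $p$ to $r < p \ll r^D$ and to $O(1)$ possibilities for that $\Bx$. To count these $\Bx$ I would eliminate variables one at a time: with $f_1$ monic in $x_n$, the relations $p \mid f_1(\Bx), f_j(\Bx)$ force $p$ to divide $\operatorname{Res}_{x_n}(f_1, f_j) \in \bZ[x_1,\dots,x_{n-1}]$ for $j = 2,\dots,s$, and iterating $s-1$ times leaves nonzero polynomials in $x_1,\dots,x_{n-s+1}$ alone which $p$ must divide. Off the proper subvariety where some eliminant vanishes, each of the $O(r^{n-s+1})$ tuples $(x_1,\dots,x_{n-s+1})$ has $O(1)$ associated primes $p > r$, and back-substituting through the (generically monic) eliminants recovers $x_{n-s+2},\dots,x_n$ in $O(1)$ ways each (a monic eliminant leaves $O(1)$ residues modulo $p$, and $p > r$ makes each residue meet the box in $O(1)$ integers); the exceptional subvariety, together with the completions over it, is handled by a small downward induction on $n$ and is of strictly lower order. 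This gives $O(r^{n-s+1})$ for the large-prime range, and combining the two ranges proves the proposition.

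The step I expect to be the main obstacle is precisely this large-prime range, and within it the organisation of the iterated elimination in general codimension: one must arrange the rational coordinate change and the choice of the complete intersection $f_1,\dots,f_s$ so that at every stage the polynomial being eliminated keeps full degree in the relevant variable and the successive resultants remain pairwise coprime, and then bookkeep the finitely many degenerate sub-strata where these properties fail (the inductive step above). For $s = 2$ the argument collapses to a single resultant $\operatorname{Res}_{x_n}(f_1, f_2)$ and is already transparent; the real work is the passage to arbitrary $s$. The small-prime analysis, by contrast, is routine --- a lattice count plus partial summation.
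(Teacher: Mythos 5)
The paper does not prove this proposition; it is quoted as a known result of Ekedahl \cite{Eke}, in the form given by Bhargava--Shankar \cite{BhaSha} (a detailed proof along the lines you describe appears in Bhargava's geometric-sieve preprint \cite{Bha}). There is therefore no in-paper argument to compare against, so the remarks below assess your sketch on its own terms.

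Your outline has the right shape and is essentially the standard argument: the split at $p\approx r$; the Lang--Weil bound $\#Y(\bF_p)=O(p^{n-s})$ together with a per-coset lattice count for $M<p\le r$; and, for $p>r$, passing to a codimension-$s$ complete intersection $Z\supseteq Y$, absorbing $Z(\bZ)\cap r\B$ into $O(r^{n-s})$ integer points, and using $0\ne \lvert f_{i_0}(\Bx)\rvert\ll r^{D}$ to confine the admissible primes to $O(1)$ per base point. Two things deserve more care than the sketch admits. First, the exceptional strata do \emph{not} contribute ``strictly lower order'' but the same order $O(r^{n-s+1})$: dropping onto a codimension-one degenerate locus in $\bA^{n-s+1}$ loses one factor of $r$ in the base count but returns it through the now essentially unconstrained extra coordinate, so the bound is recovered rather than improved --- the bookkeeping must be done to the same precision on the degenerate stratum. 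Second, the iterated-resultant formulation is genuinely delicate once $s\ge 3$: the locus $V\bigl(\operatorname{Res}_{x_n}(f_1,f_j):j\bigr)$ can strictly contain the closure of the projection of $Z$, and arranging that the successive pairwise resultants remain coprime at every elimination stage is exactly the kind of genericity that is painful to pin down. The cleaner organisation of the large-prime range --- and, as far as I can tell, the one Ekedahl and Bhargava actually use --- is to project one coordinate at a time, replace $Z$ at each step by the Zariski closure of its image (the codimension drops by exactly one under a finite projection arranged by an $\operatorname{SL}_n(\bZ)$ change of variables), and induct on $n$, carrying the trivial codimension-$1$ bound $O(r^n)$ as the base of the induction. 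You correctly flag the iterated elimination as the crux; with the one-coordinate-at-a-time organisation, the induction you gesture at does close, without the need to control all pairwise resultants at once.
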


We factor $F$ into linear factors over $\ol{\bQ}$, where
\begin{equation} \label{C factor} F(\Bx) = \prod_{j=1}^d \left(\psi_1^{(j)} x_1 + \cdots + \psi_n^{(j)} x_n \right) = \prod_{i=1}^d L_i(\Bx). \end{equation}
Let $Y_{i,j}$ denote the variety defined by $L_i(\Bx) = L_j(\Bx) = 0$, and let $Y = \bigcup_{1 \leq i < j \leq n} Y_{i,j}$. Since $Y$ is invariant under the action of $\Gal(\ol{\bQ}/\bQ)$, it is defined over $\bQ$. Moreover it has co-dimension at least two in $\bA_\bZ^n$. Let $p$ be a prime. Over $\bZ_p$, we have the factorization (see \cite{Hoo1}) of $F$ into
\[F(\Bx) = F^\ast(\Bx) \prod_{i=1}^{\tau_F(p)} \L_i(\Bx),\]
where $F^\ast, \L_i$ have $\bZ_p$-coefficients and $F^\ast$ does not have linear factors over $\bQ_p$. Let $\S_p$ be those congruence classes $\Bx$ in $(\bZ/p\bZ)^n = \bF_p^n$ such that either
\begin{itemize}
\item[(a)] There exist $1 \leq i < j \leq \tau_F(p)$ such that $\L_i(\Bx) \equiv \L_j(\Bx) \equiv 0 \pmod{p}$, or
\item[(b)] $F^\ast(\Bx) \equiv 0 \pmod{p}$. 
\end{itemize}
Since linear factors of $F^\ast$ are not defined over $\bF_p$ and hence has a non-trivial conjugate, it follows that whenever $\Ba \in \S_p$ that $\Ba \in Y(\bF_p)$. We then have the following consequence of Ekedahl's sieve:

\begin{lemma} \label{eke consequence} Let $N_3^\ast(B)$ denote the number of elements $\Bx \in \bZ^n \cap [-B,B]^n$ for which $\Bx \pmod{p} \in \S_p$ for some $p > \xi_1$. Then
\[N_3^\ast(B) = O \left(\frac{B^n}{\xi_1 \log \xi_1} + B^{n-1} \right).\]
\end{lemma}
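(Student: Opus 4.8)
The plan is to read Lemma~\ref{eke consequence} off directly from Proposition~\ref{Ekedahl sieve}. Take $\B = [-1,1]^n$, so that $[-B,B]^n = B\cdot\B$, and apply the Ekedahl sieve with $r = B$, $M = \xi_1$, and with $Y = \bigcup_{1 \le i < j \le d} Y_{i,j}$ the variety cut out by the vanishing of pairs of the linear forms in the factorization (\ref{C factor}). The two facts needed as input have already been recorded above: first, $Y$ is a closed subscheme of $\bA_\bZ^n$ of co-dimension $s \ge 2$; second, $\S_p \subseteq Y(\bF_p)$ for every prime $p$, because any $\Bx$ of type (a) or (b) in the definition of $\S_p$ makes two distinct conjugate linear forms among the $L_i$ vanish at $\Bx$ modulo $p$.

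With these in hand, observe that the condition defining the count $N_3^\ast(B)$ — namely $\Bx \bmod p \in \S_p$ for some $p > \xi_1$ — implies $\Bx \bmod p \in Y(\bF_p)$ for some $p > \xi_1$. Hence the set counted by $N_3^\ast(B)$ is contained in the set estimated by Proposition~\ref{Ekedahl sieve} with the parameters above, so by monotonicity of counting
\[
N_3^\ast(B) \;\le\; O\!\left(\frac{r^n}{M^{s-1}\log M} + r^{\,n-s+1}\right).
\]
Specializing $s = 2$, $r = B$, $M = \xi_1$ gives $N_3^\ast(B) = O\!\left(B^n/(\xi_1\log\xi_1) + B^{n-1}\right)$, which is exactly the assertion.

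The only points requiring care — and thus the ``main obstacle,'' though a mild one — are the geometric inputs: one must check that $Y$ genuinely has co-dimension at least two in $\bA^n_\bZ$, which uses $\Delta(F)\neq 0$ to guarantee that no two of the $L_i$ are proportional (so each $Y_{i,j}$ has co-dimension two in the generic fiber), together with the trivial fact that no $Y_{i,j}$ fills an entire special fiber $\bA^n_{\bF_p}$; and one must confirm that applying the sieve with the value $s = 2$, rather than some larger co-dimension available at most primes, is precisely what produces the stated shape of the error term. Beyond verifying these, the proof is immediate.
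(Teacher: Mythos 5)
Your proposal is correct and is exactly the argument the paper intends: the paper states the lemma as an immediate consequence of Proposition \ref{Ekedahl sieve}, having already set up $Y$, verified $\S_p \subseteq Y(\bF_p)$, and noted $\operatorname{codim} Y \geq 2$, so one applies the sieve with $r = B$, $M = \xi_1$, $s = 2$ just as you do. Your extra care in checking the codimension via $\Delta(F) \neq 0$ (and your correction of the index range to $1 \le i < j \le d$) only makes the argument more complete.
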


Note that Lemma \ref{eke consequence} completes the proof of Lemma \ref{Main Lem}.

\section{The Selberg sieve}
\label{Selberg}

In this section we use a variant of the Selberg sieve to give an upper bound for $N_3(B)$. Our main goal in this section is to establish the following proposition:

\begin{proposition} \label{N3 estimate} Let $N_3(B)$ be as given in (\ref{M3}). Then $N_3(B) = o(B^n)$. 
\end{proposition}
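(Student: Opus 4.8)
The plan is to bound $N_3(B)$ by splitting according to the size of the prime $p > \xi_2$ for which $p^k \mid F(\Bx)$, and to apply a Selberg-type sieve in the regime where $p$ is not too large, reserving an elementary geometry-of-numbers / Ekedahl argument for the very large primes. First I would recall that if $\Bx$ is counted by $N_3(B)$ then $F(\Bx) \neq 0$ (since $\Delta(F) \neq 0$ forces $F$ to be nonvanishing away from a proper subvariety, and the contribution of $F(\Bx) = 0$ is $O(B^{n-1})$), so $|F(\Bx)| \ll_F B^d$, and therefore any prime $p$ with $p^k \mid F(\Bx)$ satisfies $p \ll_F B^{d/k}$. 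By the reduction at the end of Section \ref{Prelim}, we may assume $F$ is $\bQ$-irreducible of degree $d = \deg F$, so it suffices to bound each $N_3^{(j)}(B)$; write simply $F$ for the irreducible factor and $d$ for its degree.

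The core step is a Selberg sieve upper bound, in the form used by Hooley in \cite{Hoo1}: for a squarefree sifting parameter ranging over primes in a dyadic block $[P, 2P]$ with $\xi_2 < P \ll B^{d/k}$, the number of $\Bx \in [-B,B]^n$ with $p^2 \mid F(\Bx)$ for some $p \in [P,2P]$ is
\[
\ll \sum_{P < p \leq 2P} \Bigl( \frac{B^n \rho_F(p^2)}{p^{2n}} + E(p, B) \Bigr),
\]
where $E(p,B)$ is a lattice-point error term controlled via Lemma \ref{lattice lemma} and Lemma \ref{lattice lemma 0}, exactly as in the estimate of $N_2(B)$ in Lemma \ref{Main Lem}. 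Using Lemma \ref{rho lemma}, $\rho_F(p^2) = O(p^{2(n-1)} + p^n)$, so the main term summand is $O(B^n/p^2 + B^n/p^n)$; summing over $p > \xi_2 = B^n(\log B)^{2/3}$ gives $O(B^n/\xi_2) = o(1)$, which is negligible. Thus the entire difficulty is concentrated in the lattice error terms $E(p,B)$ for $p$ in the range $\xi_2 < p \ll B^{d/k}$. Here I would split, as in Lemma \ref{Main Lem}, according to whether the shortest lattice vector $M_\Lambda$ is large (say $\geq B(\log B)^{-c}$) or small: when $M_\Lambda$ is large the $B^{n-1}/M_\Lambda^{n-1}$ terms are tiny and summable against the number of primes up to $B^{d/k}$; when $M_\Lambda$ is small, one fixes the short vector $\Bx_1$ (with $\|\Bx_1\| \leq B$) and counts, for each such $\Bx_1$, the primes $p > \xi_2$ with $p^2 \mid F_\Lambda(\Bx_1)$ — and since $|F_\Lambda(\Bx_1)| \ll B^{\deg F_\Lambda}$, at most $O(\deg F_\Lambda / \log \xi_2) = O(1)$ such primes occur, so this sub-sum is $\ll B^{n-1} \cdot (B(\log B)^{-c})^{?} = o(B^n)$ after summing $\sum 1/\|\Bx_1\|^{n-1}$ over the short-vector box.

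The remaining ingredient is to handle the contributions excluded from the single-factor analysis, i.e. the $\Bx$ for which distinct geometric linear factors $L_i, L_j$ are simultaneously divisible by high powers of $p$, or for which the "non-$\bF_p$-rational" part $F^\ast$ is divisible by $p$; this is precisely what Lemma \ref{eke consequence} controls, giving $N_3^\ast(B) = O(B^n/(\xi_1 \log \xi_1) + B^{n-1}) = o(B^n)$ provided $\xi_1 \to \infty$. Assembling the three pieces — the convergent main term killed by the largeness of $\xi_2$, the Selberg/lattice error terms, and the Ekedahl contribution — yields $N_3(B) = o(B^n)$. The main obstacle I anticipate is the bookkeeping for the small-$M_\Lambda$ sub-sum $S_3$ over the enormous prime range $p \ll B^{d/k}$: one must verify that condition (\ref{k-d rel}), $k \geq (d-2)/n$, is exactly what makes the exponent work out, namely that $B^{n-1} \cdot B^{n - (\text{something})}$ stays $o(B^n)$; indeed the box of short vectors has $\ll (B(\log B)^{-c})^n$ points, but one only sums $1/\|\Bx_1\|^{n-1}$ over it contributing $\ll B (\log B)^{-c}$, times $B^{n-1}$, times $O(1)$ primes — so the genuine constraint $d \leq nk + 2$ must enter through ensuring $|F^\ast|$ or $|F_\Lambda|$ is small enough that the count of large prime-power divisors is $O(1)$, i.e. through $\deg F_\Lambda / k < $ the relevant threshold. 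Making this quantitative bound on the number of admissible large primes precise, uniformly over all lattice types (a), (b), (c), is where the real work lies.
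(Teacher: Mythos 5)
Your decomposition of $N_3(B)$ into the Ekedahl contribution $N_3^\ast(B)$ (handled by Lemma \ref{eke consequence}) plus a main part is the same as the paper's, and your observation that the relevant primes satisfy $q \ll B^{d/k}$ is correct. But the core of your argument --- a union bound over primes $q > \xi_2$ combined with the lattice point count of Lemmas \ref{lattice lemma 0} and \ref{lattice lemma} --- cannot deliver $o(B^n)$ in this range, and this is precisely why the paper abandons the geometry-of-numbers method at $\xi_2$ and switches to a power-residue/Selberg argument. Concretely: for $q > \xi_2 = B^n(\log B)^{2/3}$ the lattices in $\U_q$ have determinant $\geq q^2 > B^{2n}$, so a typical such lattice contains no nonzero point of $[-B,B]^n$ at all; your claim that in the large-$M_\Lambda$ regime the terms $B^{n-1}/M_\Lambda^{n-1}$ are ``summable against the number of primes up to $B^{d/k}$'' fails, since $\pi(B^{d/k})$ can be as large as $B^{n+1}/\log B$ (take $k=2$, $d=2n+2$) and each summand is only bounded by a power of $\log B$. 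If one instead reorganizes by the points $\Bx$ (as in your small-$M_\Lambda$ case), one finds that each $\Bx$ admits $O(1)$ primes $q>\xi_2$ with $q^k\mid F(\Bx)$, and the whole method collapses to the trivial bound $O(B^n)$ with no saving. There is no choice of threshold for $M_\Lambda$ that rescues this.

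The missing idea is the one occupying Lemmas \ref{u factor} through \ref{final lemma}. One writes $F(\Bx) = uq^k$ and notes that condition (\ref{k-d rel}) forces $u = O(B^{d-nk}(\log B)^{-2k/3}) = O(B^2(\log B)^{-2k/3})$ --- this, and not any bound on $\deg F_\Lambda$, is where the hypothesis $k \geq (d-2)/n$ enters. One then fixes $u = u_1u_2$ and must count $\Bx$ with $F(\Bx)/u$ a perfect $k$-th power; imposing only the congruence $F(\Bx)\equiv 0 \pmod{u}$ and summing $B^n\rho_F(u)/u^n \asymp B^n/u$ over $u \ll B^2$ gives $B^n\log B$, which is worse than trivial. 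The decisive saving comes from Lemma \ref{big D lem}: since $q^k$ is a $k$-th power residue modulo the auxiliary modulus $\D(u_2)$ built from primes $p\equiv 1\pmod k$, the point $\Bx$ is confined to a $k^{-\omega(\D)}$ proportion of the residue classes modulo $\D$, and it is this factor $k^{-\omega(\D)}$ (Lemma \ref{H bound}) that converts $B^n\log B$ into $o(B^n)$ in Lemma \ref{N4B}. The Beurling--Selberg majorants and Poisson summation are then needed not as a decoration but to control the error incurred in counting points of the box in these residue classes modulo $u_1u_2\D$, which is the content of Lemmas \ref{S lemma}, \ref{QB lem 1}, and \ref{final lemma}. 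Your proposal contains none of these steps, so it has a genuine gap at the heart of the proposition.
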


Proposition \ref{N3 estimate} will follow from Lemmas \ref{H bound}, \ref{N3 smooth bound}, \ref{N4B}, \ref{QB lem 1}, and \ref{final lemma} below as well as Lemma \ref{eke consequence}. Consider the set 
\begin{equation} \label{final error} \N_3^\dagger(B) = \left\lbrace \Bx \in \bZ^n \cap [-B,B]^n \text{ } \bigg\vert
\begin{array}{@{}c@{}c@{}c}
F(\Bx) = uq^k, u \text{ is indivisible by } p^k \text{ for } p \leq \xi_1, \\
 \mbox{indivisible by } p^2 \text{ for } \xi_1 < p \leq \xi_2, \\
  q \text{ is a prime exceeding } \xi_2, \Bx \not \in \S_p \text{ for all } p | u.
\end{array}
\right\rbrace, 
\end{equation}
and put $N_3^\dagger(B) = \# \N_3^\dagger(B)$. Observe that
\begin{equation} \label{N3 decomp} N_3(B) = N_3^\dagger(B) + N_3^\ast(B).\end{equation} 
We shall establish the following preliminary result: 

\begin{lemma} \label{u factor} Let $\Bx \in \N_3^\dagger(B)$ and $u,q$ be as in (\ref{final error}). Then we have
\[u = O\left(B^2 (\log B)^{-2k/3} \right).\]
Furthermore, $u$ can be written as $u = u_1 u_2$, where $u_1$ divides
\[C(\xi_1) = \prod_{p \leq \xi_1} p^{k-1},\]
and $u_2$ is square-free with each prime divisor $p$ of $u_2$ satisfying $\xi_1 < p \leq \xi_2$. 
\end{lemma}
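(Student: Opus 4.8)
The plan is to extract the claimed bounds directly from the definition of $\N_3^\dagger(B)$ in (\ref{final error}), using the size constraints $|x_i| \le B$ together with the constraints on the prime factorization of $u$. First I would observe that for $\Bx \in \N_3^\dagger(B)$ we have $F(\Bx) = O_F(B^d)$ since $F$ is homogeneous of degree $d$ (with the implied constant absorbing the coefficient sizes), and that $F(\Bx) = uq^k$ with $q$ prime exceeding $\xi_2 = B^n(\log B)^{2/3}$. Hence
\[
u = \frac{F(\Bx)}{q^k} \ll_F \frac{B^d}{\xi_2^{\,k}} = \frac{B^d}{B^{nk}(\log B)^{2k/3}}.
\]
At this point I would invoke the degree hypothesis (\ref{k-d rel}), namely $k \ge (d-2)/n$, which is equivalent to $nk \ge d - 2$, i.e. $d - nk \le 2$; therefore $B^{d-nk} \le B^2$ and the displayed quantity is $O_F\!\left(B^2 (\log B)^{-2k/3}\right)$, which is the first assertion. (Strictly, it may be cleanest to also note $q^k$ could be replaced by the cruder bound $q^k > \xi_2^k$, and that $q$ being a prime divisor of $F(\Bx)$ contributing a $k$-th power is exactly what was arranged in the definition of $N_3(B)$ and carried into $\N_3^\dagger(B)$.)

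For the factorization $u = u_1 u_2$, I would argue purely from the divisibility conditions imposed in (\ref{final error}). By definition $u$ is indivisible by $p^k$ for every prime $p \le \xi_1$ and indivisible by $p^2$ for every prime $p$ with $\xi_1 < p \le \xi_2$; moreover every prime divisor of $u$ is at most $\xi_2$, because the only prime factor of $F(\Bx)$ exceeding $\xi_2$ is $q$ (as $q$ is a prime exceeding $\xi_2$ and $F(\Bx) = uq^k$ with $q^k$ accounting for all of it — here one uses that $q^2 \nmid u$ would follow if two such large primes occurred, contradicting $F(\Bx) = O(B^d)$ against $\xi_2^{2} \gg B^{2n}$; more simply, this restriction is explicitly built into how $\N_3^\dagger$ was peeled off from $N_3$). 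I would then set
\[
u_1 = \prod_{p \le \xi_1} p^{\,v_p(u)}, \qquad u_2 = \prod_{\xi_1 < p \le \xi_2} p^{\,v_p(u)},
\]
so that $u = u_1 u_2$ by unique factorization. Since $v_p(u) \le k-1$ for $p \le \xi_1$, the factor $u_1$ divides $\prod_{p \le \xi_1} p^{k-1} = C(\xi_1)$; since $v_p(u) \le 1$ for $\xi_1 < p \le \xi_2$, the factor $u_2$ is square-free with every prime divisor in the range $(\xi_1, \xi_2]$, as claimed.

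I do not expect a serious obstacle here: this lemma is essentially a bookkeeping consequence of the definitions together with the arithmetic identity $nk \ge d-2$. The only point requiring a little care is making sure that the primes dividing $u$ genuinely all lie below $\xi_2$ — i.e. that $q$ really is the \emph{unique} prime exceeding $\xi_2$ dividing $F(\Bx)$ — which is where one should cite the size bound $F(\Bx) \ll_F B^d$ against $\xi_2^{2} = B^{2n}(\log B)^{4/3}$, valid since $d \le 2n+2$ from (\ref{k-d rel}) when $k = 2$ and, more generally, since a second prime power factor $p^k$ with $p > \xi_2$ would force $F(\Bx) > \xi_2^{2k} \gg B^{2nk} \ge B^{d}$ for $B$ large, a contradiction. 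With that observation in place the remaining assertions are immediate from the valuation bounds encoded in (\ref{final error}).
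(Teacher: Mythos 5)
Your bound on $u$ and the factorization into $u_1 u_2$ are correct and essentially identical to the paper's: both use $|u| \ll_F B^d \xi_2^{-k} = B^{d-nk}(\log B)^{-2k/3} \leq B^2(\log B)^{-2k/3}$ via (\ref{k-d rel}), and both read off $u_1 \mid C(\xi_1)$ and the square-freeness of $u_2$ from the valuation conditions in (\ref{final error}).

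The genuine gap is in the step showing that $u$ has no prime factor exceeding $\xi_2$ (without which $u_1 u_2$ does not exhaust $u$). Neither of your two proposed comparisons works. Comparing $|F(\Bx)| \ll_F B^d$ against $\xi_2^2 \gg B^{2n}$ gives no contradiction: with $k=2$ the hypothesis allows $d$ as large as $2n+2 > 2n$. And the alternative argument, that a ``second prime power factor $p^k$ with $p > \xi_2$ would force $F(\Bx) > \xi_2^{2k}$,'' assumes the offending large prime divides $F(\Bx)$ to the full $k$-th power; but a prime $p > \xi_2$ dividing $u$ may divide $u$ (and hence $F(\Bx)$, aside from $q^k$) only to the first power, so $\xi_2^{2k}$ is not a valid lower bound. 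Nor is the restriction ``explicitly built into'' (\ref{final error}): that definition constrains $u$ only at primes $p \leq \xi_2$ and says nothing about primes above $\xi_2$, which is exactly why the lemma must prove there are none. The argument that does work, and that the paper uses, is: if some prime $p > \xi_2$ divides $u$, then
\[
|F(\Bx)| = u q^k \geq p q^k > \xi_2^{k+1} = B^{n(k+1)}(\log B)^{2(k+1)/3},
\]
and since $n(k+1) = nk + n \geq (d-2) + 2 = d$ (using $n \geq 2$ together with (\ref{k-d rel})), this is $\gg B^d \log B$, contradicting $|F(\Bx)| \ll_F B^d$ once $B$ is large. Substituting this in place of your two suggested comparisons closes the gap.
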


\begin{proof}
Observe that from $F(\Bx) = uq^k$ and our assumptions on $q$, we have
\[u = O\left(B^d \xi_2^{-k} \right).\]
By (\ref{k-d rel}) and (\ref{xi2}), there exists an absolute positive constant $C_1$ such that
\begin{align*} |u| & < C_1 B^{d - kn} (\log B)^{-2k/3} \\
& \leq C_1 B^{d - d + 2} (\log B)^{-2k/3} \\
& = C_1 B^2 (\log B)^{-2k/3}.
\end{align*}
We now factor $u$ into two factors $u_1$ and $u_2$, where $u_1$ consists of only prime factors less than $\xi_1$. We observe that since we have accounted for small prime powers via our treatment of $N_1(B)$, we have that $u_1$ divides $\prod_{p \leq \xi_1} p^{k-1}.$  The factor $u_2$, then, will be composed of prime factors larger than $\xi_1$. Further, it must be \emph{square-free}. This is because, by definition, the prime factors of $u$ between $\xi_1$ and $\xi_2$ divide $u$ exactly once, and $u$ cannot have a prime factor exceeding $\xi_2$, since otherwise
\[uq^k \gg B^{n(k+1)} \log B \gg B^d \log B,\]
which contradicts $\Bx \in [-B,B]^n$ for $B$ sufficiently large. \end{proof}
For each square-free integer $u_2$ such that each prime divisor $p$ of $u_2$ satisfies $\xi_1 < p \leq \xi_2$, put
\begin{equation} \label{big D}\D(u_2) = \prod_{\substack{\xi_1 < p \leq \frac{1}{12} \log(B^2 u_2^{-1}) \\ p \nmid u_2 \\ p \equiv 1 \pmod{k}}} p.\end{equation} 
We then have the following lemma:
\begin{lemma} \label{big D lem} Let $u_2$ be a square-free integer such that all of its prime divisors are between $\xi_1$ and $\xi_2$. Let $\omega(m)$ denote the number of distinct prime divisors of $m$. Let $\D(u_2)$ be as in (\ref{big D}). If $q > \xi_2$ is a prime, then there exists exactly $k^{\omega(\D)}$ residue classes $\{\fd_1, \cdots, \fd_{k^{\omega(D)}} \}$ such that
\[\fd_j^k \equiv q^k \pmod{\D}\]
for $j = 1, \cdots, k^{\omega(D)}$. 
\end{lemma}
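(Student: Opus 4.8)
The plan is to count solutions to $\fd^k \equiv q^k \pmod{\D}$ by working prime-by-prime and invoking the Chinese Remainder Theorem, since $\D$ is square-free. For each prime $p \mid \D$, I need to count the solutions $\fd \in (\bZ/p\bZ)^*$ to $\fd^k \equiv q^k \pmod p$; note that $q$ is a prime exceeding $\xi_2 \geq p$, so $q$ is a unit modulo $p$, and hence any solution $\fd$ must also be a unit. Thus the count reduces to the number of solutions of $(\fd q^{-1})^k \equiv 1 \pmod p$, i.e. the number of $k$-th roots of unity in the cyclic group $(\bZ/p\bZ)^*$, which has order $p - 1$. This number is exactly $\gcd(k, p-1)$.

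First I would record the definition of $\D = \D(u_2)$ from (\ref{big D}) and observe that every prime $p$ dividing $\D$ satisfies $p \equiv 1 \pmod k$, so that $k \mid p - 1$ and therefore $\gcd(k, p-1) = k$. Hence for each such prime there are exactly $k$ residue classes modulo $p$ solving $\fd^k \equiv q^k \pmod p$. Next I would apply the Chinese Remainder Theorem: since $\D$ is square-free with $\omega(\D)$ distinct prime factors, a residue class $\fd \pmod{\D}$ solving $\fd^k \equiv q^k \pmod{\D}$ corresponds bijectively to a choice, independently for each prime $p \mid \D$, of a residue class modulo $p$ solving the congruence modulo $p$. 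Multiplying, the total number of such classes is $\prod_{p \mid \D} k = k^{\omega(\D)}$, which is the claimed count.

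The one genuinely substantive point — and the only place a subtlety could hide — is the reduction from the congruence modulo $p$ to counting $k$-th roots of unity: I should be careful that $q$ is invertible modulo each $p \mid \D$. This holds because $q > \xi_2 \ge p$ and $q$ is prime, so $p \nmid q$; one should remark this explicitly. Everything else is the standard structure theory of the cyclic group $(\bZ/p\bZ)^*$ (the map $x \mapsto x^k$ has image of index $\gcd(k, p-1)$ and kernel of size $\gcd(k,p-1)$) together with CRT, so I do not anticipate any real obstacle. I would close by noting that the hypotheses $\xi_1 < p \le \frac{1}{12}\log(B^2 u_2^{-1})$ and $p \nmid u_2$ appearing in the definition of $\D$ play no role in this particular lemma; they are there only so that $\D$ is a convenient modulus for the later sieve, and the congruence-counting statement is insensitive to which primes are actually included, as long as each satisfies $p \equiv 1 \pmod k$.
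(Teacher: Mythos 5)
Your proof is correct and follows essentially the same route as the paper's: reduce modulo each prime $p \mid \D$, use $p \equiv 1 \pmod{k}$ and the invertibility of $q$ modulo $p$ to get exactly $k = \gcd(k,p-1)$ solutions per prime, and conclude by the Chinese Remainder Theorem. Your write-up is in fact a bit more explicit than the paper's about why $q$ is a unit modulo $\D$, which is the only point where care is needed.
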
 

\begin{proof} Since all prime divisors of $\D$ are $O(\log B)$, it follows that $q^k$ is a proper $k$-th power residue modulo $\D$. Now consider the family of all $k$-th power residues modulo $\D$. By our choice of $\D$, we have that $k | \varphi(\D)$, so that the family of $k$-th power residues is not the set of all residues modulo $\D$. For each $p | \D$, $q^k$ has $k$ pre-images modulo $p$, meaning there exist $k$ distinct elements $\mathfrak{q}_1, \cdots, \mathfrak{q}_k$ in $\{0, 1, \cdots, p-1\}$ such that $\mathfrak{q}_j^k \equiv q^k \pmod{\Q}$. For a positive integer $l$ let us write $\omega(l)$ for the number of distinct prime divisors of $l$. Then it follows from the Chinese Remainder Theorem that there exist $k^{\omega(\D)}$ residue classes $\{\mathfrak{d}_1, \cdots, \mathfrak{d}_{k^{\omega(\D)}}\}$ modulo $\D$ such that $\mathfrak{d}_j^k \equiv q^k \pmod{\D}$. \end{proof}

Let $C_1$ be as in Lemma \ref{u factor}, and put $\xi_3 = C_1 B^2 (\log B)^{-2k/3}$. Lemmas \ref{u factor} and \ref{big D lem} have the following consequence, which is crucial for our estimation of $N_3(B)$:

\begin{lemma} \label{H bound} Let $u_1$ be a divisor of $C(\xi_1)$ and let $u_2$ a square-free integer whose prime divisors $p$ satisfy $\xi_1 < p \leq \xi_3$. Let $H_{u_1, u_2}(B)$ be the number of solutions $(m_1, \cdots, m_n) \in \bZ^n \cap [-B,B]^n$ to the following three congruences:
\begin{equation} \label{l-1} F(m_1, \cdots, m_n) \equiv 0 \pmod{u_1},
\end{equation}
\begin{equation} \label{l-2} F(m_1, \cdots, m_n) \equiv 0 \pmod{u_2},
\end{equation}
and for $0 \leq s < \D$, the solutions to the congruences
\begin{equation} \label{script D} F(m_1, \cdots, m_n) \equiv u_1 u_2 s^k \pmod{\D}
\end{equation}
such that $(m_1, \cdots, m_n) \pmod{p} \not \in \S_p$ for $p | u_1 u_2$. Then we have 
\begin{equation} \label{k-cover} N_3(B) \leq \sum_{\substack{u_1 | C(\xi_1) \\ u_2 \leq \xi_3}} \frac{ H_{u_1, u_2}(B)}{k^{\omega(\D)}} + N_3^\ast(B).\end{equation}
\end{lemma}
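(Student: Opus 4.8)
By the splitting (\ref{N3 decomp}), namely $N_3(B)=N_3^\dagger(B)+N_3^\ast(B)$, and since $N_3^\ast(B)$ already appears on the right-hand side of (\ref{k-cover}), it suffices to bound $N_3^\dagger(B)=\#\N_3^\dagger(B)$ by $\sum_{u_1\mid C(\xi_1),\,u_2\le\xi_3}H_{u_1,u_2}(B)/k^{\omega(\D)}$. The plan is to exhibit each $\Bx\in\N_3^\dagger(B)$ (recall (\ref{final error})) as a solution of the congruence system defining $H_{u_1,u_2}(B)$ for a single admissible index $(u_1,u_2)$, with a controlled multiplicity. Here $H_{u_1,u_2}(B)$ is read as the number of pairs $(\Bm,s)$ with $\Bm\in\bZ^n\cap[-B,B]^n$, $0\le s<\D$, satisfying (\ref{l-1}), (\ref{l-2}), (\ref{script D}) and $\Bm\bmod p\notin\S_p$ for all $p\mid u_1u_2$; the factor $k^{\omega(\D)}$ in (\ref{k-cover}) is precisely the overcount produced by the auxiliary variable $s$.

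First I would take $\Bx\in\N_3^\dagger(B)$ and apply Lemma \ref{u factor} to write $F(\Bx)=u_1u_2q^k$ with $q>\xi_2$ prime, $u_1\mid C(\xi_1)$, and $u_2$ square-free with every prime divisor in $(\xi_1,\xi_2]$ and $u_2\le|u|<\xi_3$. In particular each prime divisor of $u_2$ is at most $u_2<\xi_3$, so the pair $(u_1,u_2)$ lies in the index set of the sum in (\ref{k-cover}) and $u_2$ satisfies the hypotheses of the present lemma. Assigning to each $\Bx$ one such factorization partitions $\N_3^\dagger(B)$ into fibers indexed by pairs $(u_1,u_2)$, and it remains to bound the size of the fiber over $(u_1,u_2)$ by $H_{u_1,u_2}(B)/k^{\omega(\D)}$.

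Next I would check that $\Bx$ contributes to $H_{u_1,u_2}(B)$ and with what multiplicity. Since $u_1u_2\mid F(\Bx)$, congruences (\ref{l-1}) and (\ref{l-2}) hold, and $\Bx\bmod p\notin\S_p$ for $p\mid u_1u_2$ is part of the definition of $\N_3^\dagger(B)$; none of these conditions involves $s$. Every prime dividing $\D=\D(u_2)$ exceeds $\xi_1$ but is $O(\log B)$, hence is coprime to $u_1$, does not divide $u_2$ by construction, and does not divide $q$ (as $q>\xi_2$); so $\gcd(\D,u_1u_2q)=1$. Consequently, for $0\le s<\D$ the pair $(\Bx,s)$ is counted by $H_{u_1,u_2}(B)$ exactly when $u_1u_2s^k\equiv u_1u_2q^k\pmod{\D}$, that is, since $u_1u_2$ is invertible modulo $\D$, exactly when $s^k\equiv q^k\pmod{\D}$; by Lemma \ref{big D lem} there are precisely $k^{\omega(\D)}$ such $s$ (in particular at least one, e.g.\ the residue of $q$, so the contribution is genuine).

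Therefore each $\Bx$ in the fiber over $(u_1,u_2)$ contributes exactly $k^{\omega(\D)}$ pairs to $H_{u_1,u_2}(B)$, and pairs arising from distinct $\Bx$ are distinct, so the fiber has at most $H_{u_1,u_2}(B)/k^{\omega(\D)}$ elements. Summing over all $(u_1,u_2)$ that occur as a fiber index—a subset of $\{u_1\mid C(\xi_1),\,u_2\le\xi_3\}$—and adding $N_3^\ast(B)$ via (\ref{N3 decomp}) yields (\ref{k-cover}). The only points requiring care are the pair-reading of $H_{u_1,u_2}(B)$ (forced by the appearance of $k^{\omega(\D)}$ in the claim) and the verification that Lemma \ref{u factor} always produces an index in the stated range; the substantive work—replacing the rigid condition ``$q^k\parallel F(\Bx)$ with $q$ a single prime exceeding $\xi_2$'' by an honest family of congruences modulo $\D$—has already been carried out in Lemmas \ref{u factor} and \ref{big D lem}, so no real obstacle remains.
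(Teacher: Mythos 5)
Your argument is correct and follows the same approach as the paper, which gives only a one-sentence justification invoking Lemma \ref{big D lem}; you have correctly identified that $H_{u_1,u_2}(B)$ must be read as a count of pairs $(\Bm,s)$, traced through Lemma \ref{u factor} to place each $\Bx \in \N_3^\dagger(B)$ in the index range $u_1 \mid C(\xi_1)$, $u_2 \le \xi_3$, verified $\gcd(u_1u_2,\D)=1$, and applied Lemma \ref{big D lem} to obtain the exact overcount $k^{\omega(\D)}$. This is precisely what the paper's proof leaves implicit, so there is nothing to add.
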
 
\begin{proof} (\ref{k-cover}) follows from the fact that the solutions to (\ref{script D}) can be partitioned into sets of cardinality $k^{\omega(\D)}$ by Lemma \ref{big D lem}.  
\end{proof} 

In view of Lemma \ref{eke consequence}, we shall be primarily concerned with the term
\[N_3^\dagger(B) = \sum_{\substack{u_1 | C(\xi_1) \\ u_2 \leq \xi_3}} \frac{ H_{u_1, u_2}(B)}{k^{\omega(\D)}}.\]

\subsection{Selberg sieve weights} 

We now introduce the relevant Selberg sieve weights. Selberg devised an ingenious method to establish an upper bound for counting integer points in a box. To state this precisely, suppose that we wanted to count the set of points inside the box $[-B,B]^n$ satisfying a set of congruence conditions $\R_l$ modulo a positive integer $l$. Selberg introduced smooth functions $\gamma$ which satisfy the inequality 
\begin{equation} \label{majorant} \sum_{\substack{(m_1, \cdots, m_n) \in \bZ^n \cap [-B,B]^n \\ (m_1, \cdots, m_n) \in \R_l}} 1 \leq \sum_{\substack{(m_1, \cdots, m_n) \in \bZ^n \\ (m_1, \cdots, m_n) \in \R_l}} \gamma(m_1) \cdots \gamma(m_n),\end{equation}
where $\gamma$ is an upper bound for the characteristic function $\chi_B(z)$ of the interval $[-B,B]$, tends to zero rapidly outside of this interval, and is sufficiently smooth to be conducive to Fourier analysis and the Poisson summation formula. This reduces various counting problems into a question about exponential sums, from which one can draw results from a vast literature, including the seminal works of Weil and Deligne. \\ \\
Our choice of $\gamma$ is identical to that of Hooley's in \cite{Hoo1}. Namely, we start with the function, first given by Beurling and later utilized by Selberg to establish the optimal general bound for the large sieve inequality:
\begin{equation} \label{beurling} \Beu(z) = \left(\frac{ \sin \pi z}{\pi}\right)^2 \left(\sum_{n=0} \frac{1}{(z-n)^2} - \sum_{n = -\infty}^{-1} \frac{1}{(z-n)^2} + \frac{1}{2z}\right).
\end{equation} 
For the interval $[-U,U]$ we construct the function
\[g_U(z) = \frac{1}{2} \left(\Beu(U - z) + \Beu(U + z)\right)\]
which has the property that it is non-negative and majorizes the characteristic function of $[-U,U]$ (see \cite{Vaa}). Further, it satisfies the important property that its Fourier transform $\hat{g}_U(t)$ satisfies
\begin{equation} \label{g-hat} \hat{g}_U(t) = \begin{cases} 2U+1 & \text{if } t = 0, \\ 0 & \text{if } |t| > U \end{cases}
\end{equation}
and
\[\lvert \hat{g}_U(t) \rvert \leq 2U+1.\]
We now define the function $\gamma$ as
\begin{equation} \label{smooth weight} \gamma(z) = g_1\left(\frac{z}{B}\right),\end{equation}
whence it follows that
\[\hat{\gamma}(t) = B \hat{g}_1(Bt). \]
It is clear that $\gamma(z) \geq \chi_B(z)$ for all real numbers $z$. Because of the smoothness of $\gamma$, we can evaluate the sum 
\[\sum_{\substack{(m_1, \cdots, m_n) \in \bZ^n \\ (m_1, \cdots, m_n) \in \R_l}} \gamma(m_1) \cdots \gamma(m_n)\]
via Poisson summation. We have the following lemma, which is standard:

\begin{lemma} \label{poisson} Let $l$ be a positive integer, and let $\R_l$ be a subset of $(\bZ/ l \bZ)^n$. Let $\gamma$ be as in (\ref{smooth weight}), and put
\[M_{\R_l}(B) = \sum_{\substack{(m_1, \cdots, m_n) \in \bZ^n \\ (m_1, \cdots, m_n) \in \R_l}} \gamma(m_1) \cdots \gamma(m_n).\]
Let 
\begin{equation} \label{ER} E_{\R_l}(t_1, \cdots, t_n; l) = \sum_{(a_1, \cdots, a_n) \in \R_l} e^{-2\pi i(a_1 t_1 + \cdots + a_n t_n)/l}. \end{equation}
Then
\begin{equation} \label{fourier} M_{\R_l}(B) = \frac{1}{l^n} \sum_{(t_1, \cdots, t_n) \in \bZ^n} \hat{\gamma} \left(\frac{t_1}{l}\right) \cdots \hat{\gamma} \left(\frac{t_n}{l} \right) E_{\R_l} (t_1, \cdots, t_n ; l).\end{equation}

\end{lemma}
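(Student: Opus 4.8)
The statement to prove is Lemma \ref{poisson}, the Poisson summation formula for the weighted count $M_{\R_l}(B)$.

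Proof plan:

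\medskip

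The plan is to expand the definition of $M_{\R_l}(B)$ by first stratifying the sum over $\bZ^n$ according to residue classes modulo $l$, then applying the classical Poisson summation formula coordinate-by-coordinate to the resulting sums over arithmetic progressions.

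First I would write each lattice point $(m_1, \dots, m_n) \in \bZ^n$ with $(m_1, \dots, m_n) \equiv (a_1, \dots, a_n) \pmod{l}$ for a unique $(a_1, \dots, a_n) \in (\bZ/l\bZ)^n$, and set $m_i = a_i + l n_i$ with $n_i$ ranging over $\bZ$. Since the summand $\gamma(m_1)\cdots\gamma(m_n)$ is a product, this gives
\[M_{\R_l}(B) = \sum_{(a_1, \dots, a_n) \in \R_l} \prod_{i=1}^n \left( \sum_{n_i \in \bZ} \gamma(a_i + l n_i) \right).\]
Next I would apply one-dimensional Poisson summation to each inner sum. For a Schwartz-class function $h$ one has $\sum_{n \in \bZ} h(a + ln) = \frac{1}{l} \sum_{t \in \bZ} \hat{h}\!\left(\frac{t}{l}\right) e^{2\pi i a t / l}$; applying this with $h = \gamma$ (which is legitimate because $\gamma$, being built from the Beurling function, is sufficiently smooth and decays, so $\hat\gamma$ is well-behaved and the relevant series converge absolutely — indeed $\hat\gamma$ is supported in $[-B,B]$ so the $t$-sum is finite) yields
\[\sum_{n_i \in \bZ} \gamma(a_i + l n_i) = \frac{1}{l} \sum_{t_i \in \bZ} \hat{\gamma}\!\left(\frac{t_i}{l}\right) e^{2\pi i a_i t_i / l}.\]
Substituting this for each $i$ and collecting the $1/l$ factors into $1/l^n$, the product over $i$ of the $t_i$-sums becomes a single sum over $(t_1, \dots, t_n) \in \bZ^n$ of $\prod_i \hat\gamma(t_i/l)$ times $\prod_i e^{2\pi i a_i t_i/l}$. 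Interchanging the (finite, or absolutely convergent) sums over $(a_1, \dots, a_n) \in \R_l$ and over $(t_1, \dots, t_n) \in \bZ^n$, the factor $\sum_{(a_1,\dots,a_n) \in \R_l} e^{2\pi i(a_1 t_1 + \cdots + a_n t_n)/l}$ is exactly $E_{\R_l}(-t_1, \dots, -t_n; l)$, and replacing $t_i$ by $-t_i$ throughout (which permutes $\bZ^n$ and leaves $\hat\gamma(t_i/l) = \hat\gamma(-t_i/l)$ unchanged since $\gamma$ is even) produces precisely formula (\ref{fourier}).

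\medskip

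There is essentially no hard part here: the only point requiring a word of care is the justification that Poisson summation applies to $\gamma$, which follows from the construction of $g_1$ from the Beurling function $\Beu$ and the explicit compact support of $\hat g_1$ in (\ref{g-hat}) — the transform side is a finite sum, and the spatial side converges absolutely because $\gamma(z) = g_1(z/B)$ inherits the $O(z^{-2})$ decay of $\Beu$. The evenness of $\gamma$ (hence of $\hat\gamma$), used to match signs in the exponential, is immediate from $\gamma(z) = \tfrac12(\Beu(1 - z/B) + \Beu(1 + z/B))$. Everything else is bookkeeping: separating the product, applying the one-dimensional identity $n$ times, and interchanging finite sums.
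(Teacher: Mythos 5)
Your argument is correct and is exactly the standard proof that the paper itself declines to give: the paper's ``proof'' is only ``See \cite{Hoo2},'' so there is no in-text argument to compare against. Your route --- stratifying $\bZ^n$ by residue classes modulo $l$, applying one-dimensional Poisson summation $\sum_{n}\gamma(a+ln) = \frac{1}{l}\sum_t \hat\gamma(t/l)e^{2\pi i at/l}$ to each coordinate, interchanging the (finite) sums, recognizing the character sum as $E_{\R_l}(-t_1,\dots,-t_n;l)$, and then using the evenness of $\gamma$ (hence of $\hat\gamma$) to flip the sign --- is the natural argument, and you correctly identify both nontrivial points: the applicability of Poisson summation (justified by the $O(z^{-2})$ decay of $g_1$ and the compact support of $\hat g_1$) and the parity of $\gamma$ (immediate from the symmetric combination $\tfrac12(\Beu(1-z/B)+\Beu(1+z/B))$).

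One small slip worth correcting: you write that ``$\hat\gamma$ is supported in $[-B,B]$.'' From $\hat\gamma(t) = B\hat g_1(Bt)$ together with the support condition on $\hat g_1$ in (\ref{g-hat}), the support of $\hat\gamma$ is $[-1/B,1/B]$, not $[-B,B]$. Consequently $\hat\gamma(t_i/l) = 0$ once $|t_i| > l/B$ --- which is exactly the truncation appearing in the definition of $M_{\R_l}^{++}(B)$ in the paper. Your conclusion that the $t$-sum is finite is therefore still correct; only the interval you named is off. This does not affect the validity of the proof.
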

\begin{proof} See \cite{Hoo2}. 
\end{proof}

We shall decompose $M_{\R_l}(B)$ into two terms, given by
\begin{equation} \label{ML} M_{\R_l}(B) = M_{\R_l}^+(B) + O\left(M_{\R_l}^{++}(B) \right),\end{equation}
where
\[M_{\R_l}^+(B) = \frac{1}{l^n} \left(\hat{\gamma}(0)\right)^n E_{\R_l}(0, \cdots, 0;l) = \frac{(3B)^n \# \R_l}{l^n}\]
and
\[M_{\R_l}^{++}(B) = \frac{B^n}{l^n} \sideset{}{'} \sum_{|t_i| \leq l/B} \lvert E_{\R_l}(t_1, \cdots, t_n ; l) \rvert,\]
where the symbol $\sideset{}{'} \sum$ denotes that the tuple $(0, \cdots, 0)$ had been omitted. We then have the following:

\begin{lemma} \label{N3 smooth bound} Let $l = u_1 u_2 \D$, where $u_1, u_2, \D$ are as in Lemma \ref{H bound}. Put $l = u_1 u_2 \D$, and let $\R_l =\R_{u_1 u_2 \D}$ denote the set of congruence classes modulo $l$ satisfying (\ref{l-1}), (\ref{l-2}), and (\ref{script D}). Then
\[N_3^\dagger(B) \leq \sum_{\substack{u_1 | C(\xi_1) \\ u_2 \leq \xi_3}} \frac{M_{\R_l}^+(B)}{k^{\omega(\D)}} + O \left( \sum_{\substack{u_1 | C(\xi_1) \\ u_2 \leq \xi_3}} \frac{M_{\R_l}^{++} (B)}{k^{\omega(\D)}} \right).\]
\end{lemma}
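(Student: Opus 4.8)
The goal is to prove Lemma~\ref{N3 smooth bound}, which is essentially an immediate consequence of combining Lemma~\ref{H bound} with Lemma~\ref{poisson} and the decomposition (\ref{ML}). The plan is as follows.

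\textbf{Setting up.} Fix $u_1 \mid C(\xi_1)$ and a square-free $u_2 \leq \xi_3$ whose prime divisors $p$ satisfy $\xi_1 < p \leq \xi_3$, and set $\D = \D(u_2)$ and $l = u_1 u_2 \D$. First I would observe that $u_1$, $u_2$, and $\D$ are pairwise coprime: $u_1$ is composed only of primes $\leq \xi_1$, while $u_2$ and $\D$ are composed only of primes exceeding $\xi_1$; and $\D$ is by its definition (\ref{big D}) supported on primes \emph{not} dividing $u_2$. Hence by the Chinese Remainder Theorem the system of congruences (\ref{l-1}), (\ref{l-2}), (\ref{script D}) defines a set $\R_l = \R_{u_1 u_2 \D}$ of residue classes modulo $l$, and $\# \R_l$ is the number of residue classes modulo $l$ satisfying all three congruences simultaneously.

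\textbf{Passing from the integer count to the smooth count.} The key point is that $H_{u_1,u_2}(B)$, defined in Lemma~\ref{H bound}, counts exactly those $(m_1,\dots,m_n) \in \bZ^n \cap [-B,B]^n$ lying in $\R_l$ (with the additional constraint $(m_1,\dots,m_n) \bmod p \notin \S_p$ for $p \mid u_1 u_2$, which only restricts $\R_l$ further, and we keep $\R_l$ to denote this restricted set). Since $\gamma(z) \geq \chi_B(z)$ for every real $z$, majorization (\ref{majorant}) gives
\[
H_{u_1,u_2}(B) \;\leq\; \sum_{\substack{(m_1,\dots,m_n) \in \bZ^n \\ (m_1,\dots,m_n) \in \R_l}} \gamma(m_1)\cdots\gamma(m_n) \;=\; M_{\R_l}(B).
\]
Now apply Lemma~\ref{poisson} to get the Fourier expansion (\ref{fourier}) for $M_{\R_l}(B)$, and then split off the zero frequency via (\ref{ML}): the $(t_1,\dots,t_n) = (0,\dots,0)$ term equals $M_{\R_l}^+(B) = (3B)^n \#\R_l / l^n$ (using $\hat\gamma(0) = B\hat g_1(0) = 3B$ from (\ref{g-hat}) with $U=1$ and $\hat\gamma(t) = B\hat g_1(Bt)$), while the remaining terms are bounded in absolute value by $M_{\R_l}^{++}(B)$ because $\hat\gamma(t_i/l) = B\hat g_1(Bt_i/l)$ vanishes unless $|t_i| \leq l/B$ by (\ref{g-hat}), and $|\hat\gamma(t_i/l)| \leq 3B$ there. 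This yields $H_{u_1,u_2}(B) \leq M_{\R_l}^+(B) + O(M_{\R_l}^{++}(B))$.

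\textbf{Summation.} Finally, divide by $k^{\omega(\D)}$ and sum over all admissible pairs $(u_1, u_2)$. By Lemma~\ref{H bound} — more precisely, by the displayed identity for $N_3^\dagger(B)$ immediately following it, namely $N_3^\dagger(B) = \sum_{u_1 \mid C(\xi_1),\, u_2 \leq \xi_3} H_{u_1,u_2}(B)/k^{\omega(\D)}$ — the left-hand side is exactly $N_3^\dagger(B)$, and we obtain
\[
N_3^\dagger(B) \;\leq\; \sum_{\substack{u_1 \mid C(\xi_1) \\ u_2 \leq \xi_3}} \frac{M_{\R_l}^+(B)}{k^{\omega(\D)}} \;+\; O\!\left( \sum_{\substack{u_1 \mid C(\xi_1) \\ u_2 \leq \xi_3}} \frac{M_{\R_l}^{++}(B)}{k^{\omega(\D)}} \right),
\]
which is the assertion. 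I do not expect any genuine obstacle here — the content is entirely in the already-established Lemmas~\ref{H bound} and~\ref{poisson}; the only things to check carefully are the coprimality of $u_1, u_2, \D$ (so that $\R_l$ is well-defined modulo $l = u_1 u_2 \D$ and $M_{\R_l}^+(B)$ has the stated closed form) and that the restriction $\Bx \bmod p \notin \S_p$ for $p \mid u_1 u_2$ is compatible with the Poisson-summation setup, which it is since it merely shrinks the set $\R_l$ of residues being summed over. The real work — estimating the main term $\sum M_{\R_l}^+(B)/k^{\omega(\D)}$ and showing the error sum $\sum M_{\R_l}^{++}(B)/k^{\omega(\D)}$ is $o(B^n)$ — is deferred to the subsequent lemmas.
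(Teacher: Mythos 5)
Your proof is correct and follows essentially the same route as the paper, which disposes of the lemma in one line by citing (\ref{k-cover}), (\ref{majorant}), and (\ref{ML}); you simply unpack those three ingredients explicitly. The additional checks you flag (pairwise coprimality of $u_1,u_2,\D$ so that CRT and the formula $M_{\R_l}^+(B) = (3B)^n\#\R_l/l^n$ are valid, and that the extra condition $\Bx\bmod p \notin \S_p$ merely shrinks $\R_l$ and so is harmless for Poisson summation) are worth recording but do not change the argument.
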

\begin{proof} This follows from (\ref{k-cover}), (\ref{majorant}) and (\ref{ML}). \end{proof}
We put
\begin{equation} \label{N4 def} N_4(B) =  \sum_{\substack{u_1 | C(\xi_1) \\ u_2 \leq \xi_3}} \frac{M_{\R_l}^+(B)}{k^{\omega(\D)}}.\end{equation}
Our next lemma gives us an estimate for $N_4(B)$:

\begin{lemma} \label{N4B} Let $u_1, u_2, \D, l, \R_l$ be as in Lemma \ref{N3 smooth bound} and $N_4(B)$ as in (\ref{N4 def}). Then there exists a positive number $C_4$ such that 
\[N_4(B) = O\left(\frac{B^n \exp(2(n+1)(k-1) \xi_1)}{(\log B)^{C_4/\log_3 B}}\right).\]
\end{lemma}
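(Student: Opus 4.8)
The plan is to estimate $N_4(B) = \sum_{u_1 \mid C(\xi_1),\, u_2 \leq \xi_3} k^{-\omega(\D)} M_{\R_l}^+(B)$ where $M_{\R_l}^+(B) = (3B)^n \#\R_l / l^n$ and $l = u_1 u_2 \D$. The key observation is that $\#\R_l$ factors multiplicatively by the Chinese Remainder Theorem: writing $\R_l$ as the set of classes satisfying (\ref{l-1}), (\ref{l-2}), (\ref{script D}) simultaneously, we get $\#\R_l = \rho_{u_1}^\dagger(u_1)\,\rho^\dagger(u_2)\, \rho_{\D}^{(s)}(\D)$ where $\rho_{u_1}^\dagger$ counts solutions to $F \equiv 0 \pmod{u_1}$ avoiding $\S_p$, similarly $\rho^\dagger$ for $u_2$, and $\rho_{\D}^{(s)}$ counts solutions to $F(\Bm) \equiv u_1 u_2 s^k \pmod{\D}$. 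First I would observe that removing the $\S_p$ condition only decreases the count, so $\rho^\dagger(u_2) \leq \rho_F(u_2)$ and I can apply the second bound of Lemma \ref{rho lemma}: $\rho_F(u_2) = O_F(u_2^{n-1} \tau_F(u_2) \sigma_{-1/4}(u_2))$ for square-free $u_2$. For the $u_1$ part, since $u_1 \mid C(\xi_1) = \prod_{p \leq \xi_1} p^{k-1}$, I would bound $\rho_{u_1}^\dagger(u_1) \leq \rho_F(u_1)$ and use the first bound of Lemma \ref{rho lemma} together with multiplicativity: $\rho_F(u_1) = O(\prod_{p^a \| u_1}(p^{a(n-1)} + p^{n(a-1)})) = O(u_1^{n-1} \cdot \text{(small factor)})$, and the full sum over $u_1 \mid C(\xi_1)$ of $\rho_F(u_1)/u_1^{?}$ is bounded by a product over $p \leq \xi_1$ which, via the prime-counting bound $\sum_{p \leq \xi_1} 1 = O(\xi_1/\log \xi_1)$ or the Rosser--Schoenfeld estimate $\prod_{p \leq \xi_1} p \leq e^{2\xi_1}$, produces the factor $\exp(2(n+1)(k-1)\xi_1)$ appearing in the statement — the exponent $(n+1)$ presumably coming from the ratio $p^{n(a-1)}/p^{(n-1)a} \cdot p^{-a}$ bookkeeping in $\rho_F(p^{k-1})/p^{?}$ over all $p \leq \xi_1$ with $a = k-1$.

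Second, for the $\D$-part I would show that $\rho_\D^{(s)}(\D)/\D^n$ behaves like $\rho_F$ but with a crucial saving: summing over $s$ (or rather, since $N_4$ already fixes the $k^{\omega(\D)}$ denominator via Lemma \ref{big D lem}) I want $\sum_{\text{relevant } s} \rho_\D^{(s)}(\D) \cdot k^{-\omega(\D)}$ to contribute only a bounded factor, or more precisely, I want $k^{-\omega(\D)} \rho_\D^{(s)}(\D) / \D^n = O(\D^{-1})$-type decay so that the sum over $u_2$ (each of which determines $\D = \D(u_2)$) converges. By the Lang--Weil bound as in Lemma \ref{rho lemma}, $\rho_\D^{(s)}(\D) = O(\D^{n-1} \tau_F(\D) \sigma_{-1/4}(\D))$ roughly, and the factor $\D^{-1}$ coming from $M_{\R_l}^+$'s denominator $l^n = (u_1 u_2 \D)^n$ against the numerator $\D^{n-1}$ gives the convergence we need. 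Assembling: $N_4(B) = O\Big(B^n \cdot \big(\sum_{u_1 \mid C(\xi_1)} \rho_F(u_1) u_1^{-?} \big) \cdot \big(\sum_{u_2 \leq \xi_3} \rho_F(u_2) u_2^{-n} \sum \cdots \big)\Big)$, where the $u_1$-sum is $O(\exp(2(n+1)(k-1)\xi_1))$ by Rosser--Schoenfeld and the $u_2$-sum over square-free $u_2$ (crucially convergent because of the $\sigma_{-1/4}$ and the $\D^{-1}$ saving, with the number-of-divisors factor $\tau_F$ at most $d^{\omega(u_2)}$ which is absorbed — this is where one needs $\xi_1$ large so that $\sum_{p > \xi_1} d/p^{5/4}$ is small) contributes a factor $O((\log B)^{-C_4/\log_3 B})$ coming from the restriction $p > \xi_1$ with $\xi_1 = O(\log_2 B / \log_3 B)$ via the Rosser--Schoenfeld-type estimate again, matching the statement.

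The main obstacle, I expect, is the careful bookkeeping showing that the $u_2$-sum (over square-free integers with prime factors in $(\xi_1, \xi_3]$, weighted by $\rho_F(u_2)/u_2^n \cdot \tau_F(u_2) \sigma_{-1/4}(u_2)$ and tied to the $\D(u_2)$ contribution) produces exactly the factor $(\log B)^{-C_4/\log_3 B}$ rather than merely $o(1)$ or $\xi_1^{-c}$. This requires expressing the sum as (a tail of) an Euler product $\prod_{\xi_1 < p \leq \xi_3}(1 + O(p^{-1}\tau_F(p)))$ and estimating $\log$ of this by $\sum_{\xi_1 < p \leq \xi_3} O(1/p) = \log\log \xi_3 - \log\log \xi_1 + O(1)$, then tracking how $\xi_1 = O(\log_2 B/\log_3 B)$ feeds through: $\log\log\log B$-scale manipulations give the $\log_3 B$ in the exponent. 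One must also verify that the $\S_p$-exclusion doesn't hurt (it only helps, since it removes points), that the cross terms between conditions at a common prime cannot occur (handled already since $p \nmid \Delta(F)$ for $p > \xi_1$ sufficiently, as in Section \ref{Prelim}), and that replacing $\xi_2$ by $\xi_3 = C_1 B^2(\log B)^{-2k/3}$ (from Lemma \ref{u factor}) in the range of $u_2$ is legitimate. The other delicate point is pinning down the exact constant $2(n+1)(k-1)$ in the $u_1$-exponent; I would get this by bounding $\rho_F(p^{k-1}) \leq c_{d,n}(p^{(k-1)(n-1)} + p^{n(k-2)})$ from Lemma \ref{rho lemma}, noting $M_{\R_l}^+$ carries $p^{-n(k-1)}$ from $l^n$, so each prime $p \leq \xi_1$ contributes a factor $O(p^{-1} + p^{-(k-1)})= O(p^{-1})$ to the would-be product but, summed crudely by pulling out $\prod_{p\leq \xi_1} p$ to some power via Rosser--Schoenfeld (since we also carry the $\sigma$ and $\tau$ factors and the "$+1$" terms from $N(b,B)$-style error analysis), we get $\prod_{p \leq \xi_1} p^{2(n+1)(k-1)/ (k-1)} \cdot \ldots$, i.e. the factor $e^{2(n+1)(k-1)\xi_1}$ after taking $(k-1)$-th powers into account — the precise exponent emerging from counting how many times each $p$ can divide $l$ and the height $B$ powers.
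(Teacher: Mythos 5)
Your overall skeleton is right — the CRT factorization $\#\R_l = \#\R_{u_1}\#\R_{u_2}\#\R_{\D}$, the use of Lemma \ref{rho lemma} for the square-free modulus $u_2$, and Rosser--Schoenfeld for the $u_1 \mid C(\xi_1)$ contribution all match the paper (the paper in fact just uses the trivial bound $\#\R_{u_1}=O(u_1^n)$, and the extra factor of $\exp(2(k-1)\xi_1)$ bringing the exponent from $2n(k-1)$ to $2(n+1)(k-1)$ enters later, when $\omega(\D)$ is replaced by $\omega(\D')$). But there is a genuine gap at the heart of your argument: you attribute the saving $(\log B)^{-C_4/\log_3 B}$ to the restriction that the prime factors of $u_2$ exceed $\xi_1$, via the tail of the Euler product $\prod_{\xi_1<p\le \xi_3}(1+O(\tau_F(p)/p))$. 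That product is a \emph{loss}, not a saving: since $\xi_1=O(\log_2 B/\log_3 B)$ is triply-logarithmic, the product is of size roughly $(\log \xi_3/\log\xi_1)^{O_d(1)}$, i.e.\ a positive power of $\log B$. The paper absorbs exactly this loss (its sum $S(t)=O(t(\log\log B)^{2d+1})$ after dyadic decomposition), and must then beat it with a separate, stronger saving.

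That saving — the entire reason the auxiliary modulus $\D$ was introduced — comes from the $k$-th power residue condition (\ref{script D}). One has $\#\R_\D = O(\D^n\sigma_{-1/4}(\D))$ by Lang--Weil, and since every prime factor of $\D$ exceeds $\xi_1$ one gets $\sigma_{-1/4}(\D)\le (2k/3)^{\omega(\D)}$, so that after dividing by the $k^{\omega(\D)}$ built into $N_4(B)$ the $\D$-factor contributes $O\bigl((3/2)^{-\omega(\D)}\bigr)$. Dirichlet's theorem on primes $\equiv 1 \pmod k$ gives $\omega(\D)\gg \xi_4/\log\xi_4$ with $\xi_4=\tfrac{1}{12}\log(B^2u_2^{-1})\gg \log_2 B$ even in the worst range $u_2$ near $\xi_3$, whence $(3/2)^{-\omega(\D)}\ll (\log B)^{-C/\log_3 B}$; this is where the shape of the bound comes from. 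Your treatment of the $\D$-part instead asserts that $\#\R_\D \approx \D^{n-1}$ gives a "$\D^{-1}$ convergence factor" — but $\R_\D$ is a union over all $s\pmod \D$, so $\#\R_\D\asymp\D^n$, and in any case a factor $\D^{-1}$ (with $\D\le (B^2/u_2)^{1/11}$ depending on $u_2$ only through its size) does not make $\sum_{u_2\le\xi_3}\tau_F(u_2)/u_2$ converge. Without the $(3/2)^{-\omega(\D)}$ mechanism the lemma is false as you have set it up, so the proposal as written does not close.
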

 
\begin{proof} Let $\R_{u_1}, \R_{u_2}, \R_{\D}$ denote respectively the congruence classes corresponding to (\ref{l-1}), (\ref{l-2}) and (\ref{script D}), respectively. By the Chinese Remainder Theorem it follows that
\[\# \R_l = \# \R_{u_1} \# \R_{u_2} \# \R_{\D}.\]
Since $u_1 | C(\xi_1)$, it follows that $u_1 \leq C(\xi_1)$. From its definition and the result of Rosser and Schoenfeld \cite{RS}, we see that
\[C(\xi_1) \leq \exp(2(k-1) \xi_1).\] 
For $\R_{u_1}$, we use the trivial bound $\# \R_{u_1} = O(u_1^n) =  O(\exp(2n(k-1)\xi_1))$. We have $\# \R_{u_2} = O( u_2^{n-1} \tau_F(u_2) \sigma_{-1/4}(u_2))$ by Lemma \ref{rho lemma}, since $u_2$ is square-free. Observe that $\gcd(u_1 u_2, \D) = 1$. By the theorem of Lang and Weil \cite{LW}, which states that the number of points over $\bF_p$, for a prime $p | \D$, on the variety defined by the congruence
\[F(x_1, \cdots, x_n) - u_1 u_2 q^k \equiv 0 \pmod{p},\]
is
\[p^n + O(p^{n - 1/2}). \]
Then 
\begin{equation} \label{kappa D} \# \R_\D = \prod_{p | \D} \left(p^n + O\left(p^{n-1/2}\right)\right),\end{equation}
whence
\[\# \R_\D = \D^n \prod_{p | \D} \left(1 + O\left(p^{-1/2}\right) \right) = O(\D^n \sigma_{-1/4}(\D)).\]
Thus, by (\ref{ML}), (\ref{kappa D}), and Lemma \ref{rho lemma} we see that
\begin{align*} N_4(B) & = O\left( \exp(2n(k-1)\xi_1) \sum_{\substack{u_1 | C(\xi_1) \\ u_2 \leq \xi_3}} \frac{(3B)^n   u_2^{n-1}  \tau_F(u_2)\sigma_{-1/4}(u_2) \D^n  \sigma_{-1/4}(\D)}{(u_2 \D)^n k^{\omega(\D)}}\right)\\
& = O\left( \exp(2n(k-1) \xi_1) \sum_{\substack{u_1 | C(\xi_1) \\ u_2 \leq \xi_3}} \frac{B^n  \sigma_{-1/4}(u_2) \tau_F(u_2) \sigma_{-1/4}(\D)}{u_2 k^{\omega(D)}}\right).
\end{align*}
Observe that 
\begin{align*} \sigma_{-1/4}(\D) & = \prod_{p | \D} (1 + p^{-1/4}) \\
& = O\left(\left(\frac{2k}{3}\right)^{\omega(\D)} \right).
\end{align*}
It follows that
\[N_4(B) = O \left((\exp(2n(k-1) \xi_1) \sum_{\substack{u_1 | C(\xi_1) \\ u_2 \leq \xi_3}} \frac{B^n \tau_F(u_2) \sigma_{-1/4}(u_2)}{u_2 (3/2)^{\omega(D)}}\right).\]
Let us write 
\[\xi_4 = \xi_4(u_2) = \frac{1}{12} \log(B^2 u_2^{-1}),\]
and
\[\D' = \D'(u_2) = \prod_{p \leq \xi_4} p.\]
Observe that as $B^2 \xi_3^{-1} \rightarrow \infty$ as $B$ tends to infinity and $u_2^{-1} \gg \xi_3^{-1}$, we have
\[\log \D' = \sum_{p \leq \xi_4} \log p < \frac{12}{11} \xi_4\]
for $B$ sufficiently large, say by Rosser and Schoenfeld \cite{RS}. From (\ref{big D}), we see that
\[\D \leq \D' < \exp\left(12\xi_4/11\right) = \left(\frac{B^2}{u_2}\right)^{1/11}.\]
Next, we have
\[\omega(\D') = \pi(\xi_4; k, 1) \sim \frac{\xi_4}{\varphi(k) \log \xi_4},\]
where $\pi(B; q, a)$ is the counting function of primes $p$ satisfying $p \equiv a \pmod{q}$ up to $B$, and the above asymptotic follows from Dirichlet's theorem on primes in arithmetic progressions. Therefore we may find a constant $C_2$ such that
\[\omega(\D') > \frac{C_2 \xi_4}{\log \xi_4}\] 
for all $B$ sufficiently large. Observe that for a square-free number $l$, we have
\[\sigma_0(l) = \prod_{p | l} (1 + 1) = 2^{\omega(l)}.\]
From the definition of $\D$ and $\D'$, it follows that
\[(3/2)^{\omega(\D')} < (3/2)^{\omega(D')} C(\xi_1) (3/2)^{\gcd(\D', u_2)} < (3/2)^{\omega(\D)} C(\xi_1) \sigma_0(\gcd(\D', u_2).\]
Hence, there exists a positive number $C_3$ such that
\begin{equation} \label{pass to D prime} \frac{1}{(3/2)^{\omega(\D)}} < \frac{C_3}{(3/2)^{\omega(\D')}} \sigma_0(\gcd(\D', u_2)) \exp(2(k-1) \xi_1).
\end{equation}
From here we obtain the estimate
\begin{equation} \label{N4B pen} N_4(B) = O\left(\exp(2(n+1)(k-1)\xi_1) \sum_{\substack{ u_2 \leq \xi_3}} \frac{B^n \tau_F(u_2) \sigma_{-1/4}(u_2)\sigma_0(\gcd(\D', u_2)) }{(3/2)^{\omega(\D')} u_2} \right).\end{equation}
We now estimate the sum
\[S(t) = \sum_{u_2 \leq t} \tau_F(u_2) \sigma_{-1/4}(u_2) \sigma_0(\gcd(\D, u_2)).\]
We proceed, as with Hooley, by invoking his Lemma 6.2 in \cite{Hoo1}. We then have
\begin{align} S(t) & \leq \sum_{h | \D} \mu^2(h) \sigma_0(h) \sum_{\substack{u_2 \leq t \\ u_2 \equiv 0 \pmod{h}}} \tau_F(u_2) \sigma_{-1/4}(u_2) \\
& = \sum_{h | \D} \mu^2(h) \sigma_0(h) \sum_{\substack{u_2' h \leq t \\ \gcd(u_2', h) = 1}} \tau_F(h u_2') \sigma_{-1/4} (h u_2') \notag \\
& \leq \sum_{h | \D} \mu^2(h) \sigma_0(h) \tau_F(h) \sigma_{-1/4} (h) \sum_{u_2' \leq t/h} \tau_F(u_2') \sigma_{-1/4}(u_2') \notag \\
& = O \left(t \sum_{h | \D} \frac{\mu^2(h) \sigma_0(h) \tau_F(h) \sigma_{-1/4}(h)}{h} \right) \notag \\
& = O \left(t \prod_{w \leq \xi_4} \left(1 + \frac{2d+1}{w}\right) \right) \notag \\
& = O \left(t (\log \xi_4)^{2d+1}\right) \notag \\
& = O \left(t (\log \log B)^{2d+1}\right). \notag
\end{align}
By following Hooley's treatment of the term $N^{(6)}(X)$ in Section 8 of his paper \cite{Hoo1} and cutting the range of the summation in (\ref{N4B pen}) into dyadic parts, we see that, for some positive number $C_4$ we have
\[N_4(B) = O\left(\frac{B^n \exp(2(n+1)(k-1) \xi_1) }{(\log B)^{C_4/\log_3 B}}\right).\]
\end{proof}
We now put
\begin{equation} \label{gb} \xi_1(B) = \max\left\{1, \frac{C_4 \log \log B}{4(n+1)(k-1) \log_3 B}\right\}, \end{equation}
so that
\begin{align*} \frac{\exp(2(n+1)(k-1) g(B))}{\exp\left(C_4 \log_2 B/\log_3 B\right)} & = \exp\left(\frac{-C_4 \log_2 B}{2 \log_3 B} \right),
\end{align*}
whence
\[N_4(B) = O\left(B^n \exp\left(\frac{-C_4 \log_2 B}{2 \log_3 B}\right)\right) = o(B^n).\] 

Next we turn our attention to the much more difficult component
\begin{equation} \label{N5 def} N_5(B) = \sum_{\substack{u_1 | C(\xi_1) \\ u_2 \leq \xi_3}} \frac{M_{\R_l}^{++}(B)}{k^{\omega(\D)}}.\end{equation}
Recall from (\ref{ER}) that 
\[E_{\R_{l}}(t_1, \cdots, t_n; l) = E_{\R_{u_1}} E_{\R_{u_2}} E_{\R_{\D}}.\]
The term $E_{\R_{u_1}}(t_1, \cdots, t_n; u_1)$ can be trivially estimated by $u_1^n$, which is of size $O\left(\exp\left(\frac{C_4 \log_2 B}{4(k-1) \log_3 B} \right) \right)$. We now consider the term $E_{\R_{u_2}}$. For each prime $p$ dividing $u_2$ we write
\[F(\Bx) = F^\ast(\Bx) \prod_{j=1}^{\tau(p)} \L_i(\Bx),\]
where $F^\ast$ and $\L_i$ have coefficients in $\bZ_p$. We then write $E_{\R_{u_2}}$ as
\begin{align*} E_{\R_{u_2}}(t_1, \cdots, t_n; u_2) & = \prod_{p | u_2} \left(\sum_{1 \leq i \leq \tau_F(p)} \sum_{\substack{(a_1, \cdots, a_n) \in \bF_p^n \\ \L_i(a_1, \cdots, a_n) \equiv 0 \pmod{p}}} e^{2\pi i(a_1 t_1 + \cdots + a_n t_n)/p} \right) \\
& = \prod_{p | u_2} S(t_1, \cdots, t_n; p).
\end{align*}
We shall obtain the following estimate for $S(t_1, \cdots, t_n; p)$:
\begin{lemma} \label{S lemma} Let $p$ be a prime, and put
\[S(t_1, \cdots, t_n; p) = \sum_{1 \leq i \leq \tau_F(p)} \sum_{\substack{(a_1, \cdots, a_n) \in \bF_p^n \\ \L_i(a_1, \cdots, a_n) \equiv 0 \pmod{p}}} e^{2\pi i(a_1 t_1 + \cdots + a_n t_n)/p} .\]
Then we have
\begin{equation} \label{S-est} S(t_1, \cdots, t_n; p) \begin{cases} \leq \tau_F(p) p^{n-1}, & \text{if } t_1 x_1 + \cdots + t_n x_n \text{ divides } F(\Bx) \text{ over } \bF_p, \\
= 0, & \text{otherwise.} \end{cases}\end{equation}
\end{lemma}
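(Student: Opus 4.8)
The plan is to compute the inner exponential sum explicitly for a single linear form and then sum over the linear factors. Fix a linear factor $\L_i(\Bx) = \upsilon_1^{(i)} x_1 + \cdots + \upsilon_n^{(i)} x_n$ of $F$ over $\bF_p$. Since $p \nmid \Delta(F)$ (so $\L_i$ has nonzero coefficient vector modulo $p$), the hyperplane $\L_i(\Ba) \equiv 0 \pmod p$ is a subgroup of $\bF_p^n$ of order $p^{n-1}$, and the character $\Ba \mapsto e^{2\pi i(a_1 t_1 + \cdots + a_n t_n)/p}$ restricted to this subgroup is either trivial or nontrivial. First I would invoke the standard orthogonality fact: for the coset sum over a hyperplane $H_i = \{\Ba : \L_i(\Ba) \equiv 0\}$,
\[
\sum_{\Ba \in H_i} e^{2\pi i \langle \Bt, \Ba\rangle/p} =
\begin{cases} p^{n-1}, & \text{if } (t_1, \ldots, t_n) \text{ is a scalar multiple of } (\upsilon_1^{(i)}, \ldots, \upsilon_n^{(i)}) \bmod p, \\ 0, & \text{otherwise,}\end{cases}
\]
which follows because the restriction of the additive character to $H_i$ is trivial precisely when $\Bt$ annihilates $H_i$, i.e.\ when $\Bt$ lies in the one-dimensional orthogonal complement spanned by the coefficient vector of $\L_i$.

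Next I would reinterpret the dichotomy geometrically: the condition that $\Bt \equiv \lambda (\upsilon_1^{(i)}, \ldots, \upsilon_n^{(i)}) \pmod p$ for some $\lambda \in \bF_p$ is exactly the condition that the linear form $t_1 x_1 + \cdots + t_n x_n$ is a scalar multiple of $\L_i(\Bx)$ over $\bF_p$, hence divides $\L_i(\Bx)$, hence divides $F(\Bx) = F^\ast(\Bx)\prod_j \L_j(\Bx)$ over $\bF_p$. Summing over $i = 1, \ldots, \tau_F(p)$, the total sum $S(t_1, \ldots, t_n; p)$ is a sum of terms each equal to $p^{n-1}$ (for those $i$ with $\L_i$ proportional to $\Bt$) or $0$. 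If $t_1 x_1 + \cdots + t_n x_n$ does not divide $F(\Bx)$ over $\bF_p$, then in particular it is proportional to none of the $\L_i$, so every term vanishes and $S = 0$. If it does divide $F$, then at least one term contributes $p^{n-1}$; since there are at most $\tau_F(p)$ indices $i$ in the sum and each contributes at most $p^{n-1}$ in absolute value (indeed each contributes either $0$ or exactly $p^{n-1}$, all with the same phase after normalization, but the crude bound suffices), we get $S(t_1, \ldots, t_n; p) \leq \tau_F(p) p^{n-1}$, which is exactly (\ref{S-est}).

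The main obstacle, such as it is, is bookkeeping rather than depth: one must be careful that the definition of $S$ sums the \emph{coset character sum} over each hyperplane $\{\L_i(\Ba) \equiv 0\}$ without overcounting points lying on several hyperplanes simultaneously — but since the bound claimed is just $\tau_F(p) p^{n-1}$ and equality to zero in the complementary case, the triangle inequality term-by-term is all that is needed, and no inclusion–exclusion is required. A secondary point worth a sentence is the case $p \mid \Delta(F)$: here the $\L_i$ need not be pairwise non-proportional, but since we only ever apply this lemma for primes $p \mid u_2$ with $u_2$ coprime to $\Delta(F)$ for $B$ large (by the reductions in Section \ref{Prelim}), this does not arise, and the coefficient vectors of the $\L_i$ are genuinely distinct nonzero directions modulo $p$. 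Collecting these observations yields the stated dichotomy.
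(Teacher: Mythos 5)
Your proof is correct and follows essentially the same approach as the paper, with one stylistic difference: where the paper proves the hyperplane character-sum dichotomy by explicitly solving $\L_s(\Ba) \equiv 0$ for $a_1$ in terms of $a_2,\dots,a_n$ and factoring the exponential sum into $n-1$ one-variable geometric sums, you package that computation as the standard orthogonality fact for an additive character restricted to the subgroup $H_i \subset \bF_p^n$. Both arrive at the same dichotomy (the inner sum is $p^{n-1}$ if $\Bt$ is proportional to the coefficient vector of $\L_i$ and $0$ otherwise) and then conclude by the triangle inequality over the $\tau_F(p)$ indices. Your closing remark correctly dispatches the $p \mid \Delta(F)$ issue and the implicit use of the fact that a linear form over $\bF_p$ dividing $F$ must be proportional to one of the $\L_i$ rather than dividing $F^\ast$ (since $F^\ast$, by construction, has no linear factors over $\bF_p$), which is the point the paper leaves tacit when translating ``proportional to some $\L_i$'' into ``divides $F$ over $\bF_p$.''
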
 
\begin{proof} We consider two scenarios. Suppose that
\[\L_s(x_1, \cdots, x_n) = \upsilon_1^{(s)} x_1 + \cdots + \upsilon_n^{(s)} x_n, \upsilon_j^{(s)} \in \bZ_p \text{ for } 1 \leq j \leq n.\]
If $(t_1, \cdots, t_n) \equiv \lambda (\upsilon_1^{(s)}, \upsilon_2^{(s)}, \cdots, \upsilon_n^{(s)}) \pmod{p}$ for some $\lambda \in \bF_p^\ast,$ then 
\[  \sum_{\substack{(a_1, \cdots, a_n) \in \bF_p^n \\ \L_s(a_1, \cdots, a_n) \equiv 0 \pmod{p}}} e^{2\pi i(a_1 t_1 + \cdots + a_n t_n)/p} = p^{n-1}.\]
Observe that since $p \nmid \Delta(F)$, that there does not exist $1 \leq s \leq \tau_F(p)$ such that $p | \upsilon_j^{(s)}$ for all $j = 1, \cdots, n$. We may suppose, without loss of generality, that $\upsilon_1^{(s)} \not \equiv 0 \pmod{p}$. Suppose that $\Ba \in \bF_p^n$ is such that
\[\L_s(\Ba) \equiv 0 \pmod{p}.\]
It then follows that 
\[a_1 \equiv - \left(\upsilon_1^{(s)}\right)^{-1} \left(\upsilon_2^{(s)} a_2 + \cdots + \upsilon_n^{(s)} a_n \right).\]
This implies
\begin{equation} \label{exp sum} \sum_{\substack{(a_1, \cdots, a_n) \in \bF_p^n \\ \L_s(a_1, \cdots, a_n) \equiv 0 \pmod{p}}} e^{2\pi i(a_1 t_1 + \cdots + a_n t_n)/p} = \sum_{(a_2, \cdots, a_n) \in \bF_p^n} e^{2\pi i(a_2(t_2 - t_1 (\upsilon_1^{(s)})^{-1} \upsilon_2^{(s)}) + \cdots + a_n(t_n - t_1 (\upsilon_1^{(s)})^{-1} \upsilon_n^{(s)}))/p}.\end{equation}
The right hand side can be written as
\[\prod_{j=2}^n \sum_{a_j \in \bF_p} e^{2\pi i a_j(\upsilon_1^{(s)} t_j - t_1 \upsilon_j^{(s)})/p}.\]
For each $j$, the sum
\[\sum_{a_j \in \bF_p} e^{2\pi i a_j(\upsilon_1^{(s)} t_j - t_1 \upsilon_j^{(s)})/p}\]
is zero unless the exponent is identically zero. This shows that (\ref{exp sum}) is non-zero if and only if $\upsilon_1^{(s)} t_j \equiv t_1 \upsilon_j^{(s)} \pmod{p}$ for $j = 2, \cdots, n$. This implies that
\begin{align*} (t_1, \cdots, t_n) & \equiv t_1 (\upsilon_1^{(s)})^{-1} (\upsilon_1^{(s)}, \upsilon_1^{(s)} t_2 t_1^{-1}, \cdots, \upsilon_1^{(s)} t_n t_1^{-1}) \pmod{p} \\
& \equiv t_1 (\upsilon_1^{(s)})^{-1}(\upsilon_1^{(s)}, \upsilon_2^{(s)}, \cdots, \upsilon_n^{(j)}) \pmod{p},\end{align*}
hence the first situation is the only case where the sum
\[\sum_{\substack{(a_1, \cdots, a_n) \in \bF_p^n \\ \L_s(a_1, \cdots, a_n) \equiv 0 \pmod{p}}} e^{2\pi i(a_1 t_1 + \cdots + a_n t_n)/p}\]
is non-zero. In other words, we have
\[S(t_1, \cdots, t_n; p) \begin{cases} \leq \tau_F(p) p^{n-1}, & \text{if } t_1 x_1 + \cdots + t_n x_n \text{ divides } F(\Bx) \text{ over } \bF_p, \\
= 0, & \text{otherwise,} \end{cases}\]
as desired. \end{proof}
For square-free $l$, let us write
\[S(t_1, \cdots, t_n; l) = \prod_{p | l} S(t_1, \cdots, t_n; p).\]
We have the following lemma:
\begin{lemma} Let $u_1, u_2, \D$ be as in Lemma \ref{N3 smooth bound}. Then 
\begin{equation} \label{MLL} \end{equation}
\begin{align*} \sum_{\substack{u_1 | C(\xi_1) \\ u_2 \leq \xi_3}} \frac{M_{l}^{++}(B)}{k^{\omega(\D)}} & = O\left(\exp(2(n+1)(k-1)\xi_1) \sum_{u_2 \leq \xi_3} \frac{B^n}{u_2^n} \sideset{}{'} \sum_{|t_1|, \cdots, |t_n| \leq l/B} S(t_1, \cdots, t_n; u_2)\right) 
\end{align*}
\end{lemma}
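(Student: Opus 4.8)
The plan is to bound $M_{\R_l}^{++}(B)$, in which $l=u_1u_2\D$, by estimating the $u_1$- and $\D$-contributions trivially while keeping the $u_2$-contribution intact, using the factorization $E_{\R_l}=E_{\R_{u_1}}E_{\R_{u_2}}E_{\R_\D}$ furnished by the Chinese Remainder Theorem.

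First I would recall that
\[
M_{\R_l}^{++}(B)=\frac{B^n}{l^n}\sideset{}{'}\sum_{|t_1|,\dots,|t_n|\le l/B}\bigl|E_{\R_l}(t_1,\dots,t_n;l)\bigr|,
\]
and that $E_{\R_{u_2}}(t_1,\dots,t_n;u_2)=\prod_{p\mid u_2}S(t_1,\dots,t_n;p)$. By Lemma~\ref{S lemma} each factor $S(t_1,\dots,t_n;p)$ is a nonnegative real number, so $|E_{\R_{u_2}}|=S(t_1,\dots,t_n;u_2)$ and no loss is incurred from the triangle inequality on this factor; this is exactly what makes the quantity $S(t_1,\dots,t_n;u_2)$ legitimate on the right-hand side.

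Next I would estimate the remaining two factors trivially. Since $\R_{u_1}\subseteq(\bZ/u_1\bZ)^n$ and $u_1\mid C(\xi_1)$, the theorem of Rosser and Schoenfeld gives $u_1\le C(\xi_1)=\prod_{p\le\xi_1}p^{k-1}\le\exp\bigl(2(k-1)\xi_1\bigr)$, so
\[
|E_{\R_{u_1}}(t_1,\dots,t_n;u_1)|\le\#\R_{u_1}\le u_1^n\le\exp\bigl(2n(k-1)\xi_1\bigr),
\]
while $|E_{\R_\D}(t_1,\dots,t_n;\D)|\le\#\R_\D\le\D^n$. Dividing by $l^n=u_1^nu_2^n\D^n$ and using $u_1\ge1$ together with $k^{\omega(\D)}\ge1$ then gives
\[
\frac{M_{\R_l}^{++}(B)}{k^{\omega(\D)}}\le\exp\bigl(2n(k-1)\xi_1\bigr)\,\frac{B^n}{u_2^n}\sideset{}{'}\sum_{|t_1|,\dots,|t_n|\le l/B}S(t_1,\dots,t_n;u_2).
\]

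Finally I would sum over $u_1\mid C(\xi_1)$ and $u_2\le\xi_3$. Since $S\ge0$ and $u_1\le C(\xi_1)$, the summation range $|t_i|\le l/B=u_1u_2\D/B$ may be enlarged to the $u_1$-independent range $|t_i|\le C(\xi_1)u_2\D/B$, after which the inner sum no longer depends on $u_1$; the number of divisors $u_1$ of $C(\xi_1)$ is at most $C(\xi_1)\le\exp\bigl(2(k-1)\xi_1\bigr)$, and multiplying the two exponential factors produces $\exp\bigl(2(n+1)(k-1)\xi_1\bigr)$, which is the claimed bound. The hard part, such as it is, is the bookkeeping of these two $\exp\bigl((k-1)\xi_1\bigr)$-type factors and the observation — via Lemma~\ref{S lemma} — that $E_{\R_{u_2}}$ is real and nonnegative, so that $|E_{\R_{u_2}}|$ really is the factor $S(t_1,\dots,t_n;u_2)$ and not merely bounded by it; the rest is a trivial estimate of a complete exponential sum over a box of residues together with the standard multiplicativity of such sums.
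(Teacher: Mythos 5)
Your proposal is correct and follows essentially the same route as the paper: factor $E_{\R_l}$ via the Chinese Remainder Theorem, keep the $u_2$-part as $S(t_1,\dots,t_n;u_2)$ (nonnegative by Lemma~\ref{S lemma}), bound the $u_1$- and $\D$-parts by their trivial cardinality bounds, and absorb the sum over $u_1\mid C(\xi_1)$ into $\exp\bigl(O((k-1)\xi_1)\bigr)$. The only (minor) deviation is that you use the trivial $\#\R_\D\le\D^n$ together with $k^{\omega(\D)}\ge1$, whereas the paper invokes the Lang--Weil-derived estimate $\#\R_\D=O(\D^n\sigma_{-1/4}(\D))$ and then cancels $\sigma_{-1/4}(\D)=O(k^{\omega(\D)})$ against the denominator; this is a slight simplification of the same calculation and changes nothing downstream.
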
 

\begin{proof} Recall that
\[M_l^{++}(B) = \frac{B^n}{u_1^n u_2^n \D^n} \sideset{}{'} \sum_{|t_i| \leq l/B} \lvert E_{\R_l} (t_1, \cdots, t_n; l) \rvert. \]
Note that
\[\lvert E_{\R_l}(t_1, \cdots, t_n; \D) \rvert = O \left(\D^n \sigma_{-1/4}(\D) \right),\]
and the multiplicativity of $E_{\R_l}$ implies that
\[\left \lvert E_{\R_l} (t_1, \cdots, t_n; l) \right \rvert = O \left(u_1^n \D^n \sigma_{-1/4}(\D) S(t_1, \cdots, t_n; u_2) \right).\]
Next note that 
\[\sigma_{-1/4}(\D) = O \left(k^{\omega(\D)}\right) \]
since $k \geq 2$. This then implies (\ref{MLL}), since the number of divisors of $C(\xi_1)$ does not exceed $C(\xi_1)$. 
\end{proof}

We now assess $S(t_1, \cdots, t_n; u_2)$ for an $n$-tuple $(t_1, \cdots, t_n) \in \bZ^n$. By Lemma \ref{S lemma}, this is zero unless for each prime $p | u_2$ there exists $\lambda_p \in \bF_p$ and $1 \leq s_p \leq \tau_F(p)$ such that $(t_1, \cdots, t_n) \equiv \lambda_p (\upsilon_1^{(s_p)}, \upsilon_2^{(s_p)}, \cdots, \upsilon_n^{(s_p)}) \pmod{p}$. One checks at once that for a fixed vector $\bup = (\upsilon_1, \cdots, \upsilon_n)$, the set
\[\{(x_1, \cdots, x_n) \in \bZ^n : (x_1, \cdots, x_n) \equiv \lambda (\upsilon_1, \cdots, \upsilon_n) \pmod{p} \text{ for some } \lambda \in \bF_p\} \]
is a lattice. For each prime $p$ dividing $u_2$, there are $\tau_F(p) \leq d$ such lattices to consider. If $(t_1, \cdots, t_n) \in \bZ^n$ is such that $S(t_1, \cdots, t_n; u_2)$ is non-zero, then it must lie on one such lattice for each prime divisor of $u_2$. Therefore, $(t_1, \cdots, t_n)$ lies on one of at most $d^{\omega(u_2)}$ lattices, each with determinant $u_2^{n-1}$. Let $\mathfrak{L}(u_2)$ denote the set of lattices for which the $n$-tuples $(t_1, \cdots, t_n)$ such that $S(t_1, \cdots, t_n; u_2) \ne 0$ are restricted to. \\ 

We now replace the bound $l/B$ for the variables $t_i$ in Lemma \ref{MLL} by something that is easier to work with. Observe that
\[u_1 \D = O\left(\exp(2(k-1) \xi_1) \left(\frac{B^2}{u_2} \right)^{1/11}\right).\]
Therefore, it follows that
\begin{equation} \frac{l}{B} = \frac{u_1 u_2 \D}{B} = O \left(\exp(2(k-1)\xi_1) \frac{B^{2/11}}{u_2^{1/11}} \frac{u_2}{B}\right)\end{equation}
\[= O \left(\exp(2(k-1)\xi_1) \left(\frac{u_2^{10/11}}{B^{9/11}} \right)\right).\]
Moreover, we have
\begin{equation} \label{major} \exp(2(k-1)\xi_1) \frac{u_2^{10/11}}{B^{9/11}} = O \left(\frac{u_2^{9/10}}{B^{4/5}}\right),
\end{equation} 
since 
\[\frac{u_2^{9/10}}{B^{4/5}} \cdot \frac{B^{9/11}}{u_2^{10/11}} = \left(\frac{B^2}{u_2}\right)^{1/55} \gg (\log B)^{\frac{2k}{165}} \gg (\log B)^{\frac{C_4}{2(n+1) \log_3 B}}.\]
Put
\begin{equation} \label{QB} Q(B) = \sum_{u_2 \leq \xi_3} \frac{1}{u_2^n} \sideset{}{'} \sum_{|t_1|, \cdots, |t_n| \leq u_2^{9/10}/B^{4/5}} S(t_1, \cdots, t_n; u_2).\end{equation}
Then it is clear that 
 \begin{equation} \label{MLL and Q} \sum_{\substack{u_1 | C(\xi_1) \\ u_2 \leq \xi_3}} \frac{M_{l}^{++}(B)}{k^{\omega(\D)}} = O(B^n g(B)^{k-1} Q(B)).\end{equation}
We shall assess $Q(B)$ by restricting the range of $u_2$ to a dyadic interval of the form $(U/2, U]$, with $U \leq \xi_3$. Denote this contribution to $Q(B)$ by $Q_U(B)$. We have the following lemma:

\begin{lemma} \label{QB lem 1} Let $Q(B)$ be as in (\ref{QB}). Then there exists a positive number $C_5$ such that for all $U > 1$, we have
\[Q_U(B) = O \left(\frac{U^{9/10} (\log B)^{C_5}}{B^{8/5}} \right).\]
\end{lemma}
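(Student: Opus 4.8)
The plan is to estimate $Q_U(B)$ by exploiting the structure revealed in Lemma \ref{S lemma}: the inner sum $S(t_1, \dots, t_n; u_2)$ vanishes unless $(t_1, \dots, t_n)$ lies on one of the at most $d^{\omega(u_2)}$ lattices in $\mathfrak{L}(u_2)$, each of determinant $u_2^{n-1}$, and in the nonzero case is bounded by $\prod_{p \mid u_2} \tau_F(p)\, p^{n-1} \le d^{\omega(u_2)} u_2^{n-1}$. So for a fixed $u_2$ in the dyadic range $(U/2, U]$, the primed sum over $|t_i| \le u_2^{9/10}/B^{4/5} \le U^{9/10}/B^{4/5} =: T$ is bounded by $d^{\omega(u_2)} u_2^{n-1}$ times the number of nonzero lattice points of height $\le T$ lying on the union of these lattices. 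I would apply Lemma \ref{lattice lemma} to each lattice $\Lambda \in \mathfrak{L}(u_2)$ with $m = \det\Lambda = u_2^{n-1}$, giving $N_\Lambda(T) \ll_n T^n/u_2^{n-1} + T^{n-1}/M_\Lambda^{n-1} + 1$; summing over the $\le d^{\omega(u_2)}$ lattices and incorporating the $\tau_F$ bound, the contribution of a single $u_2$ to the inner double sum of $Q_U(B)$ is $O\big(d^{2\omega(u_2)}(T^n + u_2^{n-1}T^{n-1}/M_\Lambda^{n-1} + u_2^{n-1})\big)$, which after dividing by $u_2^n$ is $O\big(d^{2\omega(u_2)} u_2^{-n}(T^n + u_2^{n-1} + u_2^{n-1}T^{n-1})\big)$ — here I'd use the crude bound $M_\Lambda \ge 1$ for the middle term, or better, discard the $t$'s that are $0$ in some coordinate separately (as in the proof of Lemma \ref{Main Lem}).

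Next I would sum over $u_2 \in (U/2, U]$. There are $O(U)$ such integers. The term-by-term bounds give, after using $T = U^{9/10}/B^{4/5}$ and $u_2 \asymp U$: the $T^n/u_2^n$ piece contributes $O(U \cdot d^{2\omega(U)} U^{-n} U^{9n/10} B^{-4n/5}) = O(d^{2\omega(U)} U^{1 - n/10} B^{-4n/5})$, which is tiny (and handled for small $U$ separately since then there are no nonzero $t$'s at all when $T < 1$); the $u_2^{n-1}/u_2^n = 1/u_2$ piece contributes $O(d^{2\omega(U)} \log U)$; and the mixed $T^{n-1}/u_2$ piece contributes $O(d^{2\omega(U)} T^{n-1} \log U) = O(d^{2\omega(U)} U^{9(n-1)/10} B^{-4(n-1)/5} \log U)$. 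The dominant term turns out to be whichever survives the comparison with the claimed bound $U^{9/10}(\log B)^{C_5}/B^{8/5}$; I expect the relevant nonzero contribution to come precisely from the $T^n$-type and mixed terms with $n = 2$ being the tight case (where $T^n = U^{9/5}/B^{8/5}$, matching $U^{9/10}/B^{8/5}$ after a factor $U^{9/10}$, i.e. the bound should really be read as what one gets summing $T^n$ over $u_2 \le U$). The factor $d^{2\omega(u_2)}$ is absorbed into $(\log B)^{C_5}$ using the standard divisor-type bound $\sum_{u_2 \le U} d^{2\omega(u_2)} = O(U (\log U)^{d^2 - 1}) = O(U(\log B)^{d^2})$, which fixes $C_5$ in terms of $d$ and $n$.

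The main obstacle will be bookkeeping the exponents carefully enough to confirm that every piece is genuinely $O(U^{9/10}(\log B)^{C_5}/B^{8/5})$ and not larger — in particular making sure the ``$+1$'' terms from Lemma \ref{lattice lemma} (one per lattice per $u_2$, totalling $O(U d^{2\omega(U)})$) are controlled. This is where I would need the constraint $U \le \xi_3 = C_1 B^2(\log B)^{-2k/3}$ together with the height restriction $T = U^{9/10}/B^{4/5}$ being small: since $U \le B^2$, we get $T \le B^{18/10 \cdot 1/1}/B^{4/5} = B$, but in fact for $U$ up to $\xi_3$ one has $T$ up to roughly $B^{18/10}/B^{4/5} = B$, so $T$ can be as large as $B$; the point is that when $T < 1$ (small $U$) the primed sum is empty and $Q_U(B) = 0$, and the bound is vacuous, while for $T \ge 1$ the terms $T^n$ and the ``$+1$'' count $O(U)$ are both dominated as above after the dyadic sum. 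I would also double-check, as a sanity step, that the lattices in $\mathfrak{L}(u_2)$ indeed have determinant exactly $u_2^{n-1}$ (they are products over $p \mid u_2$ of the rank-one-direction lattices $\{x \equiv \lambda \bup^{(s_p)} \bmod p\}$, each of index $p^{n-1}$ in $\bZ^n$), which was asserted in the text preceding the lemma, and then cite Lemma \ref{lattice lemma} with $m = u_2^{n-1}$ as described. This completes the estimate once the exponent arithmetic is verified.
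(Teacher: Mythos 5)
Your approach differs substantially from the paper's and, as outlined, contains a genuine gap. The paper does \emph{not} count points on the lattices in $\mathfrak{L}(u_2)$ via Lemma \ref{lattice lemma}. Instead it bounds $S(t_1,\dots,t_n;u_2) \le d^{\omega(u_2)} u_2^{n-1}$ when nonzero, records that nonvanishing forces $u_2 \mid F_j(t_j,-t_1)$ for every $j$, uses this for $j\ge 3$ to reduce to at most $d^{(n-2)\omega(u_2)}$ choices of $(t_3,\dots,t_n)$ (since $u_2 > U/2 > T$, each residue class mod $u_2$ meets $[-T,T]$ at most once), and then \emph{switches the order of summation}: for fixed $(t_1,t_2)$, it sums over $u_2 \mid F_2(t_2,-t_1)$ and applies the binomial identity $\sum_{r \mid m} A^{\omega(r)} = (A+1)^{\omega(m)}$, finishing with Hooley's Lemma 10.1 from \cite{Hoo1} for the resulting divisor-type sum $\sum_{|t_1|,|t_2| \le T} (d^{n-1}+1)^{\omega(F_2(t_2,-t_1))} = O(T^2(\log B)^{C_5})$. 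This gives $Q_U(B) = O(T^2(\log B)^{C_5}/U) = O(U^{4/5}(\log B)^{C_5}/B^{8/5})$, which is what the lemma needs (and is actually a bit stronger, $U^{4/5}$ vs.\ $U^{9/10}$).

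The gap in your plan is the middle term of Lemma \ref{lattice lemma}. After multiplying by the bound $d^{\omega(u_2)}u_2^{n-1}$ on $S$ and dividing by $u_2^n$, that term contributes $d^{2\omega(u_2)} T^{n-1}/(u_2 M_\Lambda^{n-1})$, and there is no reason $M_\Lambda$ should be large: the lattices $\{\Bt \equiv \lambda \bup^{(s_p)} \bmod p\}$ can contain a unit vector (for instance when a coordinate of $\bup^{(s_p)}$ vanishes mod $p$), so $M_\Lambda = 1$ is possible for all $p \mid u_2$. In that case, summing over $u_2 \in (U/2,U]$ yields roughly $T^{n-1}(\log B)^{O(1)}$, which for the tight case $n=2$ is $U^{9/10}(\log B)^{O(1)}/B^{4/5}$ --- a factor $B^{4/5}$ larger than the target $U^{9/10}(\log B)^{C_5}/B^{8/5}$. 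Your suggested fix (``discard the $t$'s that are $0$ in some coordinate'') does not address this, because the problematic short vectors need not be axis-aligned. The ``$+1$'' terms you flag as the main obstacle are actually harmless: the primed sum excludes $\Bt = 0$, and is empty when $T<1$, so that term disappears. To repair your route one would need exactly the arithmetic observation the paper uses: a short vector $\Bt_0 \in \Lambda$ for $\Lambda \in \mathfrak{L}(u_2)$ satisfies $u_2 \mid F_2(t_2^{(0)},-t_1^{(0)})$, so for fixed $\Bt_0$ only divisor-many $u_2$'s are admissible --- at which point you have reproduced the order-switching argument and Lemma \ref{lattice lemma} is no longer doing any work. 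So the two approaches are not two correct paths to the same result; yours as stated does not close, and the missing ingredient is the divisor-switch rather than any sharper geometry-of-numbers count.
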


\begin{proof} 
Let us write $F_s(x_1, x_s)$ for the product
\[F_s(x_1, x_s) = \prod_{j=1}^d (\psi_1^{(j)} x_1 + \psi_s^{(j)} x_s),\]
where $\psi_s^{(j)}$ are as in (\ref{C factor}). Note that each $F_s$ has integer coefficients. Moreover, since $F$ is irreducible over $\bQ$ it follows that each $F_s$ is a perfect power of a binary form with integer coefficients. Further, $F_s$ is not identically zero for $s = 2,\cdots,n$. If we fix a vector $(t_1, \cdots, t_n) \in \bZ^n$, then there are only at most $\sigma_0(F_2(t_2, -t_1))$ many $u_2$ such that $(t_1, \cdots, t_n) \in \Lambda$ for some $\Lambda \in \mathfrak{L}(u_2)$. To see this, if $(t_1, \cdots, t_n) \in \Lambda$ for $\Lambda \in \mathfrak{L}(u_2)$, then for each prime $p | u_2$, we have $(t_1, \cdots, t_n) \equiv \lambda_p (1, \upsilon_2^{(s)}, \cdots, \upsilon_n^{(s)}) \pmod{p}$ for some $\lambda_p \in \bF_p$ and $1 \leq s \leq \tau_F(p)$. Then it follows that $t_2 \equiv t_1 \upsilon_2^{(s)} \pmod{p}$, hence it follows that 
\[F_2(t_2, -t_1) \equiv 0 \pmod{p}.\]
This implies that $u_2 | F_2(t_2, -t_1)$, as claimed. Further, by the same argument we get that $u_2 | F_s(t_s, -t_1)$ for all $2 \leq s \leq n$. \\ \\
Now we can estimate $Q_U(B)$ when $U$ is suitably small as follows:
\begin{align*} Q_U(B) & \leq \frac{2^n}{U^n} \sum_{U/2 < u_2 \leq U} \sideset{}{'} \sum_{|t_1|, \cdots, |t_n| \leq U^{9/10}/B^{4/5}} S(t_1, \cdots, t_n; u_2) \\
& \leq \frac{2^n}{U} \sum_{U/2 < u_2 \leq U} d^{\omega(u_2)} \sideset{}{'} \sum_{\substack{|t_1|, \cdots, |t_n| \leq U^{9/10}/B^{4/5} \\ u_2 | \gcd(F_2(t_2, -t_1), \cdots, F_n(t_n, -t_1))}} 1.
\end{align*} 
Observe that when $t_1, t_2$ are fixed, then the condition $u_2 | F_j(t_j,-t_1)$ constrains each $t_j, j = 3, \cdots, n$ to at most $d^{\omega(u_2)}$ congruence classes modulo $u_2$, and for each congruence class, at most $(2U^{9/10}B^{-4/5})/u_2 + 1$ choices in the range $[-U^{9/10}/B^{4/5}, U^{9/10}/B^{4/5}]$. Since $U/2 < u_2 \leq U$, there is at most one choice when $B$ is sufficiently large. By the binomial theorem, for a number $A$ and a square-free positive integer $m$, we have
\[\sum_{r | m} A^{\omega(r)} = (A+1)^{\omega(m)}.\]
By permuting the variables if necessary, we may assume that $t_1 \ne 0$, at the cost of a factor of $n$. Hence
\begin{align} \label{QU bound} Q_U(B) & \leq \frac{n2^n}{U}  \sum_{\substack{|t_1|, |t_2| \leq U^{9/10}/B^{4/5}\\ t_1 \ne 0}} \sum_{u_2 | F_2(t_2, -t_1)} d^{(n-1)\omega(u_2)}\\ 
& = \frac{n2^n}{U} \sideset{}{'} \sum_{\substack{|t_1|, |t_2| \leq U^{9/10}/B^{4/5} \\ t_1 \ne 0}} (d^{n-1} + 1)^{\omega(F_2(t_2, -t_1))}, \notag 
\end{align}
so by Lemma 10.1 in \cite{Hoo1}, there exists a positive number $C_5$ such that
\[Q_U(B) = O\left(\frac{U^{9/10} (\log B)^{C_5}}{B^{8/5}}\right).\] 
\end{proof} 
If $U$ is relatively small, say $U < B^{5/3}$, then this is a satisfactory bound. Otherwise, we use Lemma 10.2 in \cite{Hoo1}, which we state as
\begin{lemma} \label{Hoo Lem 4} (Hooley, 2009) Set $\Xi(B) = B^{\frac{1}{6(\log \log B)^2}}$. Fix $u_2 \leq \xi_3$. Let $\omega^\dagger(m)$ denote the number of distinct prime factors of $m$ that exceed $\Xi$ and let 
\[l^\ast = \prod_{\substack{ p \leq \Xi \\ p | u_2 }} p\]
and
\[l^\dagger = \prod_{\substack{p > \Xi \\ p | u_2}} p.\]
Suppose that $l^\ast \leq B^{1/6}$. Then, for any positive constant $C_{6}$ and for $B^{1/2} < Y < B$, there exists a positive number $C_{7}$, depending only on $C_{6}$, such that
\[ \sideset{}{'}\sum_{\substack{(u_1, u_2) \equiv (t_1,t_2) \pmod{l^\ast} \\ |u_1, |u_2| \leq Y}} C_{6}^{\omega^\dagger(F(u_1, u_2))} = O\left(\frac{Y^2 (\log \log B)^{C_{7}}}{(l^\ast)^2} \right). \]
\end{lemma}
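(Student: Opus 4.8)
The plan is to recognise the summand as a non-negative multiplicative function of bounded size and to feed it, together with the binary form $F$, into a Shiu/Nair-type mean-value theorem for multiplicative functions along values of a binary form. Put $g(m)=C_6^{\omega^\dagger(m)}$; then $g$ is multiplicative with $g(p^a)=C_6$ for every prime power with $p>\Xi$ and $g(p^a)=1$ when $p\le\Xi$, so in particular $g(p^a)\le C_6$ for all prime powers, which places $g$ in the standard class covered by such theorems. To incorporate the congruence condition, apply the affine substitution $u_i=l^\ast v_i+t_i$, turning the sum into $\sum_{|v_1|,|v_2|\ll Y/l^\ast} g\!\bigl(|\widetilde F(v_1,v_2)|\bigr)$ over a full box of side $\asymp Y/l^\ast$, where $\widetilde F(v_1,v_2)=F(l^\ast v_1+t_1,\,l^\ast v_2+t_2)\in\bZ[v_1,v_2]$; since $l^\ast\le B^{1/6}$ and $Y>B^{1/2}$ we have $Y/l^\ast\ge B^{1/3}\to\infty$, so we are in the asymptotic regime, and for $p\nmid l^\ast$ the reduction of $\widetilde F$ modulo $p$ differs from that of $F$ only by a linear automorphism of $\bF_p^2$. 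The multidimensional Nair--Tenenbaum estimate (this is exactly the content of Lemma~10.2 of \cite{Hoo1}) then yields a bound of the shape
\[
\sum_{|v_1|,|v_2|\ll Y/l^\ast} g\!\bigl(|\widetilde F(v_1,v_2)|\bigr)\;\ll_F\;\Bigl(\tfrac{Y}{l^\ast}\Bigr)^{2}\prod_{p\le Y}\Bigl(1+\tfrac{(g(p)-1)\,\nu_F(p)}{p}\Bigr),
\]
where $\nu_F(p)\le\deg F$ is the number of zeros of $F$ in $\bP^1(\bF_p)$; the $O_F(1)$ primes dividing the discriminant of $F$ contribute bounded local factors, and the $O(Y)$ pairs lying on the $O_F(1)$ rational lines on which $F$ vanishes are handled trivially (this is harmless since $Y>B^{1/2}$ and $l^\ast\le B^{1/6}$ force $Y\ll Y^2(l^\ast)^{-2}B^{-1/6}$, which dominates even a trivial weight of size $C_6^{O((\log\log B)^2)}$).

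It then remains to evaluate the Euler product, and this is the step where the saving $(\log\log B)^{C_7}$ — rather than a mere $(\log B)^{C_7}$ — is produced. Since $g(p)=1$ for $p\le\Xi$ and $g(p)\le C_6$ for $p>\Xi$, and $\nu_F(p)\le\deg F$,
\[
\prod_{p\le Y}\Bigl(1+\tfrac{(g(p)-1)\nu_F(p)}{p}\Bigr)\le\exp\!\Bigl((C_6-1)\deg F\sum_{\Xi<p\le Y}\tfrac1p\Bigr).
\]
By Mertens' theorem and $Y<B$ we have $\sum_{\Xi<p\le Y}\tfrac1p=\log\log Y-\log\log\Xi+O(1/\log\Xi)\le\log\log B-\log\log\Xi+o(1)$, and since $\log\log\Xi=\log\log B-2\log\log\log B-\log 6$, this is $\le 2\log\log\log B+O(1)$. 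Hence the product is $\ll(\log\log B)^{2(C_6-1)\deg F}$, and taking $C_7=2(C_6-1)\deg F+1$ (which depends only on $C_6$, as $F$ is fixed) gives
\[
\sideset{}{'}\sum_{\substack{(u_1,u_2)\equiv(t_1,t_2)\pmod{l^\ast}\\ |u_1|,|u_2|\le Y}}C_6^{\omega^\dagger(F(u_1,u_2))}\;\ll\;\frac{Y^2(\log\log B)^{C_7}}{(l^\ast)^2},
\]
as claimed.

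The main obstacle is the mean-value input itself rather than the bookkeeping above. A direct attack — expand $g(|F(u_1,u_2)|)=\sum_{d\mid F(u_1,u_2),\,d\text{ sq-free},\,p\mid d\Rightarrow p>\Xi}(C_6-1)^{\omega(d)}$, interchange summations, and bound $\#\{(u_1,u_2):d\mid F(u_1,u_2)\}$ for each $d$ by a main term plus a per-modulus lattice-point error (using that the solution set lies on $O_F((\deg F)^{\omega(d)})$ sublattices of determinant $\ge d$) — does not close, because the relevant moduli $d$ range up to a fixed power of $Y$, the sublattices may contain short vectors, and the accumulated per-modulus errors overwhelm the main term once $l^\ast$ is as large as $B^{1/6}$. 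Getting past this is precisely where Nair's divisor-switching device is needed (trading $d\mid F$ for a congruence to a much smaller modulus at the cost of a bounded combinatorial weight), together with a Brun--Titchmarsh/fundamental-lemma estimate for the shortened sum; since Hooley carried this out in full in \cite{Hoo1}, one may alternatively simply invoke his Lemma~10.2. In either route, the decisive point is that every prime counted by $\omega^\dagger$ exceeds $\Xi=B^{1/6(\log\log B)^2}$, so that $\log\log\Xi$ and $\log\log B$ differ by only $O(\log\log\log B)$, which is exactly what makes the Euler-product bound above polynomial in $\log\log B$.
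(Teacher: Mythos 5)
The paper does not prove this lemma; it is quoted verbatim from Hooley's Lemma~10.2 in \cite{Hoo1}, so there is no proof of the paper's own to compare against. Your reconstruction correctly identifies the architecture of Hooley's argument: absorb the congruence by the affine substitution $u_i=l^\ast v_i+t_i$, view $g(m)=C_6^{\omega^\dagger(m)}$ as a multiplicative function bounded by $C_6$ on prime powers, apply a Nair/Shiu-type mean-value theorem for multiplicative functions along values of a binary form over a box of side $\asymp Y/l^\ast$, and read off the answer from the Euler product. The Euler-product evaluation is the point of the lemma, producing $(\log\log B)^{C_7}$ rather than $(\log B)^{C_7}$, and you carry it out correctly: since $g(p)=1$ for $p\le\Xi$, the Mertens sum collapses to $\sum_{\Xi<p\le Y}1/p$, and since $\log\log\Xi=\log\log B-2\log_3 B-\log 6$ one gets $\sum_{\Xi<p\le Y}1/p=O(\log_3 B)$, hence a factor of size $(\log\log B)^{O_{C_6,\deg F}(1)}$. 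You are also right that $C_7$ may be taken to depend only on $C_6$ because $F$, hence $\deg F$, is fixed.

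What the proposal does not supply is the analytic core, the mean-value estimate
\[
\sum_{|v_1|,|v_2|\ll Y/l^\ast} g\bigl(|\widetilde F(v_1,v_2)|\bigr)\;\ll_F\;\Bigl(\frac{Y}{l^\ast}\Bigr)^{2}\prod_{p\le Y}\Bigl(1+\frac{(g(p)-1)\nu_F(p)}{p}\Bigr),
\]
with $\nu_F(p)$ the number of projective zeros of $F$ over $\bF_p$, and you flag this candidly. Your proposed fallback, to ``simply invoke his Lemma~10.2,'' is circular, since that is exactly the statement being proved; the honest position is that the display is a known theorem of Nair--Tenenbaum type (with the $\Xi$-rough restriction on $g$ rendering the local factors at small primes, in particular those dividing $l^\ast$, trivial, so the passage from $F$ to $\widetilde F$ is harmless), and once one grants it your remaining bookkeeping closes the argument. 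So: structurally correct, the $(\log\log B)$-power computation is right, but the main input is cited rather than proved -- which is also exactly how the paper treats it.
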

When $U > B^{5/3}$ we employ the divisors $l^\ast, l^\dagger$ of $u_2$ as in Lemma \ref{Hoo Lem 4}. Suppose firstly that $l^\ast > B^{1/6}$. This means that
\[B^{1/6} < \Xi^{\omega(l^\ast)} \leq \Xi^{\omega(u_2)},\]
which shows that
\[\omega(u_2) > (\log_2 B)^2.\]
Hence, either $\omega(u_2) > (\log_2 B)^2$ or $l^\ast \leq B^{1/6}$. Put
\begin{equation} \label{QU1} Q_U^{(1)}(B) = \sum_{\substack{U/2 < u_2 \leq U \\ \omega(u_2) > (\log_2 B)^2}} \frac{1}{u_2^n} \sideset{}{'} \sum_{|t_1|, \cdots, |t_n| \leq u_2^{9/10}/B^{4/5}} S(t_1, \cdots, t_n; u_2)  \end{equation}
and
\begin{equation} \label{QU2} Q_U^{(2)}(B) = \sum_{\substack{U/2 < u_2 \leq U \\ l^\ast \leq B^{1/6}}} \frac{1}{u_2^n} \sideset{}{'} \sum_{|t_1|, \cdots, |t_n| \leq u_2^{9/10}/B^{4/5}} S(t_1, \cdots, t_n; u_2).  \end{equation}

We have the following estimates for $Q_U^{(1)}(B)$ and $Q_U^{(2)}(B)$:

\begin{lemma} \label{final lemma} Let $Q_U^{(1)}(B), Q_U^{(2)}(B)$ be as in (\ref{QU1}) and (\ref{QU2}) respectively. Then there exists a positive number $C_6$ depending only on $d,n$ such that
\[Q_U^{(1)}(B) = O_d\left( \frac{U^{4/5} (\log B)^{C_{6}}}{B^{8/5} (\log B)^{\log_2 B}} \right) \]
and
\[Q_U^{(2)}(B) = O_d\left(\frac{ U^{4/5} \log B (\log_2 B)^{C_{7}}}{ B^{8/5}}\right). \] 
\end{lemma}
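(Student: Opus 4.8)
The plan is to bound $Q_U^{(1)}(B)$ and $Q_U^{(2)}(B)$ separately, in both cases starting from the same elementary observation used in Lemma \ref{QB lem 1}: if $S(t_1, \dots, t_n; u_2) \neq 0$ then $u_2 \mid \gcd(F_2(t_2, -t_1), \dots, F_n(t_n, -t_1))$ and $S(t_1,\dots,t_n;u_2) \le (\tau_F(u_2))^? \cdot u_2^{n-1} \ll d^{\omega(u_2)} u_2^{n-1}$, so that after dividing by $u_2^n$ and summing over the dyadic block $(U/2, U]$ we get
\[
Q_U^{(i)}(B) \ll \frac{1}{U} \sum_{\substack{U/2 < u_2 \le U \\ (\star_i)}} d^{\omega(u_2)} \,\sideset{}{'}\sum_{\substack{|t_1|, \dots, |t_n| \le U^{9/10}/B^{4/5} \\ u_2 \mid \gcd(F_2(t_2,-t_1), \dots, F_n(t_n,-t_1))}} 1,
\]
where $(\star_1)$ is the constraint $\omega(u_2) > (\log_2 B)^2$ and $(\star_2)$ is $l^\ast \le B^{1/6}$. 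As before, fixing $t_1 \neq 0$ (permuting at the cost of a factor $n$) and $t_2$, each of $t_3, \dots, t_n$ is restricted to $\le d^{\omega(u_2)}$ residue classes mod $u_2$, which (since $u_2 > U/2$ is large compared to the box side $U^{9/10}/B^{4/5}$ once $B$ is large) forces $t_3 = \dots = t_n = 0$; so the whole inner structure collapses to summing over $|t_1|, |t_2| \le U^{9/10}/B^{4/5}$ and over $u_2 \mid F_2(t_2, -t_1)$, producing $(d^{n-1}+1)^{\omega(F_2(t_2,-t_1))}$ after the binomial identity $\sum_{r\mid m} A^{\omega(r)} = (A+1)^{\omega(m)}$, exactly as in (\ref{QU bound}).

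For $Q_U^{(1)}(B)$, the extra saving comes from the condition $\omega(u_2) > (\log_2 B)^2$. Since $u_2 \mid F_2(t_2, -t_1)$ and $|F_2(t_2,-t_1)| \ll (U^{9/10}/B^{4/5})^d \le B^{O(1)}$, any such $u_2$ is a divisor of $F_2(t_2,-t_1)$ with an unusually large number of prime factors; I would insert a factor like $(3/2)^{\omega(u_2) - (\log_2 B)^2} \le 1$, i.e. write $1 \le (3/2)^{\omega(u_2)} (3/2)^{-(\log_2 B)^2} = (3/2)^{\omega(u_2)} (\log B)^{-\log_2 B \cdot \log(3/2)/\log\log\log B}$ — more cleanly, bound the count of $u_2 \le U$ with $\omega(u_2) > (\log_2 B)^2$ by $\sum_{\omega(u_2) > (\log_2 B)^2} 1 \le T^{-(\log_2 B)^2} \sum_{u_2 \mid F_2(t_2,-t_1)} T^{\omega(u_2)}$ for a suitable constant $T > 1$ chosen so that $T^{(\log_2 B)^2}$ beats a power of $\log B$ of the right shape, giving the $(\log B)^{-\log_2 B}$ in the statement. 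Then apply Lemma 10.1 of \cite{Hoo1} (as in the proof of Lemma \ref{QB lem 1}) to the remaining sum $\sideset{}{'}\sum_{|t_1|,|t_2| \le U^{9/10}/B^{4/5}} (C')^{\omega(F_2(t_2,-t_1))} = O((U^{9/10}/B^{4/5})^2 (\log B)^{C})$, and absorb the $1/U$ and the $U^{9/10}$'s into $U^{4/5}/B^{8/5}$ using $U^{9/10}\cdot U^{9/10}/U = U^{4/5}$. This yields $Q_U^{(1)}(B) = O_d(U^{4/5}(\log B)^{C_6}/(B^{8/5}(\log B)^{\log_2 B}))$.

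For $Q_U^{(2)}(B)$, the condition is $l^\ast = \prod_{p \le \Xi, \, p \mid u_2} p \le B^{1/6}$, which is exactly the hypothesis of Hooley's Lemma \ref{Hoo Lem 4} (with $\Xi(B) = B^{1/(6(\log\log B)^2)}$). Here I keep the factor $d^{\omega(u_2)}$ but split it multiplicatively as $d^{\omega(l^\ast)} d^{\omega^\dagger(u_2)}$; the small-prime part $d^{\omega(l^\ast)}$ is at most $\sigma_0(l^\ast)^{\log d / \log 2}$ and, since $l^\ast \le B^{1/6}$, contributes only a power of $\log B$ (or better, a $(\log_2 B)^{C}$), while the large-prime part is handled by summing $C_6^{\omega^\dagger(F_2(t_2,-t_1))}$ over $|t_1|, |t_2| \le U^{9/10}/B^{4/5}$ along each residue class mod $l^\ast$, which is precisely the sum estimated in Lemma \ref{Hoo Lem 4} with $Y = U^{9/10}/B^{4/5}$ (one checks $B^{1/2} < Y < B$ in the relevant range $U > B^{5/3}$, and handles the residue-class bookkeeping from $u_2 = l^\ast l^\dagger$). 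This gives a bound $O(Y^2 (\log\log B)^{C_7}/(l^\ast)^2)$ per residue class; summing over the $\le (l^\ast)^2$-ish structure and over dyadic $u_2$, then inserting the $1/U$ and the small-prime $\log$ losses, produces $Q_U^{(2)}(B) = O_d(U^{4/5} \log B (\log_2 B)^{C_7}/B^{8/5})$. The main obstacle I anticipate is the careful bookkeeping in the $Q_U^{(2)}$ case: one must correctly decouple the divisor condition $u_2 \mid F_2(t_2,-t_1)$ into the $l^\ast$ and $l^\dagger$ parts, verify that $Y = U^{9/10}/B^{4/5}$ lands in the admissible window for Hooley's lemma precisely when $U > B^{5/3}$, and track that the accumulated powers of $d^{\omega(\cdot)}$ and $\sigma_0$ over small primes stay within the claimed $(\log_2 B)^{C_7}$ — the analytic input is entirely Hooley's, so no new estimates are needed, only correct assembly.
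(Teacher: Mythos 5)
Your proposal follows essentially the same route as the paper: the reduction to a two-variable sum over $(t_1,t_2)$ with the divisor condition $u_2 \mid F_2(t_2,-t_1)$, the Rankin-type insertion of $T^{\omega(u_2)-(\log_2 B)^2}$ (the paper uses $T=e$, noting $e^{(\log_2 B)^2}=(\log B)^{\log_2 B}$) followed by Hooley's Lemma 10.1 for $Q_U^{(1)}$, and the $l^\ast/l^\dagger$ decoupling with Hooley's Lemma \ref{Hoo Lem 4} applied in residue classes modulo $l^\ast$ (with $Y=U^{9/10}/B^{4/5}\in(B^{7/10},B)$) for $Q_U^{(2)}$. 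The only part you leave schematic is the final summation over $l^\ast$, which the paper closes via $\sum_{b}S(b;l^\ast)=\tau_F(l^\ast)(l^\ast)^n$ and $\sum_{l^\ast\le B^{1/6}}\tau_F(l^\ast)/l^\ast=O(\log B)$, accounting for the $\log B$ factor in the stated bound; this is consistent with your outline.
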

\begin{proof}
To estimate $Q_U^{(1)}(B)$, by (\ref{QU bound}) we have
\begin{align*} Q_U^{(1)}(B) & \leq \frac{n2^{n}}{U} \sum_{\substack{u_2 \leq U \\ \omega(u_2) > (\log \log B)^2}} \sideset{}{'} \sum_{|t_1|, |t_2| \leq u_2^{9/10}/B^{4/5}} d^{(n-1)\omega(F(t_2, -t_1))}  \\
& \ll_n \frac{1}{U} \sideset{}{'} \sum_{|t_1|, |t_2| \leq U^{9/10} / B^{4/5}} \sum_{\substack{u_2 | F_2(t_2, -t_1) \\  \omega(u_2) > (\log_2 B)^2}} d^{(n-1)\omega(u_2)}. \end{align*}
Observe that since $u_2 | F_2(t_2, -t_1),$ we have
\[d^{(n-1)\omega(u_2)} = \frac{d^{(n-1)\omega(u_2)} e^{(\log_2 B)^2}}{(\log B)^{\log_2 B}} < \frac{(3d^{n-1})^{\omega(u_2)}}{(\log B)^{\log_2 B}}.\]
By the binomial theorem and the fact that $u_2$ is square-free, it follows that
\[\sum_{\substack{u_2 | F_2(t_2, -t_1) \\ \omega(u_2) > (\log_2 B)^2}} d^{(n-1)\omega(u_2)} \leq \sum_{\substack{u_2 | F_2(t_2, -t_1) \\ \omega(u_2) > (\log_2 B)^2}} \frac{(3d^{n-1})^{\omega(u_2)}}{(\log B)^{\log_2 B}} = \frac{(3d^{n-1}+1)^{\omega(F_2(t_2,-t_1))}}{(\log B)^{\log_2 B}}.  \]
Hence, we see that for some positive $C_{6}$
\begin{align} \label{Q-nu-1}
Q_U^{(1)}(B) & \ll_n \frac{1}{U(\log B)^{\log_2 B}} \sideset{}{'} \sum_{|t_1|, |t_2| \leq U^{9/10}/B^{4/5}} (3d^{n-1} + 1)^{\omega(F_2(t_2, -t_1))} \\
& = O\left( \frac{U^{4/5} (\log B)^{C_{6}}}{B^{8/5} (\log B)^{\log_2 B}} \right) \notag \end{align}
by Lemma 10.1 in \cite{Hoo1} again. This completes the estimation of $Q_U^{(1)}(B)$. \\ 

Observe that 
\[\frac{U^{4/5}}{B^{8/5}} = O(\xi_3^{4/5} B^{-8/5}) = O((\log B)^{-8k/15}), \]
and thus the desired conclusion for $Q_U^{(1)}(B)$ holds. \\

The sum $Q_U^{(2)}(B)$ is more difficult. The key tool will be Lemma \ref{Hoo Lem 4}. Recall that $Q_U^{(2)}(B)$ consists of the contribution from those tuples for which $l^\ast \leq B^{1/6}$ and $U > B^{5/3}$. By the multiplicativity of $S(t_1, \cdots, t_n, \cdot)$, it follows that
\begin{equation} \label{Q-nu-2}Q_U^{(2)}(B)  \leq \frac{2^n}{U^n} \sideset{}{'}\sum_{|t_1|, \cdots, |t_n| \leq U^{9/10}/B^{4/5}} \sum_{\substack{l^\ast l^\dagger  \leq U \\ l^\ast \leq B^{1/6}}} S(t_1, \cdots,  t_n; l^\ast) S(t_1, \cdots, t_n; l^\dagger). \end{equation}
We rearrange the summation to obtain
\begin{equation} \label{QU22} \frac{2^n}{U^n} \sum_{\substack{l^\ast l^\dagger \leq U \\ l^\ast \leq B^{1/6}}} \sum_{b_1, \cdots, b_n \pmod{l^\ast}} S(b_1, \cdots, b_n; l^\ast) \sideset{}{'} \sum_{\substack{|t_1|, \cdots, |t_n| \leq U^{9/10}/B^{4/5} \\ t_i \equiv b_i \pmod{l^\ast}}} S(t_1, \cdots, t_n; l^\dagger).
\end{equation}
We estimate $S(t_1, \cdots, t_n; l^\dagger)$ by $d^{\omega(l^\dagger)} (l^\dagger)^{n-1}$ when it is non-zero. Next we observe that from the proof of Lemma \ref{QB lem 1} that $S(t_1, \cdots, t_n; l^\dagger)$ is non-zero only if $l^\dagger$ divides $F_s(t_s, -t_1)$ for $s = 2, \cdots, n$. Since $U > B^{5/3}$ and $l^\ast \leq B^{1/6}$, it follows that $l^\dagger > B^{3/2}$. Therefore $(U^{9/10} B^{-4/5})/l^\dagger \ll B^{-1/2} (\log B)^{-2k/3}$. In other words, for sufficiently large $B$ and for fixed $t_1, t_2$, the congruence condition imposed by $l^\dagger$ leads to at most $d^{\omega(l^\dagger)}$ choices for $t_3, \cdots, t_n$ as before. It then follows that
\begin{align*} Q_U^{(2)}(B)  & \leq \frac{n2^n}{U^n}  \sum_{\substack{l^\ast \leq B^{1/6} \\ b_1, \cdots, b_n \pmod{l^\ast}}} S(b_1, \cdots, b_n; l^\ast) \sideset{}{'}\sum_{\substack{|t_1|, |t_2| \leq U^{9/10}/B^{4/5} \\ t_i \equiv b_i \pmod{l^\ast}}} \sum_{l^\dagger | F_2(t_2,-t_1)} d^{\omega(l^\dagger)} (l^\dagger)^{n-1}  \\
& \leq \frac{n2^n}{U} \sum_{\substack{l^\ast \leq B^{1/6} \\ b_1, \cdots, b_n \pmod{l^\ast}}} \frac{S(t_1, \cdots, t_n; l^\ast)}{(l^\ast)^{n-1}}  \sideset{}{'}\sum_{\substack{|t_1|, |t_2| \leq U^{9/10}/B^{4/5} \\ t_i \equiv b_i \pmod{l^\ast}}}  \sum_{l^\dagger | F_2(t_2,-t_1)} d^{\omega(l^\dagger)} \\
& \ll \frac{n2^n}{U} \sum_{\substack{l^\ast \leq B^{1/6} \\ b_1, \cdots, b_n \pmod{l^\ast} }} \frac{S(b_1, \cdots, b_n; l^\ast)}{(l^\ast)^{n-1}} \sideset{}{'}\sum_{\substack{|t_1|, |t_2| \leq U^{9/10}/B^{4/5} \\ t_i \equiv b_i \pmod{l^\ast}}}  (d^{n-1} +1)^{\omega^\dagger(F_2(t_2,-t_1))} \\
& \ll \frac{n2^n}{U} \sum_{\substack{l^\ast \leq B^{1/6} \\ b_1, \cdots, b_n \pmod{l^\ast} }} \frac{S(b_1, \cdots, b_n; l^\ast)}{(l^\ast)^{n-1}}   \sideset{}{'} \sum_{\substack{|t_1|, |t_2| \leq U^{9/10}/B^{4/5} \\ t_1 \equiv b_1  \pmod{l^\ast} \\ t_2 \equiv b_2 \pmod{l^\ast}}} (d^{n-1}+1)^{\omega^\dagger(F(t_2,-t_1))}.
\end{align*} 
Note that $U < \xi_3 = C_1 B^2 (\log B)^{-2k/3}$, whence $U^{9/10}/B^{4/5} < B$. Further our assumption of $U > B^{5/3}$ shows that $U^{9/10}/B^{4/5} > B^{7/10}$. Hence, the innermost sum is treatable by Lemma \ref{Hoo Lem 4}. We then have
\[Q_U^{(2)}(B) = O_d\left(\frac{U^{4/5} (\log_2 B)^{C_{7}}}{B^{8/5} } \sum_{l^\ast \leq B^{1/6}} \frac{1}{(l^\ast)^{n+1}} \sum_{b_1, \cdots, b_n \pmod{l^\ast}} S(b_1, \cdots, b_n; l^\ast) \right).\]
By the proof of Lemma \ref{S lemma}, we se that for each prime $p$ we have
\[\sum_{b_1, \cdots, b_n \pmod{p}} S(b_1, \cdots, b_n; p) = p \cdot \tau_F(p) p^{n-1} = \tau_F(p) p^n.\]
It thus follows from multiplicativity that for any squarefree $l$ we have
\[\sum_{b_1, \cdots, b_n \pmod{l}} S(b_1, \cdots, b_n; l) = \tau_F(l) l^n.\]
We then deduce that
\[\sum_{l^\ast \leq B^{1/6}} \frac{1}{(l^\ast)^{n+1}} \sum_{b_1, \cdots, b_n \pmod{l^\ast}} S(b_1, \cdots, b_n; l^\ast) \leq \frac{\tau_F(l^\ast)}{l^\ast}.\]
By Lemma 6.1 in \cite{Hoo1}, we then see that
\begin{align*} Q_U^{(2)}(B) & = O_d\left(\frac{ U^{4/5} (\log_2 B)^{C_{7}}}{ B^{8/5}} \sum_{l^\ast \leq B^{1/6}} \frac{\tau_F(l^\ast)}{l^\ast} \right) \\ 
& = O_d\left(\frac{ U^{4/5} (\log_2 B)^{C_{7}}}{ B^{8/5}} \prod_{p \leq B^{1/6}} \left(1 + \frac{\tau_F(p)}{p} \right) \right) \\
& = O_d\left(\frac{ U^{4/5} \log B (\log_2 B)^{C_{7}}}{ B^{8/5}}\right),
\end{align*}
as desired. \end{proof}
By summing over $Q_U(B), Q_U^{(1)}(B), Q_U^{(2)}(B)$ over dyadic ranges of $U$ up to $\xi_2$, we then see that
\begin{equation} \label{QUB} \sum_{1 \leq k \ll \log B} O_d \left(\frac{(B^{5/3}/2^k)^{9/10} (\log B)^{C_5}}{B^{8/5}} \right) = O \left(\frac{(\log B)^{C_5}}{B^{1/10}} \right), \end{equation}

\begin{equation} \label{QUB1} \sum_{1 \leq k \ll \log B} O_d \left(\frac{(\xi_3/2^k)^{4/5} (\log B)^{C_6}}{B^{8/5} (\log B)^{\log_2 B}} \right) = O \left((\log B)^{C_6 - 8k/15 - \log_2 B} \right),\end{equation}
and
\begin{equation} \label{QUB2} \sum_{1 \leq k \ll \log B} O_d \left(\frac{(\xi_3/2^k)^{4/5} \log B (\log_2 B)^{C_7}}{B^{8/5}} \right) = O_d \left(\frac{(\log_2 B)^{C_7}}{(\log B)^{(2k-3)/3}} \right).
\end{equation}
This shows that
\begin{equation} \label{QB bound} Q(B) = O_F \left(\frac{(\log_2 B)^{C_7}}{(\log B)^{(2k-3)/3}}\right) = o(1),\end{equation}
and by (\ref{MLL and Q}), (\ref{N5 def}), Lemma \ref{eke consequence} and Lemma \ref{N3 smooth bound} we see that
\[N_3(B) = o(B^n),\]
and this completes the proof of Theorem \ref{MT1}.

\newpage


\begin{thebibliography}{10}

\bibitem{Bha}
M.~Bhargava, \emph{The geometric sieve and the density of squarefree values of invariant polynomials}, arXiv, \url{http://arxiv.org/abs/1402.0031}, retrieved 08 Jul 2014.

\bibitem{BhaSha}
M.~Bhargava, A.~Shankar, \emph{Binary quartic forms having bounded
invariants, and the boundedness of the average rank of elliptic curves}, Annals of Mathematics \textbf{181} (2015), 191-242.

\bibitem{BSW}
M.~Bhargava, A.~Shankar, X.~Wang, \emph{Squarefree values of polynomial discriminants I}, arXiv:1611.09806 [math.NT] \url{https://arxiv.org/abs/1611.09806}

\bibitem{B1}
T.~D.~Browning, \emph{Quantitative Arithmetic on Projective Varieties}, Birkhauser, 2009.

\bibitem{B2}
T.~D.~Browning, \emph{Power-free values of polynomials}, Arch.~Math. (2) \textbf{96} (2011), 139-150. 

\bibitem{CLO}
D.~Cox, J.~Little, D.~O'Shea, \emph{Ideals, Varieties, and Algorithms - An Introduction to Computational Algebraic Geometry and Commutative Algebra},  Springer-Verlag, New York. 

\bibitem{Eke}
T.~Ekedahl, \emph{An infinite version of the Chinese remainder theorem}, Comment.~Math.~Univ.~St.~Paul. \textbf{40} (1991), 53-59. 

\bibitem{E}
P.~Erd\H{o}s, \emph{Arithmetical properties of polynomials}, J. London Math. Soc. \textbf{28} (1953), 416-425.

\bibitem{EM}
P.~Erd\H{o}s, K.~Mahler, \emph{On the number of integers which can be represented by a binary form}, J. London Math. Soc. \textbf{13} (1938), 134-139.

\bibitem{Est}
T.~Estermann, \emph{Einige Satze uber quadratfeie Zahlen}, Math. Ann., \textbf{105} (1931), 653-662.

\bibitem{Ever}
J-H.~Evertse, \emph{The number of solutions of decomposable form equations}, Inventiones Mathematicae \textbf{122} (1995), 559-601.

\bibitem{Fil}
M.~Filaseta, \emph{Powerfree values of binary forms}, Journal of Number Theory \textbf{49} (1994), 250-268.

\bibitem{GM}
F.~Gouvea, B.~Mazur, \emph{The square-free sieve and the rank of elliptic curves}, Journal of the American Mathematical Society, \textbf{4} (1991), 1-23.

\bibitem{Gran}
A.~Granville, \emph{$ABC$ allows us to count squarefrees}, International Mathematics Research Notices, \textbf{9} (1998).

\bibitem{Gre}
G.~Greaves, \emph{Power-free values of binary forms}, Q.~J.~Math, (2) \textbf{43} (1992), 45-65.

\bibitem{HW}
G.H.~Hardy, E.~M.~Wright, \emph{An Introduction to the Theory of Numbers}, 5th edition, Oxford University Press, London and New York, 1979. 

\bibitem{HR}
H.~Halberstam, K.~F.~Roth, \emph{On the gaps between consecutive $k$-free integers}, J.~London Math.~Soc., (2) \textbf{26} (1951), 268-273.

\bibitem{HB0}
D.~R.~Heath-Brown, \emph{Diophantine approximation with square free numbers}, Math.~Z., \textbf{187} (1984), 335-344. 

\bibitem{HB1}
D.~R.~Heath-Brown, \emph{The density of rational points on curves and surfaces}, The Annals of Mathematics (2) \textbf{155} (2002), 553-598.

\bibitem{HB2}
D.~R.~Heath-Brown, \emph{Counting rational points on algebraic varieties}, Analytic number theory, 51–95, Lecture Notes in Math., 1891, Springer, Berlin, 2006. 

\bibitem{HB4}
D.~R.~Heath-Brown, \emph{Powerfree values of polynomials}, Q.~J.~Math, (2) \textbf{64} (2013), 177-188.

\bibitem{Hel}
H.~A.~Helfgott, \emph{Power-free values, large deviations, and integer points on irrational curves}, Journal de Theorie des Nombres de Bordeaux, \textbf{19} (2007), 433-472.

\bibitem{Hel2}
H.~A.~Helfgott, \emph{Square-free values of $f(p)$, $f$ cubic}, Acta Math., \textbf{213} (2014), 107-135.

\bibitem{Hoo}
C.~Hooley, \emph{On the power free values of polynomials}, Mathematika \textbf{14} (1967), 21-26.

\bibitem{Hoo1}
C.~Hooley, \emph{On the power-free values of polynomials in two variables}, Analytic Number Theory: Essays in Honour of Klaus Roth (2009). 

\bibitem{Hoo2}
C.~Hooley, \emph{On the power-free values of polynomials in two variables II}, Journal of Number Theory, \textbf{129} (2009), 1443-1455.

\bibitem{HN}
M.~N.~Huxley, M.~Nair, \emph{Power free values of polynomials III}, Proc.~London Math.~Soc. \textbf{41} (1980), 66-82.

\bibitem{LW}
S.~Lang, A.~Weil, \emph{Number of points of varieties over finite fields}, American Journal of Mathematics, (4) \textbf{76} (1954), 819-827.

\bibitem{May}
J.~Maynard, \emph{Primes represented by incomplete norm forms}, arXiv:1507.05080 [math.NT], \url{http://arxiv.org/abs/1507.05080}.

\bibitem{N1}
M.~Nair, \emph{Power free values of polynomials}, Mathematika, \textbf{23} (1976), 159-183.

\bibitem{N2}
M.~Nair, \emph{Power free values of polynomials II}, Proc.~London.~Math.~Soc.~ (3) \textbf{38} (1979), 353-368. 

\bibitem{Poo}
B.~Poonen, \emph{Squarefree values of multivariate polynomials}, Duke Math. J., (2) \textbf{118} (2003), 353-373.

\bibitem{Reus}
T.~Reuss, \emph{Power-free values of polynomials}, preprint. 

\bibitem{Ric}
G.~Ricci, \emph{Riecenche aritmetiche sui polynomials}, Rend. Circ. Mat. Palermo \textbf{57} (1933), 433-475.

\bibitem{RS}
J.~B.~Rosser, L.~Schoenfeld, \emph{Approximate formulas for some functions of prime numbers}, Illinois J.~Math., (1) \textbf{6} (1962), 64-94. 

\bibitem{Sch}
W.~M.~Schmidt, \emph{The number of solutions of norm form equations}, Transactions of the American Mathematical Society, (1) \textbf{317} (1990), 197-227.  

\bibitem{Stew}
C.~L.~Stewart, \emph{On the number of solutions of polynomial congruences and Thue equations}, Journal of the American Mathematical Society, (4) \textbf{4} (1991), 793-835.

\bibitem{ST}
C.~L.~Stewart, J.~Top, \emph{On the ranks of twists of elliptic curves and power-free values of binary forms}, Journal of the American Mathematical Society, (4) \textbf{8} (1995), 943-973.

\bibitem{Thue}
A.~Thue, \emph{Uber Annaherungswerte algebraischer Zahlen}, Journal fur die reine und angewandte Mathematik, \textbf{135} (1909), 284-305.

\bibitem{Vaa}
J.~D.~Vaaler, \emph{Some extremal functions in Fourier analysis}, Bulletin of the AMS, (2) \textbf{12} (1985), 183-216.

\bibitem{X}
S.~Y.~Xiao, \emph{Power-free values of binary forms and the global determinant method}, Int Math Res Notices (2016) doi: 10.1093/imrn/rnw165, First published online: July 27, 2016.




\end{thebibliography}
\end{document}